\documentclass[review,11pt]{article}

\usepackage[utf8]{inputenc}
\usepackage[english]{babel}
\usepackage{placeins}
\usepackage[a4paper, left=1in, right=1in, top=1in, bottom=1in]{geometry}

\usepackage{amsmath,amssymb}
\usepackage{amsthm}
\usepackage{graphicx}
\usepackage{multirow}
\usepackage{enumitem}
\usepackage{float}
\usepackage{color}
\usepackage{xcolor}
\usepackage{subcaption}
\usepackage{adjustbox}
\usepackage{cancel}
\usepackage{ulem}
\usepackage{mathtools}
\usepackage{comment}

\usepackage{aliascnt}
\usepackage{hyperref}
\usepackage{cleveref}

\usepackage{multicol}
\usepackage{booktabs}
\usepackage{titlesec}
\usepackage{algorithm}
\usepackage{algpseudocode}
\usepackage{authblk}
\usepackage{tikz}
\usepackage{stmaryrd}
\usepackage{mathpazo}
\usepackage{bm}
\usepackage{microtype}
\usepackage{setspace}
\usepackage{tcolorbox}
\usepackage{wrapfig}
\tcbuselibrary{breakable}
\usepackage{longtable}

\usepackage{fancyhdr}
\usetikzlibrary{arrows.meta,positioning,shapes.geometric}

\titleclass{\subsubsubsection}{straight}[\subsection]
\newcounter{subsubsubsection}
\titleformat{\subsubsubsection}
  {\normalfont\normalsize\bfseries}
  {\thesubsubsubsection}
  {1em}
  {}
\titlespacing*{\subsubsubsection}
  {0pt}
  {3.25ex plus 1ex minus .2ex}
  {1.5ex plus .2ex}

\renewcommand{\thesubsubsubsection}
  {\thesubsection.\arabic{subsubsection}.\arabic{subsubsubsection}}

\newcounter{AssDD}
\stepcounter{AssDD}

\newcounter{AssM}
\stepcounter{AssM}

\numberwithin{equation}{section}

\title{An adaptive, space-time discretized linear iterative scheme for doubly-degenerate parabolic problems}

\author[1]{A. Javed}
\author[2]{K. Mitra}
\author[1]{I.S. Pop}

\affil[1]{Hasselt University, Belgium}
\affil[2]{Eindhoven University of Technology, The Netherlands}

\date{\today}

\def \a  {\alpha}

\def \g  {\gamma}

\def \d  {\delta}

\def \e  {\varepsilon}

\def \f  {\varphi}

\def \vs  {\omega}

\def \l  {\lambda}
\def \om {\omega}
\def \oma {\omega_a}

\def \Om {\Omega}

\def \t  {\tau}

\def \p  {\partial}
\def \N  {{\mathbb{N}}}
\def \R  {{\mathbb{R}}}
\def \dd {\mathrm{d}}

\def \x {{\bm{x}}}
\def \htt {{h\tau}}

\def \calE {\mathcal{E}}

\def \calV {\mathcal{V}}
\def \calN {\mathcal{N}}
\def \calP {\mathcal{P}}
\def \calQ {\bm{\mathcal{Q}}}
\def \calR {\mathcal{R}}
\def \resH {{\calR}_{\mathrm{H}}}
\def \calS {\mathcal{S}}
\def \calT {\mathcal{T}}
\def \calW {\mathcal{W}}

\def \flx {\bm{\sigma}_{\htt}^i}
\def \flxa {\bm{\sigma}^a_{\htt}}

\def \nhat {\bm{n}_{\mathrm{x}}}
\def \Shti {\calS_\htt^i}
\def \Fhti {\bm{F}_\htt^i}

\def \Maxi {{\mathrm{M}}}

\newcommand{\norm}[1]{{
  \left\vert\kern-0.25ex\left\vert\kern-0.25ex\left\vert #1
  \right\vert\kern-0.25ex\right\vert\kern-0.25ex\right\vert
}}

\newcommand{\snorm}[1]{{
  \left[\kern-0.25ex\left[ #1
  \right]\kern-0.25ex\right]
}}

\newtheorem{lemma}{Lemma}[section]

\newaliascnt{theorem}{lemma}
\newtheorem{theorem}[theorem]{Theorem}
\aliascntresetthe{theorem}

\newaliascnt{corollary}{lemma}

\aliascntresetthe{corollary}

\newaliascnt{proposition}{lemma}
\newtheorem{proposition}[proposition]{Proposition}
\aliascntresetthe{proposition}

\newaliascnt{assumption}{lemma}

\aliascntresetthe{assumption}

\newaliascnt{definition}{lemma}
\newtheorem{definition}[definition]{Definition}
\aliascntresetthe{definition}

\newaliascnt{remark}{lemma}
\newtheorem{remark}[remark]{Remark}
\aliascntresetthe{remark}

\newaliascnt{example}{lemma}

\aliascntresetthe{example}

\newaliascnt{conjecture}{lemma}

\aliascntresetthe{conjecture}

\newtheorem{problem}{Problem}

\crefname{lemma}{Lemma}{Lemmas}
\Crefname{lemma}{Lemma}{Lemmas}

\crefname{theorem}{Theorem}{Theorems}
\Crefname{theorem}{Theorem}{Theorems}

\crefname{corollary}{Corollary}{Corollaries}
\Crefname{corollary}{Corollary}{Corollaries}

\crefname{proposition}{Proposition}{Propositions}
\Crefname{proposition}{Proposition}{Propositions}

\crefname{assumption}{Assumption}{Assumptions}
\Crefname{assumption}{Assumption}{Assumptions}

\crefname{definition}{Definition}{Definitions}
\Crefname{definition}{Definition}{Definitions}

\crefname{remark}{Remark}{Remarks}
\Crefname{remark}{Remark}{Remarks}

\crefname{example}{Example}{Examples}
\Crefname{example}{Example}{Examples}

\crefname{conjecture}{Conjecture}{Conjectures}
\Crefname{conjecture}{Conjecture}{Conjectures}

\crefname{problem}{Problem}{Problems}
\Crefname{problem}{Problem}{Problems}

\crefname{subsubsubsection}{section}{sections}
\Crefname{subsubsubsection}{Section}{Sections}

\newtcolorbox{mybox}[1]{
    breakable,
    colback=yellow!10!white,
    colframe=red!75!black,
    fonttitle=\bfseries,
    title={#1}
}

\newcounter{Ass}
\stepcounter{Ass}

\begin{document}

\maketitle
\begin{abstract}
Degenerate diffusion problems, where the governing parabolic equation can change type to either an ordinary differential equation or an elliptic equation, model many real life applications. Due to the presence of free-boundaries, accurate numerical simulation of such problems require extremely small mesh and time step sizes locally. To remediate this issue, in this work, we consider a space-time formulation of the problem based on an efficient splitting of the nonlinearities.  First, an iterative linearization scheme is proposed to resolve the nonlinearities that effectively reduces to solving a sequence of heat equations. Unconditional convergence of the scheme is proven even for double degenerate cases with linear convergence achieved if the problem is non-degenerate. Next,  the dual norm of the nonlinear residual is decomposed into a linearization error component and a discretization error component corresponding to the heat equation. This leads to reliable and fully computable a posteriori estimates for the problem that are robust with respect to the nonlinearities/degeneracies. These estimates are used then in a fully adaptive (discretization + linearization) space-time solver. Numerical experiments for multiple test cases (one and two dimensions in space) demonstrate that this solver efficiently allocates the computational resources in the space-time domain, resulting in a rapid decay of error in terms of total degrees of freedom spent.

\end{abstract}
\section{Introduction} \label{introduction}
Degenerate parabolic equations arise in a wide range of applications, such as flow through porous media \cite{vazquez2007porous}, mathematical biology \cite{van2002mathematical}, phase transition \cite{Malgo}, traffic flow \cite{berthelin2008model}, and other nonlinear diffusion processes. 
The solution to such models features free boundaries (where the equation becomes an ordinary differential equation(ODE)) with sharp and moving interfaces, causing the solution to have low space regularity, along with degenerate regions (where the equation becomes elliptic) with low time regularity. These two "degeneracies" make it challenging to numerically approximate the solutions. They also render uniform discretizations (particularly, in time) inefficient since the sharp interfaces often require very fine meshes.

Let $\Om$ be a bounded open domain in $\R^{d}$ having a Lipschitz boundary $\p\Om$. For some final time $T>0$, let $Q:=(0,T]\times\Omega$ and $\Gamma:=(0,T]\times \p\Om$ be the space-time domain and interface respectively. Then the degenerate diffusion problem reads: find $u,\,w:Q\to \R$ satisfying
\begin{equation}\label{eq:parabolic_equation}
\left\{ \begin{array}{rcll}
\partial_t u &=& \Delta w + f, & \text{in } Q, \\
w &\in& \Phi(u), & \text{in } Q, \\
w &=& 0, & \text{on } \Gamma, \\
u(0) &=& u_0, & \text{in } \Omega,
\end{array}
\right.
\end{equation}
Here, $\Phi:[0,\omega)\to \R$ denotes a nonlinear, monotonically increasing mapping for either $\omega=1$ or $\infty$. In particular, $\Phi' $ might vanish at $0$, making the problem an ODE (see Figure~\ref{fig:RichVsBio_phi}). On the other hand, $\Phi'$ might blow up at $\om=1$ (for instance, for biofilm growth \cite{van2002mathematical} and traffic flow models \cite{berthelin2008model}) or at infinity, as in the porous medium equation (PME) case. It might even become multivalued at $\om=1$, like in the Richards equation case \cite{mitra2024posteriori}. These scenarios set off the elliptic degeneracy.

\begin{figure}[h]
    \centering
    \begin{subfigure}[b]{0.32\textwidth}
        \centering
    \includegraphics[width=\textwidth, height=4.1cm]{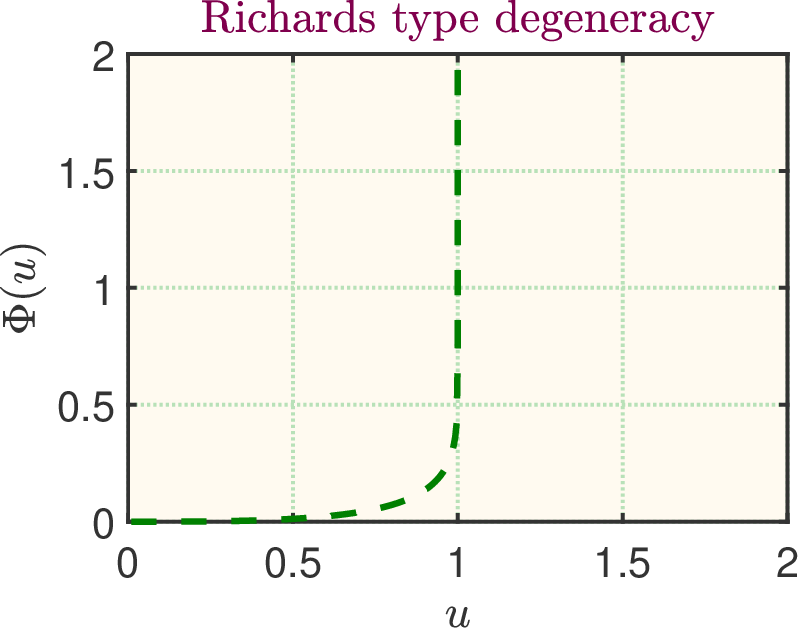}
        \caption*{$\Phi$ becomes multivalued at $u = 1$}
    \end{subfigure}
    \begin{subfigure}[b]{0.32\textwidth}
        \centering
        \includegraphics[width=\textwidth, height=4.1cm]{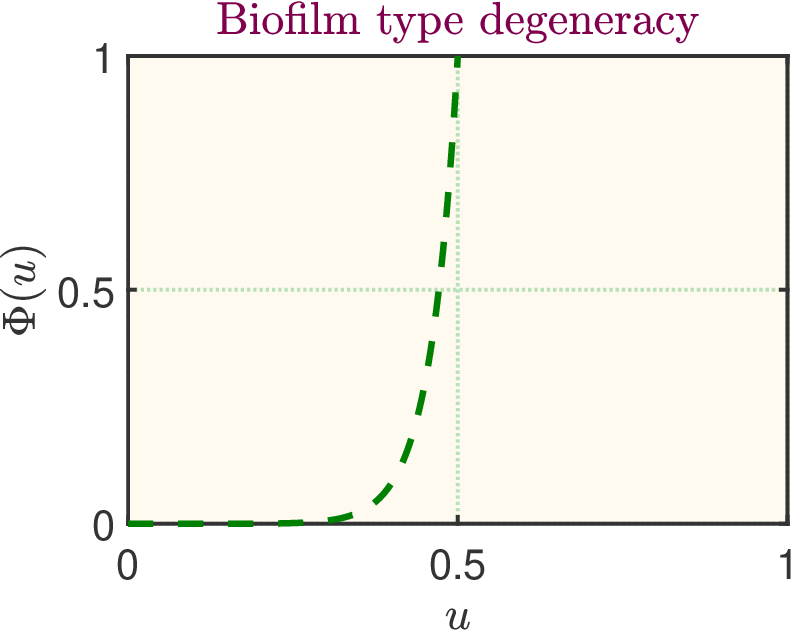}
        \caption*{$\Phi$ becomes infinite at $u = 1$}
    \end{subfigure}
      \begin{subfigure}[b]{0.32\textwidth}
        \centering
        \includegraphics[width=\textwidth, height=4.1cm]{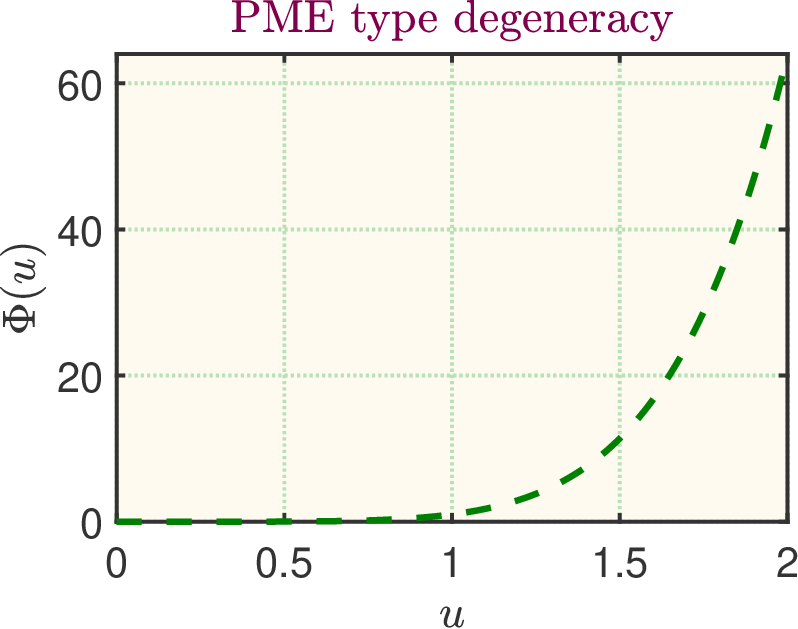}
        \caption*{degeneracy occurs at $u = 0$}
    \end{subfigure}
    \caption{Examples of degenerate nonlinearities $\Phi$: (\textbf{Left}) Richards-type \cite{mitra2024posteriori}, (\textbf{Center}) biofilm-type \cite{van2002mathematical,berthelin2008model}, and (\textbf{Right}) porous media equation-type \cite{vazquez2007porous} degeneracies.}
\label{fig:RichVsBio_phi}
\end{figure}
To deal with the double degeneracy discussed above, we use the framework discussed in~\cite{brenner2017improving} and also see in~\cite{javed2025robust}. The system is reformulated in terms of a new variable $s$, and two increasing functions $b, B \in C^1(\mathbb{R})$ that satisfy
\begin{align}\label{eq:condition}
    \Phi =B \circ b^{-1}, \text{ and }\; 0\leq b',B'\leq 1, \text{ and }  b'+B'\geq 1.
\end{align}
With this choice, if $u = b(s)$ and $w=B(s)$, then one immediately gets $w \in \Phi(u)$.  In this way, problem \eqref{eq:parabolic_equation} becomes of finding $s:Q\to \R$ satisfying
\begin{equation}\label{eq:reformulation}
\left\{ \begin{array}{rcll}
\partial_t b(s) &= &\Delta B(s) + f, \qquad & \text{ in } Q, \\
B(s) &=& 0, & \text{ on } \Gamma, \\
b(s(0)) &=& u_0, &\text{ in } \Omega. \\
\end{array}
\right.
\end{equation}
The advantage of this reformulation is that the functions $b$ and $B$ are Lipschitz continuous on $\mathbb{R}$, with Lipschitz constants equal to~1.  In this setting, the two types of degeneracy arise when either $b' \searrow 0$ (corresponding to the fast diffusion (elliptic) regime in the original formulation) or $B' \searrow 0$ (corresponding to the slow diffusion (ODE) regime).
Such a decomposition is possible under relatively weak assumptions, and an explicit construction of the $b,\, B$ functions is presented in \Cref{lemma:b_B_construction}, see also \cite{javed2025robust}. The graphs of these functions are illustrated in \Cref{bB-graphs} for the profiles of \Cref{fig:RichVsBio_phi}.

\begin{figure}[h!]
    \centering
    \begin{subfigure}{0.32\textwidth}
        \centering
        \includegraphics[width=\linewidth]{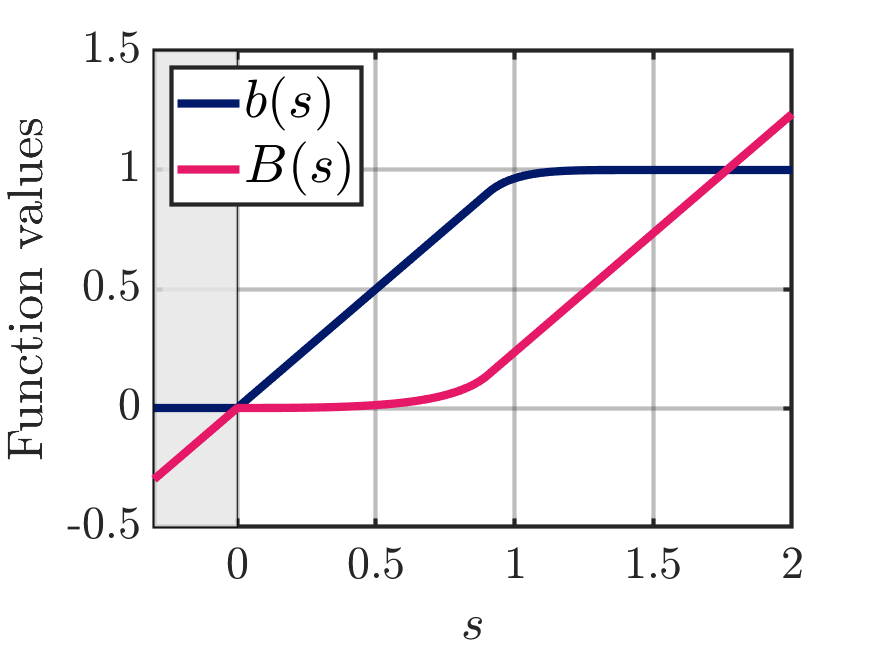}
    \end{subfigure}
    \hfill
    \begin{subfigure}{0.32\textwidth}
        \centering       \includegraphics[width=\linewidth]{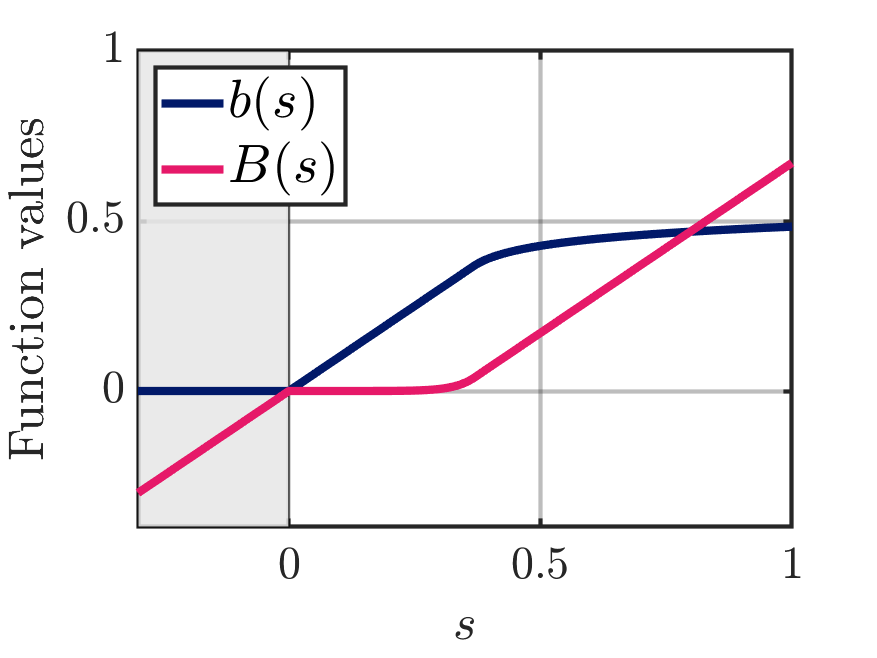}
    \end{subfigure}
     \hfill
    \begin{subfigure}{0.32\textwidth}
        \centering       \includegraphics[width=\linewidth]{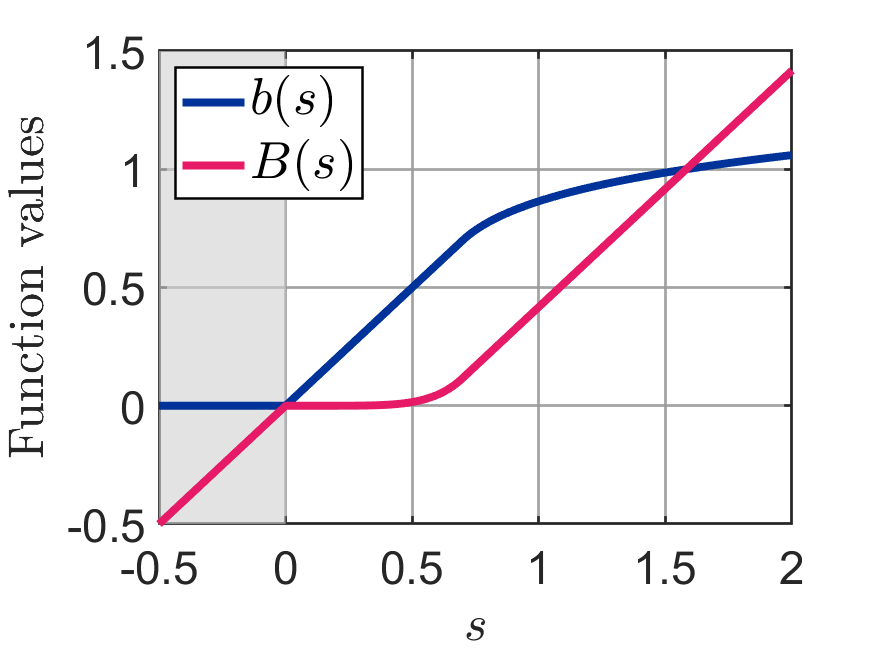}
    \end{subfigure}
\caption{The functions $b$ and $B$ for the mappings $\Phi$ appearing in different applications shown in \Cref{fig:RichVsBio_phi}, including their extensions to $s\le 0$. (Left) Richards case, (Center) biofilm case and (Right) porous medium equation case (PME).}
\label{bB-graphs}
\end{figure}

\noindent
Traditionally, the numerical treatment of such problems relies on semi-discretization techniques: either discretizing first in space \cite{MR2249024} (the method of lines), leading to a system of nonlinear ODEs in time, or discretizing first in time \cite{kavcur2006method} (Rothe’s method), solving a sequence of elliptic problems at successive time steps (time-stepping). However, features like free boundaries and saturated regions are typically confined to limited regions of the space-time domain and cannot be efficiently resolved by adaptive time-stepping or space refinements. Since their location is not known a priori, a posteriori indicators need to be considered. These considerations motivate variational space–time discretization \cite{neumuller2013space} and iterative linearization methods combined with adaptive refinements prompted by rigorous a posteriori error estimation techniques.


\textbf{Space-time discretization:} In recent years, there has been a rapidly growing interest in simultaneous space-time methods across a range of problems, including 
linear parabolic evolution equations. Various formulations and strategies have been proposed and analyzed, including conforming methods on general meshes \cite{steinbach2015space}, refinement strategies for flexible space-time meshes \cite{neumuller2011refinement}, simplex space-time meshes enabling local temporal refinement and unstructured meshing in both space and time 
\cite{Simplex_sp}, sparse Petrov-Galerkin discretizations \cite{Andreev2013}, space-time formulations in isogeometric analysis \cite{LANGER2016342, AntoniettiDede2025}, parallel multigrid algorithms with provable scalability \cite{GanderNeumuller2016}, space-time $hp$-approximations achieving exponential convergence under analytic regularity \cite{devaud2018space}, optimal adaptive wavelet solvers of linear complexity \cite{rekatsinas2019optimal}, coercive formulations yielding symmetric positive definite systems \cite{SteinbachZank2020}, and stable mixed variational formulations \cite{StevensonWesterdiep2021}. Space-time domain decomposition methods with non-matching spatial grids and asynchronous local time stepping have been developed for parabolic problems with solution features localized in space and time \cite{jayadharan:hal-03355088}. Closer to the nonlinear setting, \cite{beranek2025quasioptimal} reformulate parabolic equations with Lipschitz continuous, strongly monotone spatial operators by introducing an auxiliary variable, obtaining an equivalent system with a Lipschitz continuous operator and inverse, and establish quasi-optimality of the resulting space-time Galerkin discretizations. The discontinuous Galerkin method has been studied in \cite{Jan2012}, see also \cite{congreve2026residual,dolejvsi2024error,DOLEJSI2019276,DolejsiRoskovec2021}. Without these exceptions, methods are predominantly focused on \emph{linear} parabolic problems, and their extension to \emph{degenerate} parabolic problems in particular remains largely unexplored. Using the iterative space-time linearization introduced below, \emph{we reduce solving the reformulated nonlinear system \eqref{eq:reformulation} to solving a sequence of heat equations}. These are then discretized by space-time conforming finite element method \cite{steinbach2015space}, which is well-posed and consistent for all iterations.

\textbf{Space-time linearization:} A central challenge in the numerical solution of nonlinear degenerate parabolic equations is the design of robust and efficient linearization schemes. \emph{To the best of our knowledge, no linearization scheme with provable convergence has been developed for double degenerate problems within a fully global space-time variational framework}. For nonlinear elliptic problems, particularly those arising from time-discretizing parabolic equations, Newton's method is the classical choice \cite{bergamaschi1999mixed,lehmann1998comparison} due to its quadratic convergence. However, its convergence is guaranteed only when the initial guess is sufficiently close to the solution, which in the degenerate case translates into severe restrictions on the time step size or regularization \cite{radu2006newton,javed2025robust,fevotte2024adaptive}. To overcome this limitation, fixed-point iterative schemes have been proposed as robust alternatives. Among these, the L-scheme provides unconditional convergence regardless of the time step size, spatial discretization, or degree of degeneracy, albeit with a slow, linear convergence rate \cite{pop2004mixed, list2016study}. Several attempts have been made to reconcile with the trade-off between the speed of the Newton scheme and the stability of the L-scheme, including a modified Newton method \cite{mitra2019modified}, a modified Picard method \cite{celia1990general}, nested Newton iterations \cite{casulli2010nested},  L-scheme with fixed preconditioners \cite{kohle2025robust}, and an adaptive L/Newton scheme \cite{stokke2023adaptive}, to mention a few. For the Richards equation specifically, 
\cite{brenner2017improving} proposed the $b-B$ parametrization strategy that significantly improves the performance of Newton's method. Building on this, \cite{javed2025robust} extended these ideas to the L/M-schemes, yielding guaranteed convergence in double degenerate cases along with asymptotic quadratic convergence for the adaptive M-version. A related approach for singular degenerate evolution systems modelling biofilm growth was developed in \cite{SmeetsEtAl2024}.

However, all these results are restricted to the sequential time-stepping framework.  Here, we fill this gap by adopting the splitting L-scheme of \cite{javed2025robust} to space-time formulations. For an iterate $s^i$ and a constant $L>1$, the linearization finds the next iterate $s^{i+1}$ by solving a heat equation:
\begin{equation}\label{eq:Lsch-strong}
\partial_t(s^{i+1}-s^i) - \Delta(s^{i+1}- s^i) 
= -  L^{-1}[\partial_t b(s^i)-\Delta B(s^i) - f].
\end{equation}
We will show that \emph{this iterative method is unconditionally convergent even for double degenerate cases, the convergence being linear if the problem is non-degenerate}. The simple heat equation structure is used to construct a fully adaptive solver further on.

\textbf{A posteriori error estimation 
and adaptivity:} 
The global space-time approach, while powerful, comes with the high computational cost of solving and storing the entire 
space-time profile, all at once. However, this can be made computationally affordable by adaptive discretization methods, which enable local time-step refinements, unlike time-stepping. This goal is 
achieved for linear parabolic problems by space-time adaptive wavelet methods at optimal complexity \cite{schwab2009space}, but whose extension to nonlinear degenerate problems is non-trivial. Achieving adaptivity relies crucially on a posteriori error estimation. Guaranteed, locally space-time efficient, and polynomial degree robust estimates
were established for linear parabolic problems using equilibrated flux reconstructions in
\cite{ErnSmearsVohralik2017, ErnSmearsVohralik2019}, however, for the time-stepping case.  A posteriori estimators for linear and semilinear parabolic problems have also been developed in 
\cite{eriksson1991adaptive, Eriksson1995, 
schmich2008adaptivity, FUHRER202127, 
RattiVerani2019, GeorgoulisLakkisWihler2021}. 
Space-time adaptivity has been successfully applied to parabolic optimization and 
control problems in \cite{Meidner2012, gong2012space, LangerSchafelner+2022+247+266, GunzburgerKunoth2011}. 

 For nonlinear problems, a posteriori estimates driving adaptive refinement in both space and time were developed within sequential time-stepping frameworks in \cite{BernardiElAlaouiMghazli2014, 
CLEMENT2021103897}, with \cite{Dyja2018} investigating parallel-in-time framework and \cite{BARON2017104} additionally 
separating linearization errors. For the degenerate Richards equation,  in \cite{mitra2024posteriori}, guaranteed and locally space-time efficient estimators were derived that are robust with respect to the final time, and separate every error component. However, it too required sequential time-stepping and involved complex weighted norms.
Using discontinuous Galerkin (DG) methods, an adaptive strategy was proposed in \cite{DOLEJSI2019276}; reliable and efficient estimators were derived in \cite{DolejsiRoskovec2021,dolejvsi2024error}, however, in mesh and solution dependent norms; and in \cite{congreve2026residual} for mesh and nonlinearity dependent constants.

In this work, we draw heavy influence from \cite{mitra2023guaranteed}, where it was shown that for general elliptic problems, the total error, measured in an iteration-dependent energy dual-norm, can be orthogonally decomposed to a fully-computable linearization component and a discretization component arising solely from the linearization step. This produced fully-computable locally-efficient estimates, robust with respect to nonlinearities, paving the way for later works such as \cite{harnist2023robust,stokke2023adaptive,javed2025robust}. The estimates are even on a fixed norm for the L-scheme. Using the same principle here for the L-scheme, yields the following decomposition in \Cref{subsec:decomposition}
\[
\underbrace{\text{Total Error}}_{\substack{\text{measured by a fixed }\\\text{dual energy-norm of residual}}}\approx \underbrace{\left(\text{Linearization error}\right)}_{\substack{
\text{measured by the energy norm}\\
\text{of consecutive iterates}}} + \underbrace{\left(\text{Discretization error of the heat equation}\right)}_{\substack{
\text{measured by the dual energy norm }\\
\text{of the residual of the heat equation}}}.
\]
The discretization error due to the heat equation can potentially be estimated locally efficiently in space-time (see \cite{ErnSmearsVohralik2017} for the time-stepping case). Thus, \emph{so can the total error, and, in fact, robustly with respect to nonlinearities/degeneracies}. We propose such a reliable estimator that is fully-computable in parallel, and easily implementable in \Cref{sec:a-posteriori}. Using this estimator, a fully adaptive discretization/linearization scheme is proposed, which runs L-scheme iterations until linearization error is below a fraction of the discretization error, and then performs a refinement step based on the a posteriori estimator. Numerical experiments demonstrate the efficacy of this adaptive strategy.

The paper is organized as follows. 
\Cref{sec:setting} sets up the functional framework, states the main assumptions, and derives the weak and residual formulations. \Cref{Linearization} introduces the space-time $L$-scheme, establishes its rigorous convergence, and illustrates the convergence behaviour numerically. \Cref{subsec:decomposition} establishes the quasi-orthogonal decomposition of the total error into linearization and discretization components. \Cref{sec:apost} builds on this decomposition to derive fully-computable guaranteed a posteriori error bounds and proposes the adaptive refinement algorithm driven by local error indicators. \Cref{sec:Numerical_results} contains numerical experiments to validate the performance of the adaptive refinement strategy. Our findings are summarized in \Cref{sec:conclusion}.

\section{Mathematical formulation}\label{sec:setting}
In this section, we introduce the mathematical framework used throughout the paper. We start from the nonlinear degenerate parabolic model presented in \Cref{introduction}, followed by a reparametrization strategy. We then present the functional framework, weak formulation and the linear iterative scheme.

\subsection{Reparametrization}\label{subsec:reparametrization}
In view of the modeled processes mentioned before, one can see $u$ as a concentration or saturation. Then it is natural to think of an upper bound $\omega$ of $u$. Here, we either take $\omega = 1$ if $u$ is, for example, a concentration or saturation, or $\omega = \infty$ for unbounded solutions. We assume
\begin{enumerate}[label=(A.1\alph*)]
\item \label{ass:A1a}
The function $\Phi:(-\infty,\vs) \to [0,\infty)$ is continuous, differentiable, and strictly increasing in $(0,\vs)$. 
Moreover, $\Phi(s)=0$ for $s\leq 0$, and either $\lim_{s\searrow 0}\Phi'(0)>0$, or $\Phi$ is strictly convex in a right neighbourhood of 0. Further, the limit $\Phi_\Maxi:=\lim_{u\nearrow \vs}\Phi$ is either infinite, or, if $\Phi_\Maxi<\infty$, then we extend $\Phi$ to the set $[\Phi_\Maxi,\infty)$ at $u=\vs$.  
\end{enumerate}

Assumption \ref{ass:A1a} is very general, and covers all the nonlinearities described in \Cref{fig:RichVsBio_phi}.

\begin{enumerate}[label=(A.2\alph*)]
\item \label{ass:A1b}
There exist functions $b, B$  that are Lipschitz continuous, non-decreasing, and satisfy
\begin{equation}\label{eq:conditionbB1}
\left\{
\begin{aligned}
&B(s) = (\Phi \circ b)(s), \quad s \ge 0, \quad b(0)=0,\\
&|B(s) - B(r)| \le |s - r|, \quad |b(s) - b(r)| \le |s - r|, \forall\, s,r \in \mathbb{R},\\
&(b+B)(s) - (b+B)(r) \ge s - r, \quad \forall\, s \ge r .
\end{aligned}
\right.
\end{equation}
\end{enumerate}

\noindent
Note that by introducing the reparametrizations $b$ and $B$, the double degeneracy in \eqref{eq:parabolic_equation} due to $\Phi$ can be transformed into two degeneracies in \eqref{eq:reformulation}, involving two Lipschitz continuous functions satisfying \eqref{eq:conditionbB1}. For any convex $\Phi$, these functions can be found explicitly, as follows from  \cite{javed2025robust}, Lemma~2.5. This is summarized below.


\begin{lemma}[Construction of the $b$--$B$ decomposition]\label{lemma:b_B_construction}Let 
$\Phi$ have locally Lipschitz continuous derivatives in $(0,\vs)$. Assume that there exists $u^*\in (0,\vs)$ such that (after a possible rescaling of $\Phi$),\label{ass:Bphi}
\begin{equation*}
    \Phi'(u)\begin{cases}
        \leq 1 &\text{ for } u\in (0,u^*),\\
        =1&\text{ for } u=u^*,\\
        \geq 1 &\text{ for } u\in (u^*,\vs).
    \end{cases}
\end{equation*}    
Then, the functions $b, B : \mathbb{R} \to \mathbb{R}$,
\begin{subequations}\label{eq:bBexpression}
    \begin{align}
            &b(s):=\int_0^s \min\left\{1, \frac{1}{\Phi'(b(\rho))}\right\}d\rho=\begin{cases}
             s &\text{ if } s\leq u^*,\\
             \Phi^{-1}(\Phi(u^*) + s-u^*) &\text{ if } s\geq u^*.
         \end{cases}\\
         &B(s):=\int_0^s \min \left\{1, \Phi'(b(\rho))\right\}d\rho=\begin{cases}
             \Phi(s) &\text{ if } s\leq u^*,\\
             \Phi(u^*) + s-u^*&\text{ if } s\geq u^*,
         \end{cases}
    \end{align}   
\end{subequations}
satisfy Assumption \ref{ass:A1b}.

\end{lemma}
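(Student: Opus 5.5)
We verify directly that the explicit piecewise formulas are well defined, agree with the integral representations, and satisfy the three conditions in \eqref{eq:conditionbB1}. For $s\le 0$ we read the formulas as $b(s)=s$ and $B(s)=\Phi(s)=0$, consistent with \ref{ass:A1a}.

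\textbf{Step 1: consistency of the two representations.} For $s\le u^*$ put $b(s)=s$. Since $b(\rho)=\rho\le u^*$ on $(0,s)$, we have $\Phi'(b(\rho))\le 1$, so $\min\{1,1/\Phi'\}=1$ and $\min\{1,\Phi'\}=\Phi'(\rho)$. Integrating gives $b(s)=s$ and $B(s)=\Phi(s)-\Phi(0)=\Phi(s)$.

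For $s\ge u^*$ put $b(s)=\Phi^{-1}(\Phi(u^*)+s-u^*)$. This is well defined because $\Phi$ is continuous and strictly increasing on $(0,\vs)$ with range reaching $\Phi_\Maxi$. If $\Phi_\Maxi<\infty$, the multivalued extension at $u=\vs$ makes $\Phi^{-1}$ equal to $\vs$ for arguments $\ge \Phi_\Maxi$. Then $b(s)\ge u^*$, so $\Phi'(b)\ge1$, and the integrands become $1/\Phi'(b(\rho))$ and $1$. The second immediately gives $B(s)=\Phi(u^*)+s-u^*$. For the first, differentiating $\Phi(b(s))=\Phi(u^*)+s-u^*$ gives $b'(s)=1/\Phi'(b(s))$. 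This is where local Lipschitz continuity of $\Phi'$ is used: the implicit ODE $b'=\min\{1,1/\Phi'(b)\}$, $b(0)=0$, has a unique solution, so the integral definition is unambiguous and coincides with the explicit formula. On the flat part past $\Phi_\Maxi$ the derivative is interpreted as $0$.

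\textbf{Step 2: the conditions in \eqref{eq:conditionbB1}.} We have $b(0)=0$ directly. The identity $B=\Phi\circ b$ for $s\ge0$ holds on $[0,u^*]$ trivially, and on $[u^*,\infty)$ because $\Phi(b(s))=\Phi(u^*)+s-u^*=B(s)$. By Step 1, both $b$ and $B$ are absolutely continuous with a.e. derivatives in $[0,1]$, being minima with $1$ of nonnegative quantities. Hence both are non-decreasing and $1$-Lipschitz.

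\textbf{Step 3: the lower bound on $b+B$.} The key observation is that for every $x\ge0$ one has $\min\{1,1/x\}+\min\{1,x\}\ge1$, because one of the two terms equals $1$. Therefore $(b+B)'\ge1$ a.e., and integrating from $r$ to $s$ yields $(b+B)(s)-(b+B)(r)\ge s-r$.

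The main obstacle is the regime $\Phi_\Maxi<\infty$, where $b$ saturates at $\vs$ and $\Phi'$ blows up. Here we need to check that $b$ remains Lipschitz and that $(b+B)'\ge1$ still holds. It does, since $B'=1$ there; Lipschitz continuity of $b$ follows from $b'\le1$ everywhere, taken in the sense of one-sided difference quotients.
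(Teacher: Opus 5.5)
Your proof is correct. The paper does not prove this lemma itself; it only refers to Lemma~2.5 of \cite{javed2025robust}. Your direct verification therefore supplies the omitted argument. Its core is the right one: at every point one of the two integrands $\min\{1,1/\Phi'(b)\}$, $\min\{1,\Phi'(b)\}$ equals $1$, and both lie in $[0,1]$. This gives monotonicity, the $1$-Lipschitz bounds and $(b+B)'\geq 1$ at once.

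Two small refinements, neither of which is a gap:

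\textbf{Uniqueness.} Uniqueness for the integral equation defining $b$ does not actually need local Lipschitz continuity of $\Phi'$, only its continuity. The right-hand side $F(v)=\min\{1,1/\Phi'(v)\}$ is continuous and strictly positive on $(-\infty,\vs)$. Hence any continuous solution is $C^1$ and strictly increasing, and separation of variables, $s=\int_0^{b(s)}\mathrm{d}v/F(v)$, determines it uniquely. The Lipschitz hypothesis is presumably there to make $b'$ and $B'$ locally Lipschitz, which matters for Newton-type schemes rather than for Assumption~\ref{ass:A1b}.

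\textbf{Saturated regime.} When $\Phi_\Maxi<\infty$ and $s\geq u^*+\Phi_\Maxi-\Phi(u^*)$, the identity $B=\Phi\circ b$ must be read as the inclusion $B(s)\in\Phi(\vs)=[\Phi_\Maxi,\infty)$. Your formula $B(s)=\Phi(u^*)+s-u^*$ does satisfy this inclusion.
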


\noindent
By an abuse of notation, we take $\min\left\{1, \frac{1}{\Phi'(z)}\right\} = 1$ if $\Phi'(z)=0$. 
This means that $b(s)=s$ and $B(s)=0$ for any $s \le 0$. 
For subsequent analysis, for any Lipschitz continuous $\psi$, we use the notation:


\begin{equation}\label{eq:def_errors_b}
    \psi[s,r]
    :=
    \begin{cases}
        \dfrac{\psi(s)-\psi(r)}{s-r}, &\text{if } s\neq r, \\[1em]
        \dfrac{1}{2}\left[\lim\limits_{z\nearrow s} \psi'(z) + \lim\limits_{z\searrow s} \psi'(z)\right], &\text{if }  s=r.
    \end{cases}
\end{equation}
Using \ref{ass:A1b}, one easily gets for any $s, r \in \mathbb{R}$,
\begin{align}\label{eq:condition}
0 \leq b[s,r], \quad B[s,r]\leq 1 \quad {\text{and}} \quad 
 b[s,r]+B[s,r] \geq 1.
\end{align} 
The graphs of $b$ and $B$ given by \eqref{eq:bBexpression} for various $\Phi$ function are presented in \Cref{bB-graphs}. 

\noindent
Finally, for the source function $f$ and initial condition $u_0$ we assume the following
\begin{enumerate}[label=(B.\arabic*)]
\item \label{ass:A2} $u_0\in L^2(\Om)$ is in $[0,\omega)$ a.e. The source function $f\in L^2(Q)$ admits a non-negative solution to Problem~\ref{eq:reformulated_weak_form}.
\end{enumerate}
\subsection{Notations and functional spaces}\label{subsec:functional-spaces}
We briefly introduce the functional setting used throughout the paper. 
Let $L^2(\Omega)$ be the space of measurable, square-integrable functions defined on $\Omega$. The space $H^1(\Omega)$ includes functions in $L^2(\Omega)$ having weak derivatives in $L^2(\Omega)$, while, $H^1_0(\Omega)$ contains functions in $H^1(\Omega)$ having vanishing trace on $\partial \Omega$. 
The dual space of $H^1_0(\Omega)$ is denoted by $H^{-1}(\Omega)$. The inner product and norm in $L^2(\Omega)$ are denoted by $(\cdot, \cdot)$ and $\|\cdot\|$, respectively, while the duality pairing between $H^{-1}(\Omega)$ and $H_0^1(\Omega)$ is denoted by $\langle \cdot, \cdot \rangle$. We also use the notation $a\lesssim b$ to indicate that there exists a generic constant $C>0$, independent of the discretization parameters, such that $a\leq Cb$.

We will use the Poincar\'e--Friedrichs inequality which states that there exists a constant $C_{\,\!_{\Omega}}^{\mathrm{P}} > 0$ (depending only on the shape of $\Omega$) such that, for any $v \in H_0^1(\Omega)$ one has
\begin{align}\label{eq:Poincare_Friedrichs inequality}
\|v\|\le C_{\,\!_{\Omega}}^{\mathrm{P}} \, h_\Om\,\|\nabla v\|.
\end{align}
\noindent
Here, $h_\Om$ is the diameter of $\Om$. By \eqref{eq:Poincare_Friedrichs inequality}, $\|\nabla v\|$ defines an equivalent norm on $H_0^1(\Omega)$, and a norm on $H^{-1}(\Omega)$ can be defined as
\begin{align}
    \|g\|_{H^{-1}(\Omega)} 
:= \sup_{\varphi \in H_0^1(\Omega)\setminus \{0\}} 
\frac{\langle g, \varphi \rangle}{\|\nabla \varphi\|} \leq C_{\,\!_{\Omega}}^{\mathrm{P}} \, h_\Om \|g\|,\label{eq:defHinv_norm}
\end{align}
the last inequality holding only if additionally $g\in L^2(\Om)$.
By the Riesz representation theorem, for each $g \in H^{-1}(\Omega)$ there exists a unique $G_g \in H_0^1(\Omega)$ satisfying
\begin{align}\label{eq:weak_poisson}
(\nabla G_g, \nabla \varphi) = \langle g, \varphi \rangle \text{ for all } \varphi \in H_0^1(\Omega), \text{ with } \|g\|_{H^{-1}(\Omega)} = \|\nabla G_g\|.
\end{align}

\noindent
Let $Q'\subseteq Q$ be a space-time subdomain of $Q$. For the fluxes we use the space
\[
H(\mathrm{div};Q')
:= \big\{ \mathbf{v} \in [L^2(Q')]^d : \text{ the weak (spatial) divergence } \nabla\!\cdot \mathbf{v} \in L^2(Q') \big\},
\]
equipped with the norm $\|\mathbf{v}\|_{H(\mathrm{div};Q'.)} 
:= \|\mathbf{v}\|_{L^2(Q')} + \|\nabla\!\cdot \mathbf{v}\|_{L^2(Q')}$.

\noindent
For any Banach space $X$, the Bochner space  $L^2(0,T;X)$ consists of all strongly measurable functions $v:(0,T)\to X$ such that
\[
\|v\|_{L^2(0,T;X)}
:=\left[\int_0^T \|v(t)\|_X^2\,\dd t \right]^{1/2}
<\infty.
\]
We will also use the Bochner-Sobolev space $H^1(0,T;X)
:=\{v\in L^2(0,T;X): \partial_t v\in L^2(0,T;X)\}$. Using these, we define the main spaces that are used in the subsequent analysis:
\begin{subequations}\label{eq:spaces}
 \begin{align}
\mathcal{S} &:= \Big\{ s \in L^2(Q) 
\;\big|\; 
b(s) \in H^1(0,T;H^{-1}(\Omega)), \;
B(s) \in L^2(0,T;H_0^1(\Omega))\Big\}\\
\mathcal{V} &:= L^2(0,T;H_0^1(\Omega)) \cap H^1(0,T;H^{-1}(\Omega)) \\
\mathcal{W} &:= L^2(0,T;H^2(\Omega)) \cap H^1(0,T;L^2(\Omega)).
\end{align}
\end{subequations}
We will seek weak solutions to \eqref{eq:reformulation} in the space $\calS$. This solution will be obtained through iterations, where the iterates will lie in $\mathcal{V}$. Observe that $\mathcal{W} \subset \mathcal{V}$ and $\mathcal{W}\subset \calS$, but $\calV\not\subset \calS$.
The space $\mathcal{V}$ is also compactly embedded in $L^2(Q)$ due to Aubin-Lions Lemma, and continuously embedded in $C([0,T];L^2(\Omega))$, see \cite{simon1986compact}. This embedding ensures that functions in $\mathcal{V}$ are well-defined for every $t \in [0, T]$. Therefore, one can equip  $\mathcal{V}$ with the norm
\begin{align}\label{eq:norm_V}
    \|v\|_{\mathcal V}^2
:=\int_0^T \!\big(\|\nabla v(t)\|^2+\|\partial_t v(t)\|_{H^{-1}(\Omega)}^2\big)\,\dd t
+\|v(T)\|^2.
\end{align}
\noindent

\subsection{Weak formulation}
We can now introduce the concept of a weak solution to \eqref{eq:reformulation}:

\begin{problem}[{Weak solution of \eqref{eq:reformulation}}]\label{eq:reformulated_weak_form}
A weak solution of \eqref{eq:reformulation} is a function $s \in \cal S$ satisfying $b(s(0))=u_0$ in $L^2(\Omega)$, and for all $\varphi\in L^2(0,T;H_0^1(\Omega))$,
    \begin{align}\label{eq:weak form}
    \int_0^T\left\langle \partial_t b(s), \varphi\right\rangle+ \int_0^T\left(\nabla B(s), \nabla \varphi\right) = \int_0^T\left( f, \varphi\right).
\end{align}
\end{problem} 
Without being exhaustive, for results concerning the existence and uniqueness of solutions for doubly-degenerate equations we refer to \cite{alt1983quasilinear, ANDREIANOV20173633,CarilloEntropy,CARRILLO199993,DroniouEymardTalbot,LuJaeger, pop2011regularization,ZouExistence}. 

\subsection{Residual formulation}\label{subsec:Residual-formulation}
To prepare for the space-time convergence and error analysis, we introduce a residual notation for Problem \ref{eq:reformulated_weak_form}. More precisely, we associate a residual operator $\calR : \calS 
\;\longrightarrow\; L^2(0,T;H^{-1}(\Om))$ to \eqref{eq:weak form}. Specifically, given $s \in \mathcal{S}$, $\calR(s) \in L^2(0,T;H^{-1}(\Om))$ is defined as
\begin{align}\label{eq:residual}
   \calR(s)( \varphi) := \int_0^T\!\left\langle \partial_t b(s), \varphi \right\rangle 
  + \int_0^T\!(\nabla B(s), \nabla \varphi)
  - \int_0^T\!( f, \varphi )
\end{align}
for all $\varphi \in L^2(0,T;H_0^1(\Omega))$. Observe that $\calR(s)=0$ if and only if $s$ solves Problem \ref{eq:reformulated_weak_form}. For future reference, we also define the operator for the heat equation $\resH:\calV\to L^2(0,T;H^{-1}(\Om))$ as 
\begin{align}\label{eq:resH_def}
        \resH(s)(\varphi)   
  := \int_0^T\!\left\langle \partial_t s, \varphi \right\rangle 
  + \int_0^T\!(\nabla s, \nabla \varphi).
\end{align}
The following result [Theorem 2.1 of \cite{ErnSmearsVohralik2017}] will be used multiple times.
\begin{lemma}\label{lemma_2.2} For any $s\in \calV$,
\[\|s\|_{\calV}^2 =\int_0^T \|\resH(s)\|^2_{L^2(0,T;H^{-1}(\Omega))} + \|s(0)\|^2.\]\label{lemma:resH}
\end{lemma}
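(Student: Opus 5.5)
Writing $\mathcal{R}_{\mathrm H}(s)(t)$ for the pointwise-in-time functional $\partial_t s(t)-\Delta s(t)\in H^{-1}(\Omega)$, the statement should be read as
\[
\|s\|_{\mathcal V}^2=\|\mathcal{R}_{\mathrm H}(s)\|^2_{L^2(0,T;H^{-1}(\Omega))}+\|s(0)\|^2 .
\]
The plan is to expand the squared dual norm of the heat residual pointwise in time and then integrate. By the Riesz representation \eqref{eq:weak_poisson}, the $H^{-1}$ norm of $g\in H^{-1}(\Omega)$ is $\|\nabla G_g\|$. Taking $g=\partial_t s(t)-\Delta s(t)$, and noting that $-\Delta s(t)$ is represented by $s(t)$ itself (since $(\nabla s(t),\nabla\varphi)=\langle-\Delta s(t),\varphi\rangle$ for $s(t)\in H_0^1(\Omega)$), we get $G_g=G_{\partial_t s(t)}+s(t)$.

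Next, expand the square. This gives
\[
\|\nabla G_g\|^2=\|\partial_t s(t)\|_{H^{-1}(\Omega)}^2+\|\nabla s(t)\|^2+2(\nabla G_{\partial_t s(t)},\nabla s(t)),
\]
and, again by \eqref{eq:weak_poisson}, the cross term equals $2\langle\partial_t s(t),s(t)\rangle$. Integrating over $(0,T)$ yields
\[
\|\mathcal{R}_{\mathrm H}(s)\|^2_{L^2(0,T;H^{-1})}=\int_0^T\big(\|\nabla s\|^2+\|\partial_t s\|^2_{H^{-1}}\big)\,\dd t+2\int_0^T\langle\partial_t s,s\rangle\,\dd t .
\]

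The key step, and the only nontrivial one, is the integration-by-parts identity
\[
2\int_0^T\langle\partial_t s,s\rangle\,\dd t=\|s(T)\|^2-\|s(0)\|^2,
\]
valid for $s\in\mathcal V$. This is the classical Lions--Magenes result for the Gelfand triple $H_0^1\subset L^2\subset H^{-1}$. It follows because $t\mapsto\|s(t)\|^2$ is absolutely continuous with derivative $2\langle\partial_t s,s\rangle$, which is justified by density of smooth functions in $\mathcal V$ together with the continuous embedding $\mathcal V\hookrightarrow C([0,T];L^2(\Omega))$ already noted in the paper. Substituting this identity into the previous display and adding $\|s(0)\|^2$ to both sides gives exactly the definition \eqref{eq:norm_V} of $\|s\|_{\mathcal V}^2$.

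The main obstacle is purely one of rigor in that integration-by-parts step, namely the density argument in $\mathcal V$. I would cite it rather than reprove it, consistent with the paper attributing the result to Theorem 2.1 of \cite{ErnSmearsVohralik2017}.
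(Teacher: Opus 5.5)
Your proposal is correct and follows essentially the same route as the paper: represent the heat residual through its Riesz representative $\nabla(G_{\partial_t s}+s)$, expand the square, identify the cross term with $2\langle\partial_t s,s\rangle$, and integrate in time to obtain $\|s(T)\|^2-\|s(0)\|^2$. The only cosmetic difference is that the paper obtains $G_{\partial_t s}$ as $\partial_t G_s$, the time derivative of the representative of $s$, whereas you represent $\partial_t s(t)$ directly. Your reading of the statement without the spurious outer $\int_0^T$ is exactly what the paper's proof establishes.
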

\vspace{-15mm}
\begin{proof}
 Let \( G_s \in L^2(0,T;H_0^1(\Omega)) \)  satisfy by Riesz representation theorem 
\begin{align}\label{eq:GF_test}
    \int_0^T \left(\nabla G_s,\nabla \psi\right) = \int_0^T \left(s, \psi\right).
\end{align}
for all $\psi \in L^2(0,T;H_0^1(\Omega))$.
Observe that, for a.e.\ $t$, $G_s(t)$ is the Green's function associated to the Laplace equation with homogeneous Dirichlet boundary condition, i.e., the solution to $-\Delta G_s(t) = s(t)$ in $\Omega$. Since $s\in H^1(0,T;H^{-1}(\Omega))$, one obtains $\partial_t G_s \in L^2(0,T;H_0^1(\Omega))$, and for all $\psi \in L^2(0,T;H_0^1(\Omega))$
\begin{align}\label{eq:nabla_GF}
\int_0^T (\nabla \partial_t G_s, \nabla \psi)
= \int_0^T \langle \partial_t s, \psi\rangle,\text{ implying }
\|\partial_t G_s\|_{L^2(0,T;H^1_0(\Omega))}
\overset{\eqref{eq:weak_poisson}}{=}
\|\partial_t s\|_{L^2(0,T;H^{-1}(\Omega))}.
\end{align}
Then, for the residual $\resH$ one has $\resH(s)(\psi)= \int_0^T \left( \nabla ( \partial_t G_s + s), \nabla \psi \right)$. 
Therefore,
\begin{align*}
&\| \resH(s) \|^2_{L^2(0, T;H^{-1}(\Om))}\overset{\eqref{eq:weak_poisson}}= \int_0^T \| \nabla ( \partial_t G_s + s) \|^2 = \int_0^T \left( \| \nabla \partial_t G_s \|^2 + \| \nabla s\|^2 + 2 ( \nabla \partial_t G_s, \nabla s ) \right)\\
&\overset{\eqref{eq:nabla_GF}}= \|\p_t s\|_{L^2(0,T;H^{-1}(\Om))}^2 + \|\nabla s\|_{L^2(Q)}^2 + 2 \int_0^T \langle \partial_t s, s \rangle \, \overset{\eqref{eq:norm_V}}= \|s\|_\calV^2 - \|s(0)\|^2. 
\end{align*}
In the last step, we have used the equality $2\int_0^T\langle \p_t s, s\rangle = \int_0^T \frac{d}{dt} \|s\|^2=\|s(T)\|^2 - \|s(0)\|^2$. 
\end{proof}

\section{Linearization scheme}\label{Linearization}
Note that Problem \ref{eq:reformulated_weak_form} is nonlinear in both $b(s)$ and $B(s)$; we consider a linear iterative scheme to approximate its solution.
With $i \in \mathbb{N}$ denoting the iteration index, we define the sequence of approximations \(\{s^i\}_{i \in \mathbb{N} }\) as follows. Given $s^i \in \mathcal{S}$, the next iterate solves the following problem. 
\begin{problem}[{Space-time L-scheme for Problem \ref{eq:reformulated_weak_form}}]\label{pb:2}
Let $i\in \N$ be fixed, and $L>1$ be a constant. Assume that $s_0 \in L^2(\Omega)$ can be taken such that $b(s_0) = u_0$, and $s^i \in  (\mathcal{S}\cap \calV)$ is known, satisfying $s^i(0)=s_0$. Find $s^{i+1} \in \mathcal{V}$ such that $s^{i+1}(0)=s_0$ and for all $\varphi \in  L^2(0,T;H_0^1(\Omega))$, one has
\begin{align}\label{eq:linearization_gen}
\int_0^T \!\left\langle \partial_t \left( L(s^{i+1} - s^i) + b(s^i) \right), \varphi \right\rangle \,  
+ \int_0^T \!\left( \nabla \left( L(s^{i+1} - s^i) + B(s^i) \right), \nabla \varphi \right) 
= \int_0^T \!( f, \varphi ).
\end{align}
\end{problem}
Observe that \eqref{eq:linearization_gen} is nothing but the linear heat equation in a weak form. This linearization approach is inspired by the $L$-scheme \cite{list2016study,pop2004mixed} developed for elliptic problems. This particular implementation with $b$-$B$ functions is inspired by the double splitting scheme in \cite{javed2025robust}. By \eqref{eq:condition}, $b$ and $B$ are Lipschitz continuous, both having $1$ as 
Lipschitz constant. To ensure convergence, one needs to take the stabilization parameter $L \geq 1$, and the proof of \Cref{theo:converegence} below gives the additional restriction $L < 2$. Therefore, we take $L \in [1, 2)$, and in practice use $L = 1$. We define the linearized residual
\[
\mathcal{R}_{\mathrm{lin}} :
\mathcal{V}\times(\mathcal{S}\cap\mathcal{V})
\to L^2(0,T;H^{-1}(\Omega)),
\]
\begin{align}\label{eq:defRlin}
\mathcal{R}_{\mathrm{lin}}(s,\bar{s})&:= L \mathcal{R}_{\mathrm{H}}(s-\bar{s}) + \mathcal{R}(\bar{s}),
\end{align}
with $\mathcal{R}_{\mathrm{H}}$ introduced in \eqref{eq:resH_def}.
\begin{remark}[Operator definition of the L-scheme] Given $s^i \in \mathcal{S}\cap\mathcal{V}$, Problem \ref{pb:2} can be interpreted as to find $s^{i+1} \in \mathcal{V}$ satisfying $s^{i+1}(0)=s_0$ such that
\begin{align}
&\mathcal{R}_{\mathrm{lin}}(s^{i+1},s^i)= 0.
\label{eq:l_b}
\end{align}
\end{remark}

The main results regarding well-posedness and the convergence of the scheme are summarized in the following theorems.
\begin{theorem}[Well-posedness of the L-scheme]
\label{Convergence_theorem}
Let $s^i \in \mathcal{S}$ be given. Then, for any stabilization parameter $L\geq1$, 
the linearized problem~\eqref{eq:linearization_gen} admits a unique solution $s^{i+1} \in \mathcal{V}$. 
If, in addition, $s^i \in \mathcal{W}$, then $s^{i+1} \in \mathcal{W}\subset (\calS\cap \calV)$, and the iteration preserves regularity.
\end{theorem}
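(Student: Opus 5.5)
We set $e:=s^{i+1}-s^i$ and rewrite \eqref{eq:linearization_gen} as a heat equation for $e$. Given $s^i\in\mathcal S$, the data $g:=f-\partial_t b(s^i)+\Delta B(s^i)$ defines an element of $L^2(0,T;H^{-1}(\Omega))$ through
\[
\langle g,\varphi\rangle:=(f,\varphi)-\langle\partial_t b(s^i),\varphi\rangle-(\nabla B(s^i),\nabla\varphi),
\]
because $b(s^i)\in H^1(0,T;H^{-1}(\Omega))$ and $B(s^i)\in L^2(0,T;H^1_0(\Omega))$. Problem \eqref{eq:linearization_gen} then reads: find $e$ with $e(0)=0$ and $L\,\resH(e)=g$. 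Equivalently, $s^{i+1}=s^i+e$ and $L\,\partial_t e-L\Delta e=g$.

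\textbf{Existence and uniqueness.} I would invoke the classical Lions theorem (Galerkin or Faedo argument) for the linear parabolic problem
\[
\partial_t e-\Delta e=L^{-1}g\in L^2(0,T;H^{-1}(\Omega)),\qquad e(0)=0 .
\]
This gives a unique $e\in\mathcal V$. Uniqueness also follows directly from \Cref{lemma_2.2}: the difference $d$ of two solutions satisfies $\resH(d)=0$ and $d(0)=0$, so $\|d\|_{\mathcal V}=0$. The point to watch is that $s^i$ itself must lie in $\mathcal V$ for $s^{i+1}=s^i+e$ to be in $\mathcal V$. This holds under the standing hypothesis $s^i\in\mathcal S\cap\mathcal V$ of Problem~\ref{pb:2}, and it is consistent with the initial condition $s^i(0)=s_0$, so that $s^{i+1}(0)=s_0$.

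\textbf{Regularity.} Assume $s^i\in\mathcal W$. Since $b,B$ are Lipschitz, the chain rule gives
\[
\partial_t b(s^i)=b'(s^i)\,\partial_t s^i\in L^2(Q),\qquad |b'|\le1 .
\]
For the spatial term I would not expand $\Delta B(s^i)$ directly, because $B$ is only Lipschitz. Instead I would handle $u:=Ls^{i+1}-(L-1)s^i$ differently: writing $Ls^{i+1}=Le+Ls^i$, the equation becomes
\[
\partial_t(Ls^{i+1})-\Delta(Ls^{i+1})=f-\partial_t\big(b(s^i)-Ls^i\big)+\Delta\big(B(s^i)-Ls^i\big).
\]
The term $\Delta(Ls^i)$ lies in $L^2(Q)$, but $\Delta B(s^i)$ requires the most care. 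I expect this to be the main obstacle. My plan is to use \Cref{lemma:b_B_construction}, where $B$ is $C^1$ with a locally Lipschitz derivative, so that
\[
\Delta B(s^i)=B'(s^i)\Delta s^i+B''(s^i)|\nabla s^i|^2 .
\]
The second term is controlled by the embedding $\mathcal W\hookrightarrow L^4(0,T;W^{1,4})$ in $d\le3$, which follows by interpolating $L^2H^2$ with $L^\infty H^1$. If this fails in general, I would state the regularity under the additional assumption that $B(s^i)\in L^2(0,T;H^2(\Omega))$. This assumption is natural since $s^i\in\mathcal S$ and $\Delta B(s^i)$ is then an $L^2$ function.

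With right-hand side in $L^2(Q)$ and $s_0\in H^1_0(\Omega)$, maximal parabolic regularity on a convex or $C^{1,1}$ domain yields $e\in\mathcal W$. The compatibility condition holds because $e(0)=0$. Finally, $s^{i+1}\in\mathcal W\subset\mathcal S\cap\mathcal V$ by the inclusions noted after \eqref{eq:spaces}.
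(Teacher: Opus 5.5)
Your existence and uniqueness part coincides with the paper's argument (classical Lions/Galerkin theory, Evans Ch.~7), applied to $e=s^{i+1}-s^i$ with $e(0)=0$. Two of your additions are correct refinements: uniqueness via \Cref{lemma_2.2}, and the remark that one needs $s^i\in\mathcal{V}$. The stated hypothesis $s^i\in\mathcal{S}$ alone is not enough, since $\mathcal{S}\not\subset\mathcal{V}$.

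The regularity half has a genuine gap, at exactly the step you flag: $\Delta B(s^i)\in L^2(Q)$. The paper skips it by asserting that $\mathcal{R}(s^i)$ extends to a functional on $L^2(Q)$ once $s^i\in\mathcal{W}$. For $B$ merely Lipschitz as in \ref{ass:A1b}, this is false. Take $d=1$, and suppose $B'$ jumps by $[B'](k)\neq0$ at a level $k$ that $s^i(t,\cdot)$ crosses transversally at $x_0(t)$. Then $\partial_{xx}B(s^i)$ contains the singular part $[B'](k)\,|\partial_x s^i(t,x_0(t))|\,\delta_{x_0(t)}$. Hence $L(\partial_t e-\partial_{xx}e)\notin L^2(Q)$, so $e\notin\mathcal{W}$ and $s^{i+1}\notin\mathcal{W}$. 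Such kinks are not excluded: \ref{ass:A1a} allows $\lim_{s\searrow0}\Phi'(s)>0$, and then the $B$ of \Cref{lemma:b_B_construction} has a kink at $s=0$. The PME functions in \eqref{eq:bB_PME}, as written, also have $B'$ jumping at $s=0$.

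Your chain-rule repair therefore needs two things you do not have.
\begin{itemize}
\item It needs $B'$ to be globally Lipschitz. The merely \emph{local} Lipschitz continuity of $\Phi'$ on $(0,\omega)$ assumed in \Cref{lemma:b_B_construction} does not provide this; for the PME with $1<m<2$, $B''(s)\sim s^{m-2}$ as $s\searrow0$.
\item It needs $\nabla s^i\in L^4(Q)$, which your interpolation does not give for $d=3$. Interpolating $L^2(0,T;H^2(\Omega))$ with $L^\infty(0,T;H^1(\Omega))$ gives $L^4(0,T;H^{3/2}(\Omega))$. But $H^{3/2}(\Omega)\subset W^{1,4}(\Omega)$ only for $d\le2$; for $d=3$ you only get $W^{1,3}$. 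The parabolic scaling $v(\lambda^2t,\lambda\bm{x})$ shows that no other route helps: for $d=3$, $\mathcal{W}$ is not contained in $L^4(0,T;W^{1,4}(\Omega))$.
\end{itemize}

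The fallback hypothesis $B(s^i)\in L^2(0,T;H^2(\Omega))$ does not rescue the argument. It is not implied by $s^i\in\mathcal{S}$, so that justification is circular. It would also have to be re-derived for $s^{i+1}$ before the next step, so it does not show that the iteration preserves regularity.

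What your argument does establish is a restricted, self-propagating version: $s^i\in\mathcal{W}$ implies $s^{i+1}\in\mathcal{W}$, provided $d\le2$, $B\in C^{1,1}(\mathbb{R})$, and $\Omega$ is convex or $C^{1,1}$. You rightly add the domain hypothesis; the paper, which assumes only a Lipschitz boundary, omits it. In the generality stated in the theorem, neither your argument nor the paper's one-line justification closes the regularity claim.
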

\noindent
Given $s^i \in \mathcal{S}$, the existence and uniqueness of $s^{i+1} \in \mathcal{V}$ are well known (see, e.g., \cite[Chapter~7]{evans2022partial}). Moreover, if $s^i\in \calW$, then $\calR(s^i)$ can be extended to a linear functional on $L^2(Q)$. The embedding $\calW\subset C(0,T;H^1_0(\Om))$ gives $s^i(0)\in H^1_0(\Om)$. Using regularity results from \cite[Chapter 7]{evans2022partial}, one also gets $s^{i+1}\in \calW$.


\begin{theorem}[Convergence of the L-scheme]\label{theo:converegence}
Take $L \in (1,2)$ and let $s^0\in \calW$ be an initial guess. Further, let $\{s^i\}_{i\ge 0}\subset \calW$ be the sequence generated by the $L$-scheme, and $s\in \calS$ be the weak solution to Problem~\ref{eq:reformulated_weak_form}. For every $t\in(0,T]$, following convergence results hold as $i \to \infty$
\begin{subequations}\label{eq:L_conv_1}
    \begin{align}
\|s^{i+1}-s^{i}\| &\longrightarrow 0
&&\text{in } L^2(Q),\label{eq:L_conv_1a}\\
b(s^i) &\longrightarrow b(s)
&&\text{in } L^2(0,T;H^{-1}(\Omega)),\\
\int_0^t B(s^i) &\longrightarrow \int_0^t B(s)
&&\text{in } L^\infty(0,T;L^2(\Omega)),\\
\int_0^t [B(s^{i-1}) + L(s^i-s^{i-1})]& \longrightarrow \int_0^t B(s) &&\text{in } L^\infty(0,T;H^1_0(\Omega)).
\end{align}
\end{subequations}
\textbf{[Improved convergence]:} If, in addition, the inverse mapping $\Phi^{-1}$ is $\a$-H\"older continuous for some $\a\in (0,1)$, then one has
\begin{align}\label{eq:L_conv_2}
b(s^i) &\longrightarrow b(s)
\quad &&\text{in } L^{1 + 1/\a}(Q).
\end{align}

\textbf{[Contraction]:}
If \( \ell := \inf_{r\in\mathbb{R}} \min\{b'(r),\, B'(r)\} > 0 \), then the iteration error is contractive,
and the iterative scheme converges linearly in ${L^2(Q)}$. 
\end{theorem}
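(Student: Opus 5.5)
We compare the scheme with the weak solution and test with a time-integrated error. Set $e^i:=s^i-s$. Subtracting \eqref{eq:weak form} from \eqref{eq:linearization_gen} gives, for all $\varphi\in L^2(0,T;H^1_0(\Om))$,
\begin{align*}
\int_0^T\!\left\langle \p_t\big(L(e^{i+1}-e^i)+b(s^i)-b(s)\big),\varphi\right\rangle+\int_0^T\!\left(\nabla\big(L(e^{i+1}-e^i)+B(s^i)-B(s)\big),\nabla\varphi\right)=0 .
\end{align*}
Since the $\p_t$ terms only control $H^{-1}$ quantities, we avoid testing with $e^{i+1}$ directly. Instead we use the ``time-integrated'' test function: for fixed $t\in(0,T]$, take $\varphi=\chi_{(0,t)}\,G$, where $G(\tau)\in H^1_0(\Om)$ solves $-\Delta G(\tau)=e^{i+1}(\tau)$ (the Green's function construction of \Cref{lemma:resH}).

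This choice is legitimate because $e^{i+1}(0)=0$ and $b(s^i(0))=b(s(0))$. Integrating by parts in time, the time-derivative part turns into $L^2$ pairings:
\begin{itemize}
\item The $L\p_t(e^{i+1}-e^i)$ term produces $\tfrac{L}{2}\|e^{i+1}(t)\|_{H^{-1}}^2$, up to the usual polarization with $e^i$.
\item The $\p_t(b(s^i)-b(s))$ term becomes $\langle b(s^i)-b(s),e^{i+1}\rangle_{H^{-1}}$.
\item The diffusion part becomes $\int_0^t(L(e^{i+1}-e^i)+B(s^i)-B(s),\,e^{i+1})$ in $L^2$.
\end{itemize}
Equivalently, and perhaps more cleanly, we can integrate the equation in time first. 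Writing $\mathcal{I}v:=\int_0^\cdot v$, the equation becomes an $H^{-1}$-valued identity, which we test with $e^{i+1}$ and integrate in time. In either form, the core algebraic step is to write $e^{i+1}=e^i+(e^{i+1}-e^i)$ and use the polarization identity $2(a-b,a)=|a|^2-|b|^2+|a-b|^2$.

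The monotonicity input then comes from the structure of $b$ and $B$. We use $b[s^i,s],B[s^i,s]\in[0,1]$ from \eqref{eq:condition}, which give
\begin{align*}
(B(s^i)-B(s))e^i\ge |B(s^i)-B(s)|^2 \quad\text{and}\quad (b(s^i)-b(s))e^i\ge |b(s^i)-b(s)|^2 .
\end{align*}
The cross terms $(B(s^i)-B(s))(e^{i+1}-e^i)$ are absorbed by Young's inequality into $\tfrac{L}{2}\|e^{i+1}-e^i\|^2$ and $\tfrac{1}{2L}|B(s^i)-B(s)|^2$; the analogous absorption is done for $b$ in the $H^{-1}$ norm. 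This step is where the condition $L<2$ is needed: the leftover coefficient $1-\tfrac{1}{L}\cdot\tfrac{L}{2}\cdot(\cdots)$ must stay positive.

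The target is a telescoping energy inequality of the form
\begin{align*}
E^{i+1}+c\Big(\|e^{i+1}-e^i\|_{L^2(Q)}^2+\|b(s^i)-b(s)\|_{L^2(0,T;H^{-1})}^2+\sup_t\|\mathcal{I}(B(s^i)-B(s))(t)\|^2\Big)\le E^i ,
\end{align*}
with $E^i:=\tfrac{L}{2}\big(\|e^i\|_{L^2(Q)}^2+\sup_t\|\nabla\mathcal{I}\,\cdot\|^2\big)$-type quantities. Summing over $i$ then shows the bracketed terms are summable, which yields the first three convergences in \eqref{eq:L_conv_1}. For the fourth, we observe that $L(s^i-s^{i-1})+B(s^{i-1})$ is exactly the flux quantity in the integrated equation. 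Combining the integrated equation with the already-established convergence of $b(s^{i})$ controls $\nabla\mathcal{I}$ of it in $L^\infty(0,T;L^2)$.

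For the \textbf{improved convergence}, Hölder continuity of $\Phi^{-1}$ gives $|b(s^i)-b(s)|^{1+1/\a}\lesssim (b(s^i)-b(s))(B(s^i)-B(s))$ on the nondegenerate range; the standard $b$–$B$ argument of \cite{javed2025robust} extends this interpolation to all $s$. The right-hand side is then controlled by the dissipation term via the $H^{-1}$/$H^1_0$ duality between $b(s^i)-b(s)$ and the time-integrated $B$ difference, giving \eqref{eq:L_conv_2}.

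For the \textbf{contraction}, if $\ell>0$ we have $b[s^i,s],B[s^i,s]\ge\ell$. Testing the untransformed equation with $e^{i+1}$ and using $\ell$-strong monotonicity gives
\begin{align*}
\|e^{i+1}\|_{L^2(Q)}\le \rho\,\|e^i\|_{L^2(Q)}, \qquad \rho=\rho(L,\ell)<1,
\end{align*}
in an appropriate energy norm equivalent to $L^2(Q)$ (regularity in $\calW$ justifies this testing).

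The main obstacle is the first step: choosing the test function so that the $\p_t b$ and $\Delta B$ nonlinearities simultaneously produce signed terms. Time-integration puts $B$ into $L^2$ pairing with $e$, but turns $b$ into an $H^{-1}$ pairing, and it is not obvious that the $H^{-1}$ cross term can be absorbed with a constant compatible with $L<2$. If this fails, the fallback is to weight the two parts separately, using $b+B\ge$ identity from \eqref{eq:condition} to obtain control of $\|e^i\|$ from the sum of the two dissipations.
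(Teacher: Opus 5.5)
\textbf{There is a genuine gap: the test function.} Your skeleton matches the paper's: a Green's-function $H^{-1}$ energy estimate, polarization, telescoping, then reading off the convergences. But the step that carries the whole proof, the choice of test function, is wrong, and your last paragraph concedes it is open.

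Testing with $G=(-\Delta)^{-1}e^{i+1}$, or testing the time-integrated equation with $e^{i+1}$, cannot work, for three reasons.
\begin{enumerate}
\item For $s\in\calS$ only $\p_t b(s)$ exists in $L^2(0,T;H^{-1}(\Om))$, not $\p_t s$. So splitting $\p_t\big(L(e^{i+1}-e^i)\big)$ into $L\p_t e^{i+1}-L\p_t e^i$ to produce $\tfrac L2\|e^{i+1}(t)\|_{H^{-1}}^2$ is not legitimate.
\item Even formally, the leftover $\int\langle \p_t(-Le^i+e_b^i),G\rangle$ is not a polarization. Integrating by parts in time puts $\p_t e^{i+1}$ against $e^i$ and $e_b^i$, and nothing controls these terms. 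In particular, your second bullet's $\langle b(s^i)-b(s),e^{i+1}\rangle_{H^{-1}}$ is incorrect.
\item The integrated variant leaves $(\nabla\mathcal{I}(L\delta+e_B^i),\nabla e^{i+1})$, which is equally unsigned.
\end{enumerate}
The same objection defeats your contraction step. There $e^{i+1}$ is not even an admissible test function, because only $B(s)$, not $s$, lies in $L^2(0,T;H^1_0(\Om))$; the $\calW$-regularity you invoke belongs to the iterates, not to $s$. Moreover, $\nabla(B(s^i)-B(s))\neq B[s^i,s]\nabla e^i$.

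\textbf{The missing idea.} Test with the Riesz representative of the \emph{entire} quantity under $\p_t$, which is the only time-differentiable combination available. Set $\delta:=s^{i+1}-s^i$, $e_b^i:=b(s^i)-b(s)$ and $e_B^i:=B(s^i)-B(s)$. The time-integrated error equation gives
\[
\mathcal{G}^{i+1}(t):=(-\Delta)^{-1}\big(L\delta+e_b^i\big)(t)=-\int_0^t\big(L\delta+e_B^i\big).
\]
Testing with $\chi_{(0,\tilde t)}\mathcal{G}^{i+1}$ turns the time term into the exact quantity $\tfrac12\|\nabla\mathcal{G}^{i+1}(\tilde t)\|^2$. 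It turns the diffusion term into the pointwise product $\int_0^{\tilde t}(L\delta+e_b^i,L\delta+e_B^i)$, in which the two nonlinearities meet. Now write $e_b^i=b[s^i,s]e^i$, $e_B^i=B[s^i,s]e^i$ and $e^i\delta=\tfrac12(|e^{i+1}|^2-|e^i|^2-|\delta|^2)$. This gives a pointwise quadratic form whose coefficients are controlled by $0\le b[s^i,s],B[s^i,s]\le 1\le b[s^i,s]+B[s^i,s]$:
\begin{itemize}
\item $L(b+B)\ge L$;
\item $2L^2-L(b+B)\ge 2L(L-1)$;
\item $L(b+B-bB)=L-L(1-b)(1-B)\le L$.
\end{itemize}
The result is
\begin{align*}
&L\int_0^{\tilde t}\|e^{i+1}\|^2+\|\nabla\mathcal{G}^{i+1}(\tilde t)\|^2+(2-L)\int_0^{\tilde t}\!\!\int_\Om e_b^i e_B^i\\
&\qquad+L(L-1)\int_0^{\tilde t}\|\delta\|^2\le L\int_0^{\tilde t}\|e^i\|^2 .
\end{align*}
So $L<2$ enters through the sign of the product term, not through a Young absorption.

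\textbf{Consequences for the remaining claims.} The dissipation $\int_Q e_b^ie_B^i$ is absent from your energy, and it is exactly what the other parts need.
\begin{itemize}
\item Improved convergence: $|e_b^i|^{1+1/\a}\lesssim e_b^ie_B^i$ holds pointwise (via $B=\Phi\circ b$ and H\"older continuity of $\Phi^{-1}$). No duality argument is needed.
\item Contraction: $e_b^ie_B^i=b[s^i,s]B[s^i,s]\,|e^i|^2$ is bounded below by a positive multiple of $|e^i|^2$ when $\ell>0$. Inserting this into the same inequality gives the contraction.
\item Fourth convergence: this comes directly from $\|\nabla\mathcal{G}^{i+1}(t)\|=\|(L\delta+e_b^i)(t)\|_{H^{-1}(\Om)}\to0$ for each $t$, obtained by summing the inequality. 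Convergence of $b(s^i)$ in $L^2(0,T;H^{-1}(\Om))$, which you propose to use, is only an $L^2$-in-time statement. It cannot give control at a fixed $t$, let alone in $L^\infty$.
\end{itemize}
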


First observe that $\|s^{i+1}-s^{i}\| \longrightarrow 0$ in \eqref{eq:L_conv_1a} does not imply the convergence of $s^i$ to $s$. However, it is used in the proofs for the other convergence results; therefore, we give it here explicitly. Also, observe that the H\"older continuity of $\Phi^{-1}$ holds if $\Phi$ has a porous medium equation type growth, i.e., $\Phi(u)\sim u^m$ for some $m\geq 1$. In this case, $\a=1/m$ since 
\[
|u_1-u_2|^m \leq |u_1^m-u_2^m|\lesssim |\Phi(u_1)-\Phi(u_2)|\;\; \text{ or }\;\; |\Phi^{-1}(w_1)-\Phi^{-1}(w_2)|\lesssim |w_1-w_2|^{1/m}.
\]
This covers all the cases in \Cref{fig:RichVsBio_phi}.
We prove \Cref{theo:converegence} below.  

\subsection{Convergence Analysis}\label{sec:convergence-proof}

We give here the proof for \Cref{theo:converegence}. To begin, we introduce the errors at the $i$-th iterate
\begin{align}\label{eq:def_errors_a}
e_s^i := s^i - s, \quad e_b^i := b(s^i) - b(s), \quad e_B^i := B(s^i) - B(s).
\end{align}
Subtracting ~\eqref{eq:linearization_gen} from ~\eqref{eq:weak form}, and rearranging the resulting terms, yields
\begin{align}\label{eq:linearized_error_form_eq}
\int_0^T \left\langle \partial_t \left( L(e_s^{i+1} - e_s^i) + e_b^i \right), \varphi \right\rangle \,  
+ \int_0^T \left( \nabla \left( L\left(e_s^{i+1} - e_s^i\right) + e_B^i \right), \nabla \varphi \right) \, = 0.
\end{align}
Observe that $e^i_s(0) = e^i_b(0) = 0$ for any $i \in \mathbb{N}$. With $\chi_{(0,t)}$ denoting the characteristic function of $(0,t)$, for any $t \in (0,T]$ one can take in \eqref{eq:linearized_error_form_eq} $\varphi = \chi_{(0,t)}\psi$ with $\psi \in H^1_0(\Omega)$, using $(L(e^{i+1}_s -e^i_s) + e^i_b)(0)=0$ and $\int_0^t \p_t(L(e^{i+1}_s -e^i_s) + e^i_b)(\t)\,\dd \t= (L(e^{i+1}_s -e^i_s) + e^i_b)(t)$, to obtain
\begin{align}\label{eq:time-integrated-error}
\left\langle \left( L(e_s^{i+1} - e_s^i) + e_b^i \right)(t), \psi \right\rangle \,  
+ \left( \nabla \int_0^t \left( L\left(e_s^{i+1} - e_s^i\right) + e_B^i \right), \nabla \psi \right) \, = 0.
\end{align}
Therefore, the Riesz representative of $(L(e^{i+1}_s -e^i_s) + e^i_b)(t)$ is
\begin{align}\label{eq:green_def_new}
    \mathcal{G}^{i+1}(t) 
    := -\int_0^{t} 
    \bigl(L(e_s^{i+1} - e_s^i)(\tau) + e_B^i(\tau)\bigr)
    \,\dd\tau.
\end{align}
From \eqref{eq:weak_poisson}, one gets for a.e.\ $t\in (0,T]$
\begin{align}\label{eq:green_riesz}
\bigl(\nabla\mathcal{G}^{i+1}(t), \nabla\psi\bigr) = 
\bigl\langle L(e_s^{i+1}-e_s^i)(t)+e_b^i(t), \psi
\bigr\rangle, 
\end{align}
for all $ \psi\in H_0^1(\Omega)$, and $\|\nabla\mathcal{G}^{i+1}(t)\| =  \|L(e_s^{i+1}-e_s^i)(t)+e_b^i(t)\|_{H^{-1}(\Omega)}$. Furthermore, since $L(e_s^{\,i+1}-e_s^{\,i}) + e_b^{\,i}\in H^1\big(0,T;H^{-1}(\Omega)\big)$, one has $\mathcal{G}^{\,i+1} \in H^1\big(0,T;H_0^1(\Omega)\big)$.

The remaining part of the proof is divided into three steps.

\textbf{(Step 1) An inequality involving successive iterations:} Inserting $\varphi =\chi_{(0,\tilde{t})} \mathcal{G}^{i+1}$ into~\eqref{eq:linearized_error_form_eq}  gives
\begin{align}\label{eq:Main_new}
\int_0^{\tilde{t}} \left\langle \partial_t \left( L(e_s^{i+1} - e_s^i) + e_b^i \right), \mathcal{G}^{i+1} \right\rangle \,  + \int_0^{\tilde{t}} \left( \nabla \left( L\left(e_s^{i+1} - e_s^i\right) + e_B^i \right), \nabla \mathcal{G}^{i+1} \right) = 0.
\end{align}
Denoting the terms on the left by $T_1$ and $T_2$ respectively, one has
\begin{align}
 T_1 &:= \int_0^{\tilde{t}} \left\langle \partial_t \left( L(e_s^{i+1} - e_s^i) + e_b^i \right), \mathcal{G}^{i+1} \right\rangle \overset{\eqref{eq:green_riesz}}{=} 
\int_0^{\tilde{t}} (\nabla \partial_t \mathcal{G}^{i+1}, \nabla \mathcal{G}^{i+1})= \frac{1}{2} \int_0^{\tilde{t}} \frac{d}{dt} \big\|\nabla \mathcal{G}^{i+1}(t)\big\|^2\,dt \nonumber\\
&= \frac{1}{2}\big\|\nabla \mathcal{G}^{i+1}({\tilde{t}})\big\|^2\overset{\eqref{eq:green_riesz}}{=}\frac{1}{2} \big\|(L(e_s^{i+1}-e_s^i)+e_b^i)({\tilde{t}})\big\|_{H^{-1}(\Omega)}^2.\label{T_1_energy_new}
\end{align}
In the above, we used  $\mathcal{G}^{i+1}(0)=0$ that follows from~\eqref{eq:green_def_new}.
We now consider the second term in \eqref{eq:Main_new}, expanded as
\begin{align*}
T_2 & := \int_0^{\tilde{t}} \left( \nabla \left( L\left(e_s^{i+1} 
- e_s^i\right) + e_B^i \right), \nabla \mathcal{G}^{i+1} \right) \overset{\eqref{eq:green_riesz}}{=} 
\int_0^{\tilde{t}} \Bigl( L\left(e_s^{i+1} - e_s^i\right) + e_b^i,\, 
L\left(e_s^{i+1} - e_s^i\right) + e_B^i \Bigr)\\
&= L^2 \int_0^{\tilde{t}} \| e_s^{i+1} - e_s^i \|^2
+L \int_0^{\tilde{t}} \left( e_B^i +e_b^i, \,
e_s^{i+1} - e_s^i\right) 
+ \int_0^{\tilde{t}} \left( e_B^i, e_b^i \right) \\
&\overset{\eqref{eq:def_errors_a},\eqref{eq:def_errors_b}}{=} 
L^2\int_0^{\tilde{t}} \|e_s^{i+1} - e_s^i\|^2 +L \int_0^{\tilde{t}} \bigl((b[s^i,s]+B[s^i,s])e_s^i,\,
e_s^{i+1} - e_s^i\bigr)
+\int_0^{\tilde{t}} \bigl(e_s^i b[s^i,s],\,e_s^i B[s^i,s]\bigr).
\end{align*}
Applying the elementary identity $a(a-b)=\tfrac{1}{2}(a^2-b^2+(a-b)^2)$, with $a=e_s^{i+1}$ and $b=e_s^{i}$ to the second term (mixed product), one gets after combining terms
\begin{align}\label{eq:T2_rewritten_new}
T_2
&= \frac{L}{2}\int_0^{\tilde{t}} \int_\Omega 
    (b[s^i,s] + B[s^i,s])\, |e_s^{i+1}|^2\,dx\,dt\nonumber\\ 
  &\quad      
  +\frac{L}{2}\int_0^{\tilde{t}} \int_\Omega 
    \big( 2L - (b[s^i,s] + B[s^i,s]) \big)\, 
    |e_s^{i+1} - e_s^i|^2\,dx\,dt       \nonumber\\ 
  &\quad 
  + \frac{1}{2}\int_0^{\tilde{t}} \int_\Omega 
    \big( 2\, b[s^i,s]\, B[s^i,s] 
    - L(b[s^i,s] + B[s^i,s]) \big)\,
    |e_s^i|^2\,dx\,dt.
\end{align}
Inserting~\eqref{T_1_energy_new}--\eqref{eq:T2_rewritten_new} 
into~\eqref{eq:Main_new} and multiplying by $2$ gives
\begin{align}\label{eq:Main_equality_new}
    &\left\| \nabla \mathcal{G}^{i+1}({\tilde{t}})\right\|^2
    + \int_0^{\tilde{t}} \int_\Omega L \left( b[s^i,s] + B[s^i,s] \right) \left| e_s^{i+1} \right|^2 \nonumber \\
    &+ \int_0^{\tilde{t}} \int_\Omega \left( 2L^2 - L\left( b[s^i,s] + B[s^i,s] \right) \right) 
    \left| e_s^{i+1} - e_s^i \right|^2 
    + (2-L)\int_0^{\tilde{t}} \int_\Omega  b[s^i,s] B[s^i,s]  \left| e_s^i \right|^2 \nonumber \\
    &= \int_0^{\tilde{t}} \int_\Omega L\left( b[s^i,s] + B[s^i,s]-  b[s^i,s] B[s^i,s] \right) \left| e_s^i \right|^2.
\end{align}
We identify three coefficient functions 
in~\eqref{eq:Main_equality_new} as
\begin{subequations}\label{eq:P-coeff_new}
\begin{align}
P_1^i
&:=  L \left( b[s^i,s] + B[s^i,s] \right)
\overset{\eqref{eq:def_errors_b}}{=}
L \frac{(b + B)(s^i) - (b + B)(s)}{s^i - s}
\overset{\eqref{eq:condition}}{\geq} L, \\
P_2^i
&:= 2L^2 - L\left( b[s^i,s] + B[s^i,s] \right)
\overset{\eqref{eq:condition}}{\geq} 2L(L-1), \\
P_3^i
&:= L\left( b[s^i,s] + B[s^i,s]-  b[s^i,s] B[s^i,s] \right) = L - L(1 - b[s^i, s])(1 - B[s^i, s]) \overset{\eqref{eq:condition}}\in [0,L]. 
\end{align}
\end{subequations}
Using~\eqref{eq:P-coeff_new} in~\eqref{eq:Main_equality_new}, one has the following inequality, which is fundamental for convergence
\begin{align}\label{eq:final_estimate_new}
    &L \int_0^{\tilde{t}} \|e_s^{i+1}\|^{2}
    +\left\| \nabla \mathcal{G}^{i+1}({\tilde{t}})\right\|^2+
    (2-L)\int_0^{\tilde{t}} \int_\Omega e^i_B e^i_b\,dx\,dt + L(L-1)\int_0^{\tilde{t}} \|s^{i+1} -s^{\,i}\|^{2}\,dt
    \nonumber \\ 
    &\le\;
    L\int_0^{\tilde{t}} \|e_s^{\,i}\|^{2}\,dt.
\end{align}
\textbf{(Step 2) Proof of \eqref{eq:L_conv_1}:} 
Adding ~\eqref{eq:final_estimate_new} for $i=0$ to $N$, one has
\begin{align}\label{telescoped_estimate}
&L \int_0^{\tilde{t}} \|e_s^{N+1}\|^2\,dt + 
\sum_{i=0}^{N} 
\left\| \nabla \mathcal{G}^{i+1}({\tilde{t}})\right\|^2
+ (2-L)\sum_{i=0}^{N} 
\int_0^{\tilde{t}} \int_\Omega e_B^i\, e_b^i\,dx\,dt + L(L-1)\sum_{i=0}^{N} 
\int_0^{\tilde{t}} \|s^{i+1}-s^i\|^2\,dt
\nonumber\\
&\le L \int_0^{\tilde{t}} \|e_s^0\|^2\,dt.
\end{align}
Since ${\tilde{t}} \in (0,T]$ was chosen arbitrarily, and $\mathcal{G}^{i+1} \in  H^1(0,T;H^1_0(\Omega))$, we immediately get that for every $i\in \mathbb{N}$ and ${\tilde{t}} \in [0,T]$,
\begin{align}
    \left\| \nabla \mathcal{G}^{i}(\tilde{t})\right\|^2 \le L \int_0^{T} \|e_s^0\|^2\,dt,
\end{align}
so the sequence $\{\mathcal{G}^{i}\}_{i \in \mathbb{N}}$ has an equibounded norm in $L^{\infty}(0,T;H^1_0(\Omega))$. This immediately implies the same for the norm in $L^{\infty}(0,T;H^{-1}(\Omega))$ of $\{L(s^{i+1}-s^i) + e_b^i\}_{i \in \mathbb{N}}$. 

Further~\eqref{telescoped_estimate} implies the absolute convergence of the series, yielding the  convergences
\begin{align}\label{eq:inq_new}
&\|\left(L(s^{i+1} - s^i) + e_b^i\right)({\tilde{t}})
\|^2_{H^{-1}(\Omega)} = \left\|\nabla \int_0^{\tilde{t}} (L(s^{i+1} - s^i) + e_B^i)(\t)\, d\t\right\|= \left\| \nabla \mathcal{G}^{i+1}({\tilde{t}})\right\|\to 0,
\nonumber\\
&\int_0^{\tilde{t}} \|s^{i+1}-s^{i}\|^2\,dt \to 0, \qquad\text{and}\qquad
\int_0^{\tilde{t}}\int_\Omega e_b^i\,e_B^i\,dx\,dt \to 0,
\end{align}
as $i\to\infty$. Since $\tilde{t} \in (0,T]$ was chosen arbitrarily, one gets the convergence
\begin{align}
\|s^{i+1} - s^i\|_{L^2(Q)} \to 0 \quad \text{as } i \to \infty.
\end{align}
To prove that $e_b^i$ vanishes as $i \to \infty$, we recall the equiboundedness in $L^{\infty}(0,T;H^{-1}(\Omega))$ of 
$L(s^{i+1}-s^i) + e_b^i$. Using that for every $\tilde{t}\in (0,T]$ one has $\left\|\left(L(s^{i+1}-s^i) + e_b^i\right)(\tilde{t})\right\|_{H^{-1}(\Omega)} \to 0 \quad \text{as } i \to \infty$, the dominated convergence theorem, together with the equiboundedness, guarantees that 
\begin{align}
\left\|L(s^{i+1}-s^i) + e_b^i\right\|_{L^2(0,T;H^{-1}(\Omega))} 
\to 0 \quad \text{as } i \to \infty.
\end{align}
With this, we apply the triangle inequality and \eqref{eq:defHinv_norm} to obtain
\begin{align}
&\|e_b^i\|_{L^2(0,T;H^{-1}(\Omega))}
\le 
\|L(s^{i+1}-s^i)+e_b^i\|_{L^2(0,T;H^{-1}(\Omega))}
+L\,\|s^{i+1}-s^i\|_{L^2(0,T;H^{-1}(\Omega))} \nonumber\\
&\quad \leq \|L(s^{i+1} - s^i) + e_b^i\|_{L^2(0,T;H^{-1}(\Omega))} 
\overset{\eqref{eq:defHinv_norm}}{+}
LC_{\,\!_{\Omega}}^{\mathrm{P}} \, h_\Omega\,
\|s^{i+1}-s^i\|_{L^2(Q)}.\label{eq:b_convergence}
\end{align}
Since both terms on the right are vanishing  as $i \to \infty$, one gets that

\begin{align}
    \|e_b^i\|_{L^2(0,T;H^{-1}(\Omega))}  \;\to\; 0,
\qquad\text{i.e.,}\qquad
b(s^i)\to b(s) \quad \text{in } L^2(0,T;H^{-1}(\Omega)).\label{eq:bconv_new}
\end{align}
Next, by \eqref{eq:inq_new} one gets that for every $t\in(0,T]$,
\begin{align}
\left\|\nabla \int_0^t \left(L(s^{i+1}-s^i)+e_B^i\right)(\tau)\,d\tau\right\| \to 0 \quad \text{as } i \to \infty.
\end{align}
By Poincar\'e's inequality,
\begin{align}
\left\|\int_0^t \left(L(s^{i+1} - s^i) + e_B^i\right)(\tau)
\,d\tau\right\| \to 0 \quad \text{as } i \to \infty.
\end{align}
Further, since $\|s^{i+1}-s^i\|_{L^2(Q)} \to 0 \quad \text{as } i \to \infty$, one immediately gets that
\begin{align}
 \left\|\int_0^t (s^{i+1}-s^i)(\tau)\,d\tau\right\|_{L^2(\Omega)}\to 0 \quad \text{as } i \to \infty.  
\end{align}
 Using the triangle inequality again gives the convergence
\begin{align}
    \int_0^t B(s^i)\,d\tau \;\longrightarrow\; \int_0^t B(s)\,d\tau,
\end{align}
in $L^2(\Omega)$, for every $t\in (0,T]$. This completes the proof of \eqref{eq:L_conv_1}.


\textbf{(Step 3) Proving \eqref{eq:L_conv_2} and the contractive behaviour:} 
From  \eqref{telescoped_estimate} one also gets that
\[
\int_Q (B(s^i)-B(s))(b(s^i)-b(s)) 
= \int_Q e^i_B e^i_b \longrightarrow 0.
\]
However, due to the $\alpha$-H\"older continuity of $\Phi^{-1}$ one has
\begin{align*}
    &\int_Q |b(s^i)-b(s)|^{1+1/\alpha}
    \lesssim \int_Q |b(s^i)-b(s)|
    |\Phi^{-1}(b(s_1)) - \Phi^{-1}(b(s_2))|\\
    &= \int_Q (b(s^i)-b(s))(B(s^i)-B(s)) 
    \longrightarrow 0.
\end{align*}
This shows the strong convergence of $b(s^i)$ to $b(s)$ in $L^{1+1/\alpha}$. Finally, if $\ell:= \inf\{B',b'\}>0$, then we have 
\[
\int_Q (B(s^i)-B(s))(b(s^i)-b(s))
\geq \ell \int_Q \|e^i_s\|^2.
\]
Inserting this into~\eqref{eq:final_estimate_new} we have
\begin{align}
    &\int_0^T \|s^{i+1} -s\|^{2}
    +\frac{1}{L}\left\| \nabla \int_0^T\left (L\big(s^{i+1} - s^i\big) + B(s^i)-B(s)\right)\right\|^{2}\leq 
    \left (1-\ell \left(\frac{2-L}{L}\right)\right)\int_0^T \|s^{\,i}-s\|^{2}.
\end{align}
This shows that the iterative method is contracting the $L^2(Q)$ error in $s$ for every iteration step.


\subsubsection{Numerical study on L-scheme convergence}\label{sec:numerical_validation}
Our goal is to develop an adaptive space-time iterative scheme, based on a posteriori estimates and employing the $L$-type scheme introduced through Problem~\ref{pb:2}. Before proceeding with the adaptivity and a posteriori estimates, we assess the behaviour of this linearization by investigating the decay of the iteration errors. To this end, we compare it with the $N_{\mathrm{reg}}$-scheme
\cite{mitra2019modified,javed2025robust}, which can be seen as a regularized variant of the Newton scheme; see \eqref{eq:M-scheme}. For both schemes, the space-time discretization presented in \Cref{sec:apost} is used on a fixed mesh. The experiments are for a one-dimensional setting with Barenblatt data described in \Cref{sec:1DPME_1DST}.
Figure~\ref{fig:iter-combined} illustrates the evolution of the iteration error with respect to the iteration number for different types of nonlinearities, including degenerate ($b'$, $B'$ vanishes) and non-degenerate cases (when either $b'$ or $B'$ is bounded away from 0). The first row reports the error measured in the norm $\|b(s^{i+1})-b(s^i)\|_{L^2(Q)}$, while the second row shows the error measured in the norm $\|\partial_x(B(s^{i+1})-B(s^i))\|_{L^2(Q)}$. The regularized Newton ($N_{\mathrm{reg}}$) scheme with regularization parameter $\e=0.1$ is employed for the degenerate and single-degenerate cases (left and center columns) since lower values of $\e$ result in the iterations diverging, whereas $\e=0$ is used in the non-degenerate case (right column).

\begin{figure}[h]
\centering

\begin{subfigure}[t]{0.34\textwidth}
  \centering
  \includegraphics[width=\linewidth]{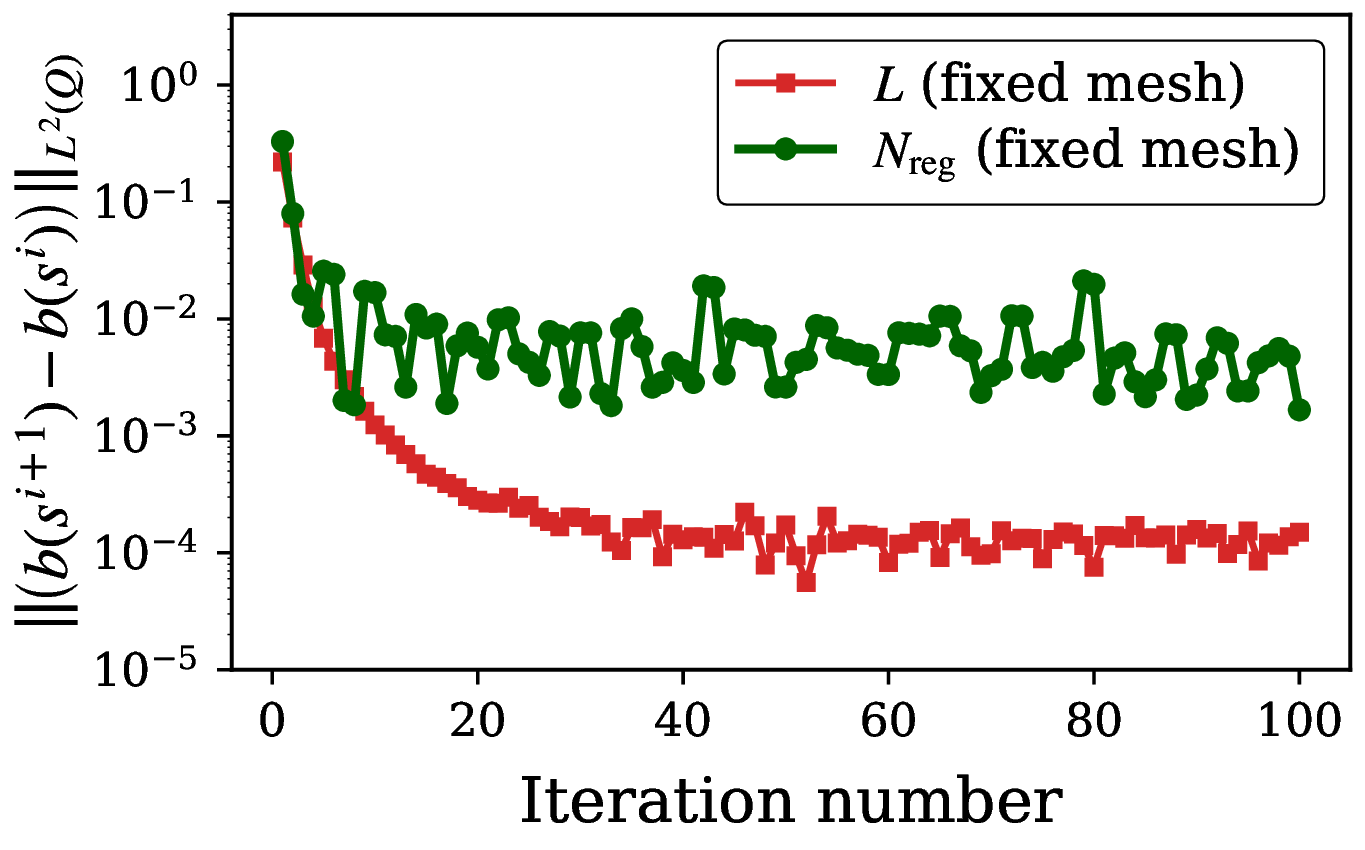}
  \caption{Degenerate}
\end{subfigure}\hfill
\begin{subfigure}[t]{0.33\textwidth}
  \centering
  \includegraphics[width=\linewidth]{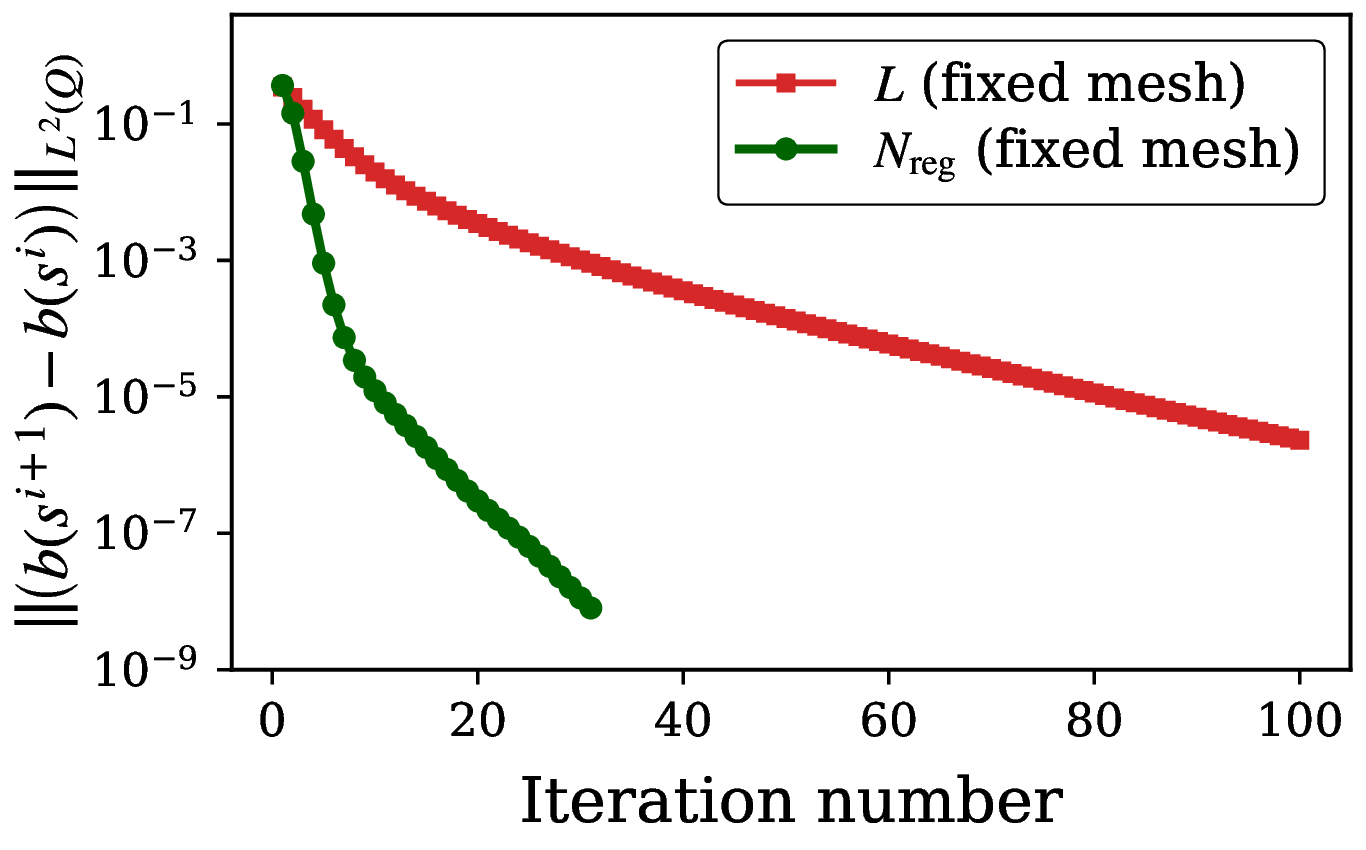}
 \caption{\centering Single-degenerate.}
 \end{subfigure}\hfill
\begin{subfigure}[t]{0.33\textwidth}
  \centering
  \includegraphics[width=\linewidth]{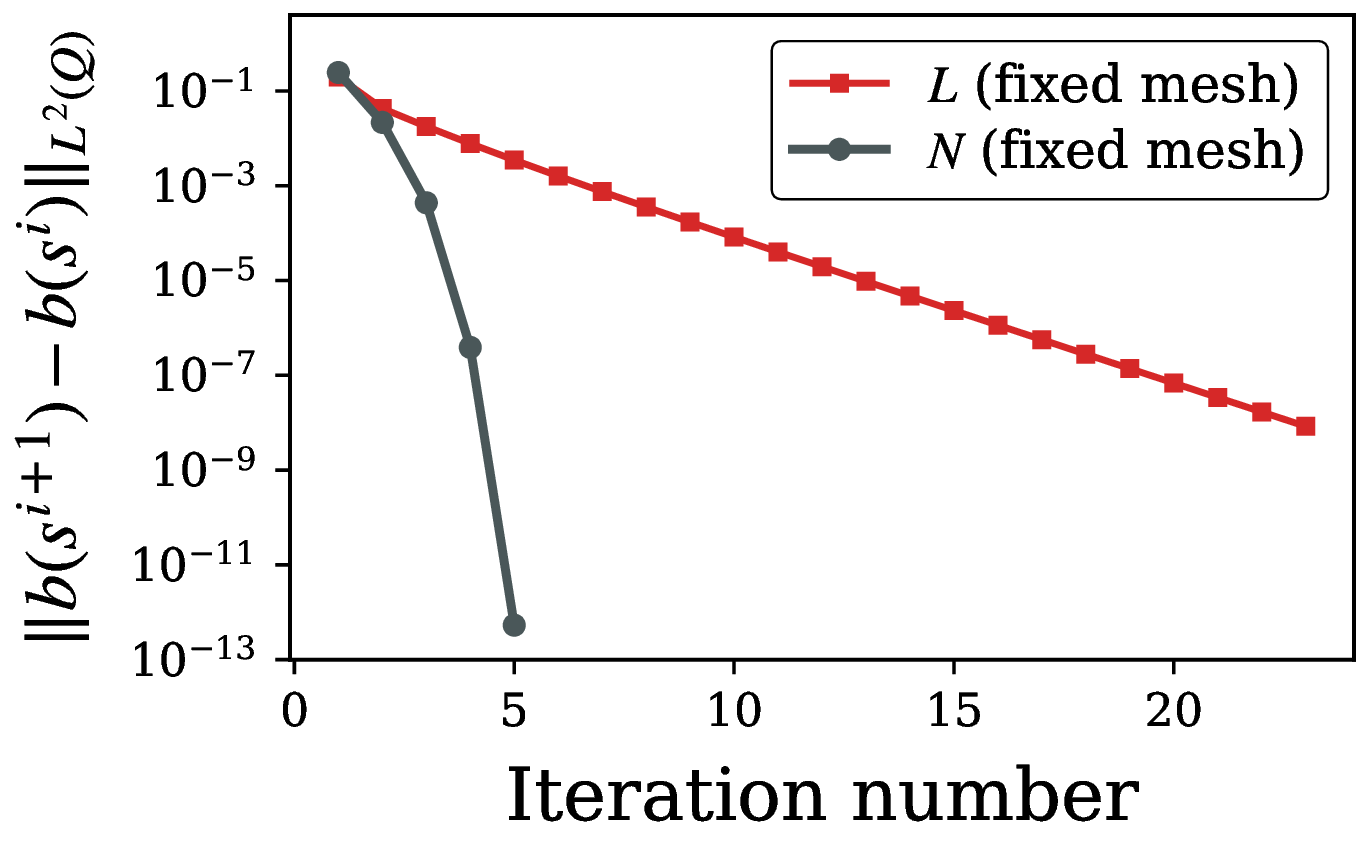}
  \caption{\centering Non-degenerate}
\end{subfigure}
  \begin{subfigure}[t]{0.34\textwidth}
    \centering
    \includegraphics[width=\linewidth]{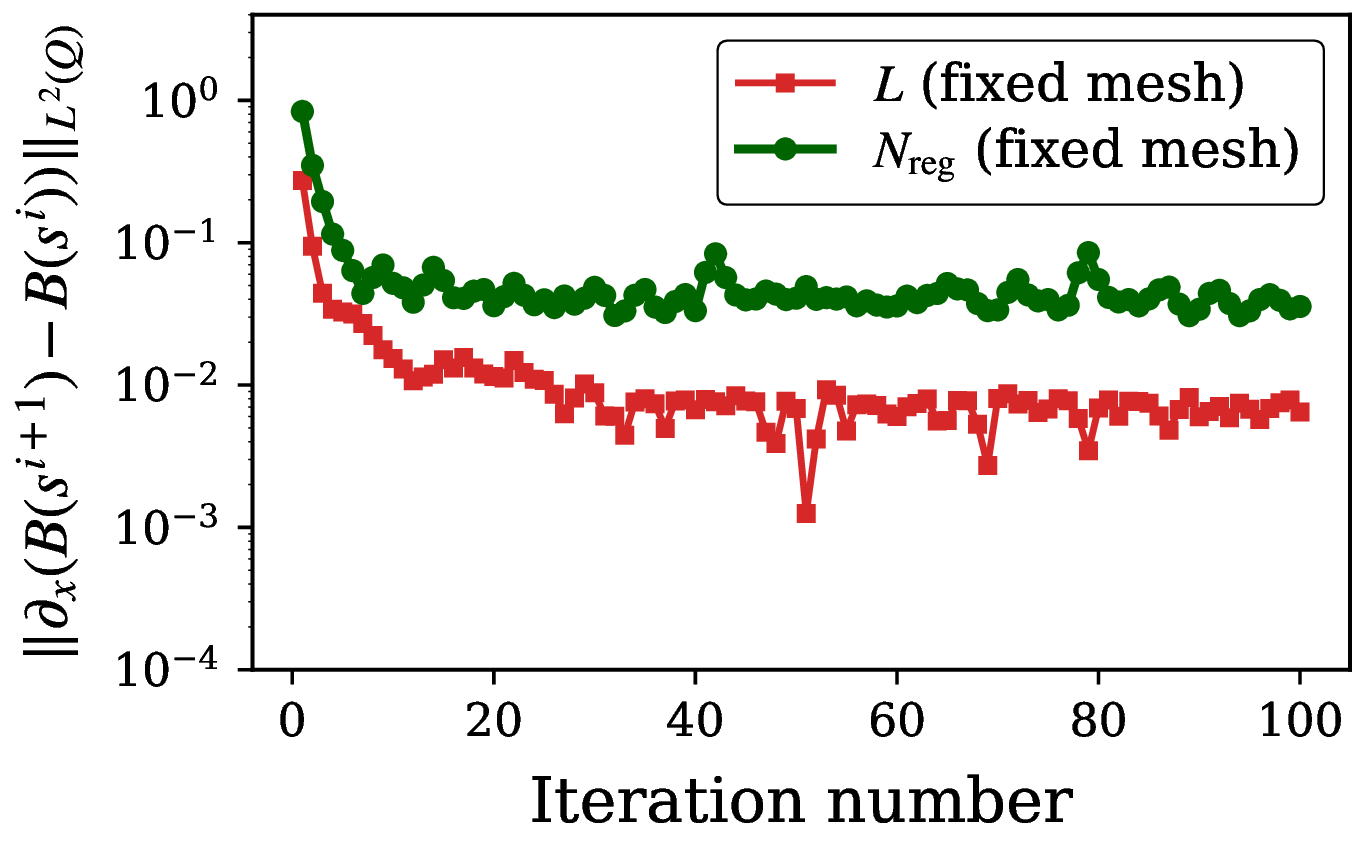}
    \caption{Degenerate }
  \end{subfigure}\hfill
  \begin{subfigure}[t]{0.33\textwidth}
    \centering    \includegraphics[width=\linewidth]{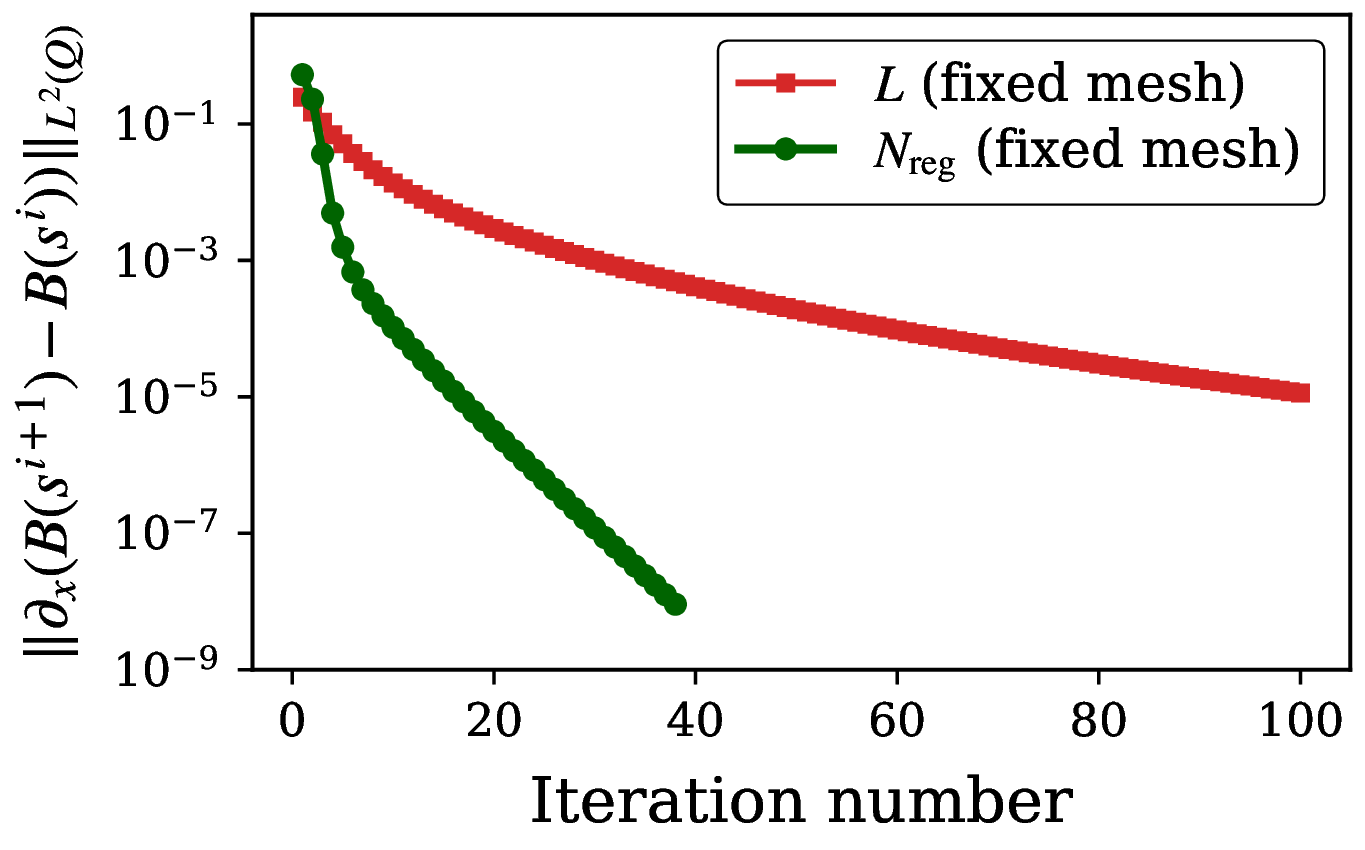}
 \caption{\centering Single-degenerate}
 \end{subfigure}\hfill
  \begin{subfigure}[t]{0.33\textwidth}
    \centering    \includegraphics[width=\linewidth]{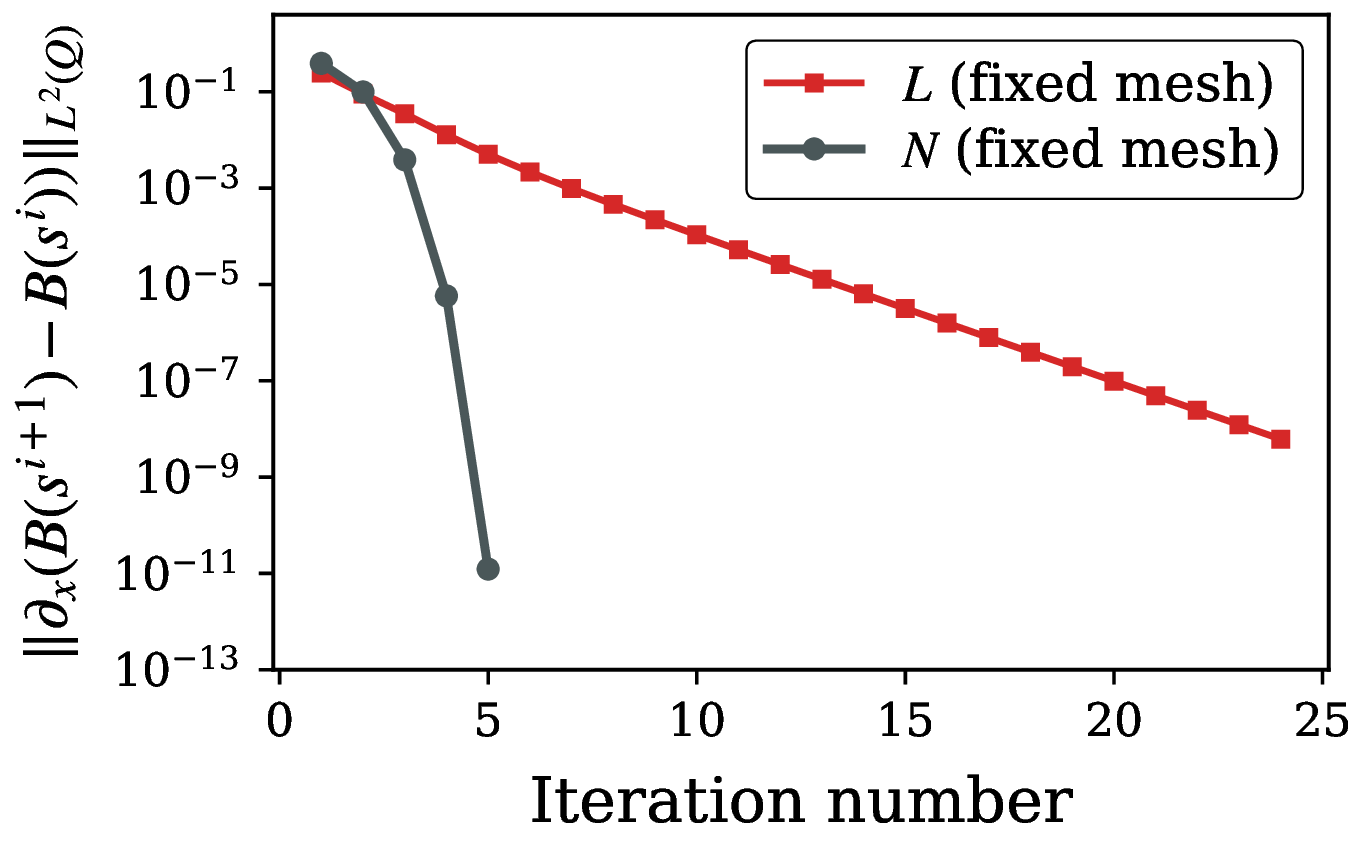}
  \caption{\centering Non-degenerate}
  \end{subfigure}
\caption{[\Cref{sec:numerical_validation}] Decay of the linearization error versus the number of iterations for the \(L\)-scheme, and the regularized Newton scheme \eqref{eq:M-scheme} for a fixed mesh. \textbf{Top row:} error measured as $\|b(s^{i+1})-b(s^i)\|_{L^2(Q)}$. \textbf{Bottom row:} error measured as $\|\partial_x(B(s^{i+1})-B(s^i))\|_{L^2(Q)}$. From left to right, the columns correspond to degenerate (porous medium equation, nonlinearity as in \eqref{eq:bB_PME}), single-degenerate ($B(s)=\frac{1}{2}s^2+\frac{1}{2}s,\; b(s)=s^2$ so that $\inf_{\mathbb{R}} B'>0$), and non-degenerate diffusion ($B(s)=\frac{1}{2}s^2+\frac{1}{2}s, \; b(s)=\frac{1}{6}s^3+\frac{1}{2}s$ so that $\inf_{\mathbb{R}} B',\, \inf_{\mathbb{R}} b'>0$) models. The regularized Newton scheme is used in the degenerate and single-degenerate cases with $\e=0.1$, while the classical Newton scheme ($\e=0$) is used in the non-degenerate case.}
\label{fig:iter-combined}
\end{figure}

For the single-degenerate case when $\inf_{\mathbb{R}} B'>0$ and non-degenerate cases, a significantly faster decay of the iteration error is observed for the regularized Newton ($N_{\mathrm{reg}}$) scheme compared to the \(L\)-scheme. In particular, for the non-degenerate case, the Newton scheme exhibits quadratic convergence (since $\e=0$). However, when \(B'(s)\) vanishes, the error decay for the regularized Newton scheme becomes irregular and eventually stagnates, even in the presence of the regularization parameter \(\e>0\), which appears insufficient to compensate for the vanishing diffusion. In contrast, for the degenerate diffusion case, the \(L\)-scheme exhibits a faster initial decay of the iteration error and reaches a much lower error level before eventually stagnating. Better error behaviour is seen for the error metric  $\|b(s^{i+1})-b(s^i)\|_{L^2(Q)}$ due to convergence \eqref{eq:L_conv_2} predicted in \Cref{theo:converegence}.

Nevertheless, the main strength of the L-scheme comes from the fact that it reduces the nonlinear degenerate problem to solving a set of heat equations. This allows us to combine it with adaptive mesh refinement with precise a posteriori error control obtained through the error-residual relationship derived below.

\section{Decomposition of error into discretization and linearization components}\label{subsec:decomposition}
The main idea in the a posteriori analysis is in the decomposition of the error into two components, linearization and discretization. To do so, with $s \in \cal S$ and $s^{i+1} \in \mathcal{V}$ solving Problems~\ref{eq:reformulated_weak_form} and \ref{pb:2}, respectively, we define $\calE_{\mathrm{lin}}$ and $\calE_{\mathrm{disc}}^{\rm H}$. 

Let $\calR$  and $\calR_{\rm lin}$ be the residuals associated with Problems~\ref{eq:reformulated_weak_form} and \ref{pb:2}, as defined in \eqref{eq:residual} and \eqref{eq:defRlin}, respectively. We define the total error of $s^i$ as a candidate solution to Problem \ref{eq:reformulated_weak_form} as
\begin{align}\label{eq:total_indicator}
\mathcal{E}(s^i) 
:=\left(\left\vert\!\left\vert \mathcal{R}(s^i) \right\vert\!\right\vert\!_{L^2(0,T;H^{-1}(\Omega))} ^2 + L^2 \|s^{i}(0)-s_0\|^2\right)^{\frac{1}{2}}.
\end{align}
Now, for a given $s^i$, the next iteration $s^{i+1}$ is a solution to Problem \ref{pb:2}. It is approximated by a fully discrete one $s^{i+1}_{h\tau}$. The corresponding  discretization error is defined as
\begin{align}\label{eq:disc_error}
\calE_{\mathrm{disc}}^{\rm H}(s^{i+1}_{h\t})
:= \left(\left\vert\!\left\vert \mathcal{R}_{\mathrm{lin}}(s_{h\tau}^{i+1}, s^i) \right\vert\!\right\vert\!_{L^2(0,T;H^{-1}(\Omega))} ^2 + L^2 \|s^{i+1}_{h\t}(0)-s_0\|^2\right)^{\frac{1}{2}}.
\end{align}
Note that, $\calE_{\mathrm{disc}}^{\rm H}$  vanishes if and only if $s^{i+1}_{h\t}$ solves Problem \ref{pb:2}. It can be seen as the discretization error since $s^{i+1}_{h\t}$ is typically a finite dimensional approximation of $s^{i+1}$. This association will become clearer in Proposition~\ref{lemma:SP_Iterative}. Furthermore, the linearization error is defined as 
\begin{align}\label{err:linearization}
\calE_{\mathrm{lin}}^i 
:= \|s^{i+1}_{h\t}- s^i\|_\mathcal{V}.
\end{align}
It inculcates the linearization component, vanishing if the iterations converge (even for a fixed discretization). These errors are related as follows from the following result.

\begin{theorem}[Decomposition of Error]\label{theo:decomp}
Let $L\in (1,2)$. Given $s^i\in (\calS \cap \calV)$, let $s^{i+1}_{h\t}\in \calV$ denote an approximation of $s^{i+1}\in \calV$, the solution to Problem \ref{pb:2}. With the linearization and discretization errors, $\mathcal{E}^i_{\mathrm{lin}}$ and $\mathcal{E}^{\mathrm{H}}_{\mathrm{disc}}$ defined in \eqref{err:linearization} and \eqref{eq:disc_error}, respectively, the total error defined in \eqref{eq:total_indicator} satisfies the reliability and efficiency bounds:
\begin{subequations}
    \begin{align}
\mathcal{E}(s^i)
&\leq \calE_{\mathrm{disc}}^{\mathrm{H}}(s^{i+1}_{h\t}) + L\calE_{\mathrm{lin}}^i
\quad &&\textup{(Reliability)}, \label{reliability_bound}\\
\calE_{\mathrm{disc}}^{\mathrm{H}}(s^{i+1}_{h\t})
&\leq \mathcal{E}(s^i) + L\calE_{\mathrm{lin}}^i
\quad &&\textup{(Efficiency)}.\label{eq:efficiency}
\end{align}
\end{subequations}
\end{theorem}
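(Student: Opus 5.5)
The plan is to exploit the affine relation between the nonlinear residual and the linearized residual, and then bound the resulting heat-equation term with \Cref{lemma:resH}.

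By \eqref{eq:defRlin}, for $s^{i+1}_{h\t}\in\calV$ and $s^i\in\calS\cap\calV$, the residuals differ by
\[
\calR(s^i)=\calR_{\mathrm{lin}}(s^{i+1}_{h\t},s^i)-L\,\resH(s^{i+1}_{h\t}-s^i).
\]
This identity needs $\resH$ to be linear on $\calV$, which it is. Both sides then live in $L^2(0,T;H^{-1}(\Om))$.

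For the reliability bound \eqref{reliability_bound}:
\begin{itemize}
\item Take dual norms and apply the triangle inequality:
\[
\|\calR(s^i)\|_{L^2(0,T;H^{-1}(\Om))}\le \|\calR_{\mathrm{lin}}(s^{i+1}_{h\t},s^i)\|_{L^2(0,T;H^{-1}(\Om))}+L\|\resH(s^{i+1}_{h\t}-s^i)\|_{L^2(0,T;H^{-1}(\Om))}.
\]
\item For the initial term, write $s^i(0)-s_0=(s^{i+1}_{h\t}(0)-s_0)-(s^{i+1}_{h\t}-s^i)(0)$. The triangle inequality in $L^2(\Om)$ then gives
\[
L\|s^i(0)-s_0\|\le L\|s^{i+1}_{h\t}(0)-s_0\|+L\|(s^{i+1}_{h\t}-s^i)(0)\|.
\]
\item Combine the two pieces into pairs $(a_1,a_2)$ and $(b_1,b_2)$ of $\R^2$ and use Minkowski's inequality in $\R^2$. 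This yields
\[
\calE(s^i)\le \calE_{\mathrm{disc}}^{\rm H}(s^{i+1}_{h\t})+L\Big(\|\resH(s^{i+1}_{h\t}-s^i)\|^2_{L^2(0,T;H^{-1}(\Om))}+\|(s^{i+1}_{h\t}-s^i)(0)\|^2\Big)^{1/2}.
\]
\item By \Cref{lemma:resH} applied to $v=s^{i+1}_{h\t}-s^i\in\calV$, the last bracket equals $\|v\|_\calV^2$, so the second term is exactly $L\calE^i_{\mathrm{lin}}$. This gives \eqref{reliability_bound}.
\end{itemize}

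For the efficiency bound \eqref{eq:efficiency}, use the same identity in reverse: $\calR_{\mathrm{lin}}(s^{i+1}_{h\t},s^i)=\calR(s^i)+L\resH(s^{i+1}_{h\t}-s^i)$, together with $s^{i+1}_{h\t}(0)-s_0=(s^i(0)-s_0)+(s^{i+1}_{h\t}-s^i)(0)$. The same Minkowski argument and \Cref{lemma:resH} then give \eqref{eq:efficiency}. Note that neither bound uses the restriction $L\in(1,2)$ or any property of $b,B$; the theorem is a purely algebraic consequence of the affine structure of $\calR_{\mathrm{lin}}$.

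The main obstacle is making sure the norms match exactly. The $\calV$-norm \eqref{eq:norm_V} involves $\|v(T)\|^2$, whereas the error quantities involve the value at time $0$. \Cref{lemma:resH} is precisely what converts $\|\resH(v)\|^2+\|v(0)\|^2$ into $\|v\|_\calV^2$. The other point to check is that the weight $L^2$ on the initial terms in \eqref{eq:total_indicator} and \eqref{eq:disc_error} scales consistently with the factor $L$ in front of $\resH$, which it does. With these verified, the constants come out as exactly $1$ and $L$, with no extra factor such as $\sqrt2$.
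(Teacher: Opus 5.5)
Your proof is correct, but it takes a different route from the paper's.

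\textbf{The paper's route.} It first proves \Cref{lemma:SP_Iterative}. Combining $\mathcal{R}(s^i)=-L\resH(s^{i+1}-s^i)$ and $\mathcal{R}_{\mathrm{lin}}(s^{i+1}_{h\tau},s^i)=L\resH(s^{i+1}_{h\tau}-s^{i+1})$ with \Cref{lemma_2.2} and $s^{i+1}(0)=s_0$, it obtains $\mathcal{E}(s^i)=L\|s^{i+1}-s^i\|_{\mathcal{V}}$ and $\mathcal{E}^{\mathrm{H}}_{\mathrm{disc}}(s^{i+1}_{h\tau})=L\|s^{i+1}-s^{i+1}_{h\tau}\|_{\mathcal{V}}$. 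Both bounds then follow from the triangle inequality in $\mathcal{V}$ applied to $s^{i+1}-s^i=(s^{i+1}-s^{i+1}_{h\tau})+(s^{i+1}_{h\tau}-s^i)$.

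\textbf{Your route.} You never introduce the exact iterate $s^{i+1}$. Instead you apply the triangle inequality in the weighted product space $L^2(0,T;H^{-1}(\Omega))\times L^2(\Omega)$ to the pairs (residual, initial defect). This is legitimate because $\mathcal{R}(s^i)=\mathcal{R}_{\mathrm{lin}}(s^{i+1}_{h\tau},s^i)-L\resH(s^{i+1}_{h\tau}-s^i)$ is simply the definition \eqref{eq:defRlin}, so no linearity of $\resH$ is even needed. You then use \Cref{lemma_2.2} only once, for $v=s^{i+1}_{h\tau}-s^i$.

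\textbf{Comparison.} The two proofs are the same inequality read on either side of the isometry $v\mapsto(\resH(v),v(0))$ that \Cref{lemma_2.2} provides. Yours is leaner and slightly more general. It needs neither the solvability of Problem~\ref{pb:2} nor $s^{i+1}(0)=s_0$; only $s^{i+1}_{h\tau}\in\mathcal{V}$ and $s^i\in\mathcal{S}\cap\mathcal{V}$ enter. The paper's detour through $s^{i+1}$ buys the exact identities of \Cref{lemma:SP_Iterative}. Those identities are what justify reading $\mathcal{E}^{\mathrm{H}}_{\mathrm{disc}}$ as the energy-norm distance between the discrete and exact linearized iterates, i.e.\ as a genuine discretization error.
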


\textit{What makes \Cref{theo:converegence} quite interesting is as follows}: Since $s^i$ and $s^{i+1}_{h\t}$ are known (unlike the infinite dimensional object $s^{i+1}$), $\calE_{\mathrm{lin}}^i$ is easily estimable using \eqref{err:linearization}. In the next section, we will call this estimation $\eta_{\mathrm{lin}}^i$. On the other hand,  $\calE_{\mathrm{disc}}^{\rm H}$ corresponds to estimating the residual of a heat equation. This makes it possible to provide highly accurate, locally-efficient estimates that are robust with respect to the mesh-size, time-step size, and the spatial and temporal polynomial degree; see \cite{ErnSmearsVohralik2017}. We will call this estimator $\eta_{\mathrm{disc}}^{\rm H}$. The details regarding this estimator will be given below.

Now, as the iterations converge, upon meeting an adaptive stopping criteria (i.e., when $\eta_{\mathrm{lin}}^i\ll \eta_{\mathrm{disc}}^{\rm H}$), we can provide \emph{guaranteed, efficient a posteriori error bounds for the double degenerate problem that are not only robust with respect to discretization but also with respect to the nonlinearities}. This is the underlying idea of the adaptive (space-time) linear iterative scheme proposed in this work.

\subsection{Proof of \Cref{theo:decomp}}
The proof relies on the results stated below, which can be obtained straightforwardly from \Cref{lemma_2.2} and \eqref{eq:defRlin}-\eqref{eq:l_b}. First, given $s^i\in (\calS \cap \calV)$, if $s^{i+1}$ solves Problem \ref{pb:2} one has

\[
\calR(s^i) =-L\resH(s^{i+1}-s^i),
\]
and
\[
\calR_{\mathrm{lin}}(s^{i+1}_{h\t},s^i)= L\resH(s^{i+1}_{h\t}-s^i) + \calR(s^i)= L\resH(s^{i+1}_{h\t}- s^{i+1}).
\]
The results below involve the errors introduced in   \eqref{eq:total_indicator}, \eqref{eq:disc_error} and \eqref{err:linearization}.


\begin{proposition}[Equivalent expressions of the dual norms of the residual and linearized residual]\label{lemma:SP_Iterative}
One has the following 
\begin{align}
[\mathcal{E}(s^i)]^2 &=L^2 \|s^{i+1}-s^i\|_{\calV}^2, \\
[\calE_{\mathrm{disc}}^{\rm H}(s^{i+1}_{h\t})]^2 &= L^2  \|s^{i+1} - s^{i+1}_{h\t}\|_{\calV}^2.
\end{align}
\end{proposition}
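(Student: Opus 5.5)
The plan is to combine the two residual identities displayed just before the statement with \Cref{lemma_2.2}, applied to the appropriate difference of functions in $\calV$. The only care needed is with the initial-value terms, where the normalization $s^i(0)=s^{i+1}(0)=s_0$ has to match exactly the term $L^2\|\cdot(0)-s_0\|^2$ in the definitions \eqref{eq:total_indicator} and \eqref{eq:disc_error}.

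\textbf{First identity.} Since $s^{i+1}$ solves Problem~\ref{pb:2}, \eqref{eq:l_b} and \eqref{eq:defRlin} give $\calR(s^i)=-L\resH(s^{i+1}-s^i)$. Taking dual norms in $L^2(0,T;H^{-1}(\Om))$ yields $\|\calR(s^i)\|^2=L^2\|\resH(s^{i+1}-s^i)\|^2$, using linearity of $\resH$ and absolute homogeneity of the norm. Now apply \Cref{lemma_2.2} to $v:=s^{i+1}-s^i\in\calV$. Both iterates start at $s_0$, so $v(0)=0$, and the lemma gives $\|\resH(v)\|^2=\|v\|_\calV^2$. By the same normalization, $L^2\|s^i(0)-s_0\|^2=0$. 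Hence $[\calE(s^i)]^2=L^2\|s^{i+1}-s^i\|_\calV^2$.

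\textbf{Second identity.} By linearity of $\resH$ and the first identity,
\[
\calR_{\mathrm{lin}}(s^{i+1}_{h\t},s^i)=L\resH(s^{i+1}_{h\t}-s^i)-L\resH(s^{i+1}-s^i)=L\resH(s^{i+1}_{h\t}-s^{i+1}).
\]
Apply \Cref{lemma_2.2} to $w:=s^{i+1}_{h\t}-s^{i+1}\in\calV$. This gives
\[
L^2\|\resH(w)\|^2=L^2\|w\|_\calV^2-L^2\|w(0)\|^2 .
\]
Since $s^{i+1}(0)=s_0$, we have $w(0)=s^{i+1}_{h\t}(0)-s_0$. Adding $L^2\|s^{i+1}_{h\t}(0)-s_0\|^2$ as in \eqref{eq:disc_error} cancels the subtracted initial term exactly, and we obtain $[\calE_{\mathrm{disc}}^{\rm H}(s^{i+1}_{h\t})]^2=L^2\|s^{i+1}-s^{i+1}_{h\t}\|_\calV^2$.

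\textbf{Main obstacle.} The only delicate point is bookkeeping rather than analysis. The weight $L^2$ on the initial term in the definitions is precisely what makes the second identity exact rather than merely an equivalence. For the first identity, the argument relies on the standing assumption $s^i(0)=s_0$ from Problem~\ref{pb:2}.
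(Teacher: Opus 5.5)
Your proposal is correct and follows the paper's proof: the paper also combines the residual identities $\calR(s^i)=-L\resH(s^{i+1}-s^i)$ and $\calR_{\mathrm{lin}}(s^{i+1}_{h\t},s^i)=L\resH(s^{i+1}_{h\t}-s^{i+1})$ with \Cref{lemma_2.2} and the initial conditions $s^{i+1}(0)=s^i(0)=s_0$. Your explicit tracking of the initial-value terms just spells out what the paper states in one line: $v(0)=0$ in the first identity, and the weighted term $L^2\|w(0)\|^2$ cancelling exactly in the second.
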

These equalities are a direct consequence of \Cref{lemma_2.2}, and of the initial conditions $s^{i+1}(0)=s^i(0)=s_0$.
With the proposition above, the proof of Theorem~\ref{theo:decomp} reduces to a straightforward application of the triangle inequality. First, by Proposition~\ref{lemma:SP_Iterative},
\[
\mathcal{E}(s^i)=L \|s^{i+1}-s^i\|_{\calV}\leq L \|s^{i+1} - s^{i+1}_{h\t}\|_{\calV} + L \|s^{i+1}_{h\t} - s^{i}\|_{\calV}=  \calE_{\mathrm{disc}}^{\rm H}(s^{i+1}_{h\t}) + L\calE_{\mathrm{lin}}^i.
\]
This is nothing but \eqref{reliability_bound}. The proof of \eqref{eq:efficiency} is similar, starting from $\calE_{\mathrm{disc}}^{\rm H}(s^{i+1}_{h\t})
= L\|s^{i+1}-s^{i+1}_{h\t}\|_{\calV}$. The triangle inequality applied to $L\|s^{i+1}_{h\t}-s^{i}\|_{\calV}$ reveals the other direction.\qed \\[2pt]

We use now the result in \Cref{subsec:decomposition} for establishing a posteriori estimates for a space time discretization of Problem~\ref{pb:2} and ultimately, to develop an adaptive, linear iterative scheme for Problem~\ref{eq:reformulated_weak_form}..

\section{Space-time Finite Elements, A Posteriori Analysis, and Adaptivity}\label{sec:apost}
For the finite element discretization of Problem~\ref{pb:2}, we consider a regular shape partition of $Q$ into  $\mathcal{T}_{h\tau}$, a family of simplicial space-time elements of ( $d+1$)-dimensions, that are uniformly shape-regular. We denote by $(\boldsymbol{x},t) \in \mathbb{R}^{d+1}$ a generic space-time point, where $\boldsymbol{x} \in \Omega$ is the spatial component and $t\in(0,T]$ the temporal component.  Let $\mathcal{N}_{h\tau}$ denote the set of vertices of the triangulation $\mathcal{T}_{h\tau}$, and $\mathfrak{F}_\htt$ the set of faces. For  $a\in \mathcal{N}_{h\t}$, let $\calT_\htt^a$ denote the set of elements that have $a$ as a vertex, $\phi_a$ denote the corresponding hat function (P1-Lagrange element basis), and $\oma$ the space-time patch, i.e., support of $\phi_a$. Let $h_K$ be the diameter of element $K$, and $h_\t:=\max_{K\in \calT_{h\t}} h_K>0$ denote the mesh-size of $\calT_\htt$. We define the broken polynomial space of degree at most $p \ge 0$ by $\mathcal P_p(\mathcal T_{h\tau}):=\{ v \in L^2(Q)|\; v|_K \in \mathcal P_p(K), \; \text{for all}\; K \in \mathcal T_{h\tau} \},$ where $\mathcal P_p(K)$ denotes the space of polynomials of total degree at most $p$ defined on $K$. Then, the conforming space-time finite element space is defined as
\begin{align}\label{eq:def_Vht}
    \calV_{h\t}:= \mathcal{P}_1(\calT_{h\t}) \cap H^1(Q)\subset \calS\cap \calV. 
\end{align}
The elementwise $L^2$-orthogonal projection $\Pi^p_{h\tau} : L^2(Q) \to \mathcal{P}_p(\mathcal{T}_{h\tau})$ is defined by
\begin{equation}\label{eq:L2_projection}
\int_Q (\Pi^p_{h\tau} v)\, q 
=
\int_Q v\, q 
\qquad \text{for all}\quad q \in \mathcal P_p(\calT_{h\t}).
\end{equation}
and $\bm{\Pi}^p_{h\t}: L^2(Q)^d\to \calP_p(\calT_{h\t})^d$ be the component-wise projection for vector-valued functions.
\subsection{Space-time finite elements}

\begin{theorem}[Space-time finite elements]\label{theo:lsp}
Let $s^0\in (\calS\cap \calV)$ be given and $\{s^{i}\}_{i\in\N}\subset \calV$ the sequence of solution to the $L$-scheme, obtained by solving Problem \ref{pb:2}.   Then, there exists a unique sequence of finite element solutions $\{s_{h\t}^{i}\}_{i\in \N}\subset \mathcal V_{h\tau}\subset (\calS\cap \calV)$ that satisfies $s^i_{h\t}(0)= \Pi^1_{h\t} s_0$, $s^{0}_{h\t}= \Pi^1_{h\t} s^0$, and
  \begin{align*}
      \int_0^T \!\left\langle \partial_t \left( L(s^{i+1}_{h\t} - s^i_{h\t}) + b(s^i_{h\t}) \right), \varphi_{h\t} \right\rangle \,  
+ \int_0^T \!\left( \nabla \left( L(s^{i+1}_{h\t} - s^i_{h\t}) + B(s^i_{h\t}) \right), \nabla \varphi_{h\t} \right) 
= \int_0^T \!( f, \varphi_{h\t} )
  \end{align*}
{for all $\f_{h\t}\in \mathcal V_{h\tau}$, or equivalently, in terms of the linear functional defined in \eqref{eq:defRlin},
\begin{align}\label{eq:lsp}
\calR_{\rm lin}(s^{i+1}_{h\t},s^i_{h\t}) = 0 .
\end{align}}
Further, let $i\in \N$ be fixed and $\bar{s}^{i+1}\in \calV$ solve $\calR_{\rm lin}(\bar{s}^{i+1},s^i_{h\t})=0$ with $\bar{s}^{i+1}(0)=s_0$. Then, 
\begin{equation}\label{eq:apriori_est_lsp}
    \|s^{i+1}_{h\t}-\bar{s}^{i+1}\|_{L^2(0,T;H^1_0(\Om))}\lesssim \inf_{\f_{h\t}\in \calV_{h\t}}\|\bar{s}^{i+1}-\f_{h\t}\|_\calV \to 0 \qquad \text{ as } \qquad h_\t \to 0.
\end{equation}
\end{theorem}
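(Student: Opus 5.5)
The plan is to argue by induction on $i$ and reduce each step to a linear space-time Galerkin problem for the heat operator. Assume $s^i_{h\t}\in\calV_{h\t}\subset(\calS\cap\calV)$ is known; the base case is $s^0_{h\t}=\Pi^1_{h\t}s^0$. Then $\calR(s^i_{h\t})$ is a bounded linear functional on $L^2(0,T;H^1_0(\Om))$, since $b(s^i_{h\t})\in H^1(0,T;H^{-1}(\Om))$ and $B(s^i_{h\t})\in L^2(0,T;H^1_0(\Om))$ by the Lipschitz continuity in \ref{ass:A1b}. Writing $w_{h\t}:=s^{i+1}_{h\t}$, the discrete equation \eqref{eq:lsp} becomes: find $w_{h\t}\in\calV_{h\t}$ with $w_{h\t}(0)=\Pi^1_{h\t}s_0$ such that
\[
L\,\resH(w_{h\t})(\f_{h\t}) = L\,\resH(s^i_{h\t})(\f_{h\t})-\calR(s^i_{h\t})(\f_{h\t})\qquad\text{for all test functions }\f_{h\t}.
\]
I would treat the degrees of freedom on $\{t=0\}$ as prescribed Dirichlet data. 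Correspondingly, the test space is the subspace of $\calV_{h\t}$ whose number of free nodes matches the trial space, as in the conforming setting of \cite{steinbach2015space}. After lifting the initial datum, this yields a square linear system.

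For existence and uniqueness, it then suffices to prove injectivity, because the problem is finite dimensional. Suppose the homogeneous problem, with zero data and $w_{h\t}(0)=0$, has a solution $w_{h\t}$, and test with $\f_{h\t}=w_{h\t}$. This gives
\[
\int_0^T\langle\p_t w_{h\t},w_{h\t}\rangle+\int_0^T\|\nabla w_{h\t}\|^2=\tfrac12\|w_{h\t}(T)\|^2+\|\nabla w_{h\t}\|_{L^2(Q)}^2=0 ,
\]
so $w_{h\t}=0$, using the same integration-by-parts identity as in the proof of \Cref{lemma_2.2}. If the test space is not literally the trial space, I would instead invoke the discrete inf-sup stability of the space-time heat bilinear form from \cite{steinbach2015space}. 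Induction over $i$ then produces the unique sequence $\{s^i_{h\t}\}$.

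For the a priori estimate \eqref{eq:apriori_est_lsp}, fix $i$ and subtract the continuous problem $\calR_{\rm lin}(\bar s^{i+1},s^i_{h\t})=0$ from the discrete one. The nonlinear terms involving $s^i_{h\t}$ are identical in both, so this gives Galerkin orthogonality:
\[
\resH(s^{i+1}_{h\t}-\bar s^{i+1})(\f_{h\t})=0\qquad\text{for all discrete test functions.}
\]
Combining this with the discrete inf-sup condition
\[
\|v_{h\t}\|_{L^2(0,T;H^1_0(\Om))}\lesssim\sup_{\f_{h\t}}\frac{\resH(v_{h\t})(\f_{h\t})}{\|\f_{h\t}\|_{L^2(0,T;H^1_0(\Om))}}
\]
and continuity of $\resH$ on $\calV$, a Céa/Babuška-type argument gives the bound by $\inf_{\f_{h\t}}\|\bar s^{i+1}-\f_{h\t}\|_\calV$. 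The inf-sup constant must be independent of $h_\t$, which holds on the uniformly shape-regular meshes assumed. The mismatch between $s_0$ and $\Pi^1_{h\t}s_0$ in the initial data enters only through the best-approximation term, since the initial trace of the comparison function is controlled by $\|\cdot\|_\calV$.

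Convergence of the infimum to $0$ as $h_\t\to0$ follows from a density argument. $\calW$, or smooth functions, is dense in $\calV$, and for smooth functions the nodal or Scott--Zhang interpolant in $\calV_{h\t}$ converges in the $\calV$-norm with rate $O(h_\t)$. An $\varepsilon/2$ argument then concludes.

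The main obstacle I expect is the discrete inf-sup stability of the space-time heat form with a constant uniform in $h_\t$, measured in a norm strong enough to carry out the Céa argument. The energy identity only controls $\|\nabla v\|_{L^2(Q)}$ and $\|v(T)\|$, not $\|\p_t v\|_{L^2(0,T;H^{-1})}$. Therefore I would rely directly on the stability result of \cite{steinbach2015space} rather than reprove it, and state the estimate, as in \eqref{eq:apriori_est_lsp}, only in the $L^2(0,T;H^1_0(\Om))$-norm on the left.
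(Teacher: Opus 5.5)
Your route is the paper's. You argue by induction on $i$, rewrite each step as a discrete heat problem with data $L\mathcal{R}_{\mathrm{H}}(s^i_{h\tau})-\mathcal{R}(s^i_{h\tau})\in L^2(0,T;H^{-1}(\Omega))$, and then use stability and quasi-optimality of the conforming space-time Galerkin method, which the paper simply quotes from Steinbach. Your energy-based injectivity argument is fine, and so is your choice of a test space matching the free trial nodes, which is what makes the system square.

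The step that does not hold is your sentence on the initial data. In the C\'ea argument the comparison function must satisfy $z_{h\tau}(0)=\Pi^1_{h\tau}s_0$; otherwise $v_{h\tau}=s^{i+1}_{h\tau}-z_{h\tau}$ is not in your test space. The trace bound $\|\varphi_{h\tau}(0)-s_0\|\lesssim\|\bar s^{i+1}-\varphi_{h\tau}\|_{\mathcal V}$ goes in the wrong direction. To turn an arbitrary $\varphi_{h\tau}\in\mathcal V_{h\tau}$ into an admissible comparison function you would need a discrete extension from $\{t=0\}$ that is uniformly $\mathcal V$-stable for $L^2(\Omega)$ data, and no such extension exists. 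On every element $K$ with a face $F\subset\{t=0\}$, the affine function $z_{h\tau}|_K$ has spatial gradient $\nabla\bigl(z_{h\tau}(0)|_F\bigr)$. Hence on quasi-uniform meshes $\|\nabla z_{h\tau}\|^2_{L^2(Q)}\gtrsim h_\tau\|\nabla z_{h\tau}(0)\|^2$ for all $z_{h\tau}\in\mathcal V_{h\tau}$.

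So your argument proves the estimate only with the infimum over $\{\varphi_{h\tau}\in\mathcal V_{h\tau}:\varphi_{h\tau}(0)=\Pi^1_{h\tau}s_0\}$. Your density argument interpolates smooth approximants, whose traces are not $\Pi^1_{h\tau}s_0$, so it only shows that the unconstrained infimum tends to zero. This is not a technicality. Take a heat equation with fixed data, $s_0$ with a jump in one space dimension, and the $L^2$-projection as discrete initial datum. Then $h_\tau\|\nabla\Pi^1_{h\tau}s_0\|^2$ stays bounded away from zero. The discrete solution therefore keeps an $O(1)$ error in $L^2(0,T;H^1_0(\Omega))$ on the first layer of elements, although the unconstrained infimum vanishes.

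The paper's proof passes over the same point, since the result it quotes is formulated for homogeneous initial data. To close the gap, state the bound with the constrained infimum. Then assume enough on $s_0$ and its discrete approximation for that infimum to tend to zero, e.g.\ $s_0\in H^1_0(\Omega)$ with an $H^1$-stable projection.

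Finally, the obstacle you anticipate is not one. Only the $L^2(0,T;H^1_0(\Omega))$ norm appears on the left, so testing Galerkin orthogonality with $v_{h\tau}$ itself gives $\|\nabla v_{h\tau}\|^2_{L^2(Q)}\le\mathcal R_{\mathrm H}(v_{h\tau})(v_{h\tau})=\mathcal R_{\mathrm H}(\bar s^{i+1}-z_{h\tau})(v_{h\tau})\le\sqrt2\,\|\bar s^{i+1}-z_{h\tau}\|_{\mathcal V}\|\nabla v_{h\tau}\|_{L^2(Q)}$. This yields the bound with constant $1+\sqrt2$ and needs no inf-sup theory. Steinbach's discrete inf-sup is only needed if you also want to control the time derivative.
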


\begin{proof}
From \eqref{eq:defRlin} and \eqref{eq:lsp} we get $ L\resH(s^{i+1}_{h\t})(\f_{h\t})= L\resH(s^{i}_{h\t})(\f_{h\t}) -\calR(s^{i}_{h\t})(\f_{h\t}) $ for all $\f_{h\t}\in \calV_{h\t}$. Observe that $s^{i}_{h\t}\in \calV_{h\t}\subset (\calS\cap \calV)$ implies that $L\resH(s^{i}_{h\t}) -\calR(s^{i}_{h\t})\in L^2(0,T;H^{-1}(\Om))$. Thus, from Theorem 3.2 of \cite{steinbach2015space}, given $s_{h\tau}^i\in \calV_{h\t}$, there exists a unique $s^{i+1}_{h\t}\in \calV_{h\t}\subset (\calS\cap \calV)$, such that \eqref{eq:lsp} holds true, and one proceeds with the inductive logic. The estimate \eqref{eq:apriori_est_lsp} follows from Theorem 3.2 of \cite{steinbach2015space}.
\end{proof}
\begin{remark}
Particularly in the degenerate cases, we do not expect any higher regularity of $\bar{s}^{i+1}\in \calV$. Therefore, an explicit order of convergence with respect to $h_\t$ cannot be provided. As such, \eqref{eq:apriori_est_lsp} shows consistency, but is less useful for the main goal of this work, namely the a posteriori analysis and the development of an adaptive scheme.

\end{remark}

\subsection{A posteriori estimates}\label{sec:a-posteriori}
There are several works developing reliable and efficient estimators for nonlinear diffusion equations. We refer to \cite{dolejvsi2024error,congreve2026residual,DolejsiRoskovec2021} for discontinuous Galerkin space-time methods (on either mesh-dependent norms, or efficiency dependent on the nonlinearities), and \cite{ErnSmearsVohralik2017,ErnSmearsVohralik2019} for conforming finite elements. In these works, the local adaptation on time is not addressed. However, for problems with local, evolving features such as free boundaries, an efficient discretization scheme should employ space-time adaptivity. Here, we introduce an \emph{easy-to-implement, fully and parallelly computable a posteriori estimator that works in combination with linear iterations, specifically the L-scheme or the regularized Newton and is robust with respect to nonlinearities/degeneracies}. We will also show the reliability of this estimator, which will mainly be used in the adaptive algorithm. For this purpose, an equilibrated flux $\bm{\sigma}_h$ needs to be constructed, which is mass-conservative. 

\subsubsection{Space of fluxes}
Concerning fluxes, we start by noting that, given $\nhat\in \R^d$ and $e\in\mathbb{R}$, then $t+\nhat\cdot \bm{x}=e$ is the equation of a hyperplane inside $Q'$. For any $\bm{v}\in H(\mathrm{div};Q')$, $\bm{v}\cdot \nhat$ remains conforming accross $\mathcal{H}$. However, it does not include any time component, and thus, does not require conformity in the time-direction.

\begin{definition}[Raviart-Thomas spaces for space-time problems]\label{def:Qhtspace}
For a triangulation $\calT_\htt$, its vertex $a\in \calN_\htt$ with the corresponding space-time patch $\omega_a$, and a polynomial degree $p\geq 2$, we define the spaces
\[
\calQ_\htt:= H(\mathrm{div},Q)\cap \calP_p(\calT_\htt),\qquad  \calQ^a_\htt:=\{\boldsymbol{v}^a_h \in H(\mathrm{div},\oma)\cap \calP_p(\calT_\htt^a) \;|\; \bm{v}^a_h\cdot \nhat|_{\p\oma \setminus \Gamma_T}=0 \}.
\]
In the above, $\Gamma_T:=\p \Om \times [0,T]$ and $\nhat$ denotes the restriction to an outward normal of $\p\oma$ in the space direction. As above, the equation of a face $E$ that is part of $\p\oma$ is represented by $t+ \nhat\cdot \x=e$, where $t$ is the time component of $a$ and $e\in \mathbb{R}$ suitably chosen.
\end{definition}

The non-emptyness of the spaces will follow from \Cref{mass_balance_property}.

\subsubsection{Equilibrated flux}\label{sec:equilibrated_fluxes}
For the construction of the equilibrated flux, we rewrite \eqref{eq:lsp} as 
\begin{align}\label{eq:source_flx}
 \int_0^T ( \Shti, \varphi_{h\tau} ) \, dt
 + \int_0^T ( \Fhti, \nabla \varphi_{h\tau} ) \, dt = 0,
 \quad \;\text{for all}\quad\; \varphi_{h\tau} \in \mathcal{V}_{h\tau},
\end{align}
where
\begin{align}\label{eq:source_flx_Lscheme}
    \Shti:= \partial_t \left( L(s^{i+1}_{h\t} - s^i_{h\t}) + b(s^i_{h\t}) \right), \qquad\;\text{and}\; \qquad \Fhti:= \nabla \left( L(s^{i+1}_{h\t} - s^i_{h\t}) + B(s^i_{h\t}) \right).
\end{align}
Then, we can define the equilibrated flux.
\begin{definition}[Equilibrated flux $\flx$]\label{def:equilibrated}
    Let \eqref{eq:source_flx} hold. For a vertex $a\in \calN_\htt$, let the local flux $\flxa\in \calQ_\htt^a$ be defined by
    \begin{align}\label{eq:optimization}
        \flxa:= \underset{\substack{\bm{v}^a_h\in\calQ_\htt^a\\
       \nabla\cdot \bm{v}^a_h = -\Pi_h^0 (\phi_a \Shti) - \nabla\phi_a \cdot \bm{\Pi_h^0} \Fhti}}{\mathrm{argmin}} \left\| \bm{v}^a_h + \phi_a \Fhti\right\|_{L^2(\oma)}
    \end{align}
    Here $\phi_a$ is the $P_1$-lagrange element basis satisfying $\phi_a(a)=1$. After extending $\flxa$ by zero to $Q\setminus \oma$, the equilibrated flux $\flx\in\calQ_\htt$ is defined as
    \[
    \flx:=\sum_{a\in \calN_\htt} \flxa.
    \]
\end{definition}
\begin{proposition}[Well-posedness and mass balance property of $\flx$]\label{mass_balance_property}
    The equilibrated flux $\flx \in \calQ_\htt$ is well defined and satisfies 
    \[
    \nabla\cdot \flx= -\Pi_h^0(\Shti) \qquad \text{ in all } K\in \calT_\htt.
    \]
\end{proposition}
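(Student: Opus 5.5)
The proof splits into a local and a global part. Locally, for each vertex $a$ the minimization \eqref{eq:optimization} is a strictly convex quadratic problem posed on an affine subspace of the finite-dimensional space $\calQ^a_\htt$. Such a problem has a unique minimizer as soon as its admissible set is nonempty. Nonemptiness amounts to the surjectivity of the spatial divergence from $\calQ^a_\htt$ onto the relevant piecewise-constant space, together with a compatibility condition on the datum when the patch is interior. Globally, once every $\flxa$ exists, summing the patch constraints and using the partition of unity gives the mass balance.

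\textbf{Step 1: compatibility condition.} For an interior patch, $\bm v^a_h\cdot\nhat=0$ on all of $\p\oma$. Integrating the divergence over $\oma$ and using the divergence theorem in the spatial variables (the time direction carries no flux), we need
\[
\int_{\oma}\bigl[\Pi^0_h(\phi_a\Shti)+\nabla\phi_a\cdot\bm\Pi^0_h\Fhti\bigr]=0 .
\]
Since $\phi_a$ is piecewise affine, $\nabla\phi_a$ is elementwise constant, so the projections can be dropped inside the integral. The left-hand side is then exactly \eqref{eq:source_flx} tested with $\varphi_{h\t}=\phi_a\in\calV_{h\t}$, which vanishes. For a patch touching $\Gamma_T$, no constraint is imposed on the boundary part, so no compatibility is needed. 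For patches touching $t=0$ or $t=T$, the spatial normal component of those time-slab faces is zero, so they contribute nothing. This is consistent with \Cref{def:Qhtspace}, which only constrains $\bm v\cdot\nhat$. In the same way, vertices with $t(a)=0$ should be handled using that \eqref{eq:lsp} holds for all $\varphi_{h\t}\in\calV_{h\t}$.

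\textbf{Step 2: surjectivity of the divergence on a space-time patch.} This is the main obstacle. On a $(d+1)$-simplex, the field $\bm v$ has only $d$ spatial components and $\nabla\cdot$ is the spatial divergence. I would first try to embed a spatial Raviart–Thomas construction. For each element $K$, take the polynomial field $\bm v=c\,(\x-\x_K)/d$, whose spatial divergence is the constant $c$. The normal jumps are then corrected by a discrete flux problem on the dual graph of the patch, in the style of the standard proof that $\nabla\cdot:\bm{RT}_p(\calT^a)\cap H_0(\mathrm{div})\to\calP_0$ with zero mean is onto. The degree $p\ge2$ (rather than $1$) is presumably needed to absorb the $t$-dependence of spatial normals on slanted faces, since on a face $t+\nhat\cdot\x=e$ the spatial normal trace must be a polynomial that can be matched. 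If the direct construction is messy, the fallback is duality: show that any $q\in\calP_0(\calT^a_\htt)$ (with zero mean in the interior case) orthogonal to $\nabla\cdot\calQ^a_\htt$ must vanish. One tests with bubble-type fields supported on two adjacent elements sharing a face with nonzero spatial normal, which forces $q$ to be constant across such faces. Connectivity of the patch through faces that are not purely time-like (which holds for shape-regular simplices) then gives $q$ constant, hence zero.

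\textbf{Step 3: summation and mass balance.} Each $\flxa$ extended by zero lies in $H(\mathrm{div},Q)$, because its spatial normal trace vanishes on $\p\oma\setminus\Gamma_T$. Hence $\flx\in\calQ_\htt$. On any $K$,
\[
\nabla\cdot\flx=-\sum_{a\in K}\bigl[\Pi^0_h(\phi_a\Shti)+\nabla\phi_a\cdot\bm\Pi^0_h\Fhti\bigr].
\]
Using $\sum_a\phi_a=1$ and $\sum_a\nabla\phi_a=0$ on $K$, together with linearity of $\Pi^0_h$, this equals $-\Pi^0_h(\Shti)$.
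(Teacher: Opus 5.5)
Your Steps~1 and~3, and the reduction to nonemptiness of the admissible set, coincide with the paper's proof. That is: compatibility by testing \eqref{eq:source_flx} with $\phi_a$, mass balance from $\sum_a\phi_a=1$ and $\sum_a\nabla\phi_a=0$, and uniqueness by strict convexity in your version and by a saddle-point/inf-sup argument in the paper's. The gap is Step~2, which you correctly identify as the crux but do not close. Your duality argument tests only against $q\in\calP_0(\calT^a_\htt)$. But for $\bm v\in\calQ^a_\htt$ the divergence lies in $\calP_{p-1}(\calT^a_\htt)$, and \eqref{eq:optimization} demands that it be \emph{exactly} piecewise constant. Showing that no nonzero mean-zero $q\in\calP_0$ annihilates $\nabla\cdot\calQ^a_\htt$ only proves that $\Pi^0_h\circ\nabla\cdot$ is onto. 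To get $\nabla\cdot\bm v=g$ you would also have to cancel the non-constant part of the divergence, i.e.\ control every $q\in\calP_{p-1}(\calT^a_\htt)$ orthogonal to the range. Your first construction has the same hole: $c(\x-\x_K)/d$ has constant divergence, but the fields that repair its normal jumps must themselves have piecewise constant divergence, and you never exhibit them.

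This is not a repairable technicality. Let $d=1$, and let $a$ be an interior vertex whose patch has no edge of the form $t=\mathrm{const}$, which is the generic situation on adapted meshes.
\begin{itemize}
\item Then $v$ is scalar, continuous across every interior edge, and zero on $\p\oma$.
\item If $\partial_x v|_K$ is constant, then $v|_K=\beta x+\psi(t)$ with $\psi$ a polynomial.
\item Vanishing on the non-horizontal edge of $K$ opposite $a$ forces $\psi$ to be affine. So $v|_K$ is affine and vanishes on that edge, i.e.\ it is a multiple of $\phi_a|_K$.
\item Continuity at $a$ then gives $v=c\,\phi_a$ on $\oma$.
\end{itemize}
Hence, for every $p$, the admissible set is nonempty only if the datum $-\Pi^0_h(\phi_a\Shti)-\nabla\phi_a\cdot\bm{\Pi}^0_h\Fhti$ is a multiple of $\partial_x\phi_a$. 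That is a single direction in the $(N_a-1)$-dimensional space of compatible data, so generic data violate it.

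The paper's degree-of-freedom count does not fill this hole either. It assigns one divergence condition per element, whereas prescribing a constant divergence imposes $\dim\calP_{p-1}(\R^{d+1})$ conditions per element. For $d=1$, $p=2$ that is three per element, against the $N_a+1$ free nodal values the paper invokes. Your $\calP_0$-duality argument would be the natural tool if the constraint were imposed only after projection, $\Pi^0_h(\nabla\cdot\bm v^a_h)=-\Pi^0_h(\phi_a\Shti)-\nabla\phi_a\cdot\bm{\Pi}^0_h\Fhti$. In that case you would still have to construct, for $d\ge 2$, the two-element bubbles you postulate. You would also obtain only $\Pi^0_h(\nabla\cdot\flx)=-\Pi^0_h(\Shti)$, not the stated identity. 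The reliability proof would then need an extra term for $(\mathbb{I}-\Pi^0_h)\nabla\cdot\flx$.
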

\begin{proof}
First, by counting degrees of freedom for a given $a\in \calN_\htt$ we show that there are indeed $\bm{v}_h^a\in \calQ_\htt^a $ such that $\nabla\cdot \bm{v}^a_h = -\Pi_h^0 (\phi_a \Shti) - \nabla\phi_a \cdot \bm{\Pi_h^0} \Fhti\in \calP_0(\calT_\htt)$. \textcolor{black}{For a given vertex $a \in \mathcal{N}_{h\tau}$ such that $a \notin \Gamma_T$, let $N_a \in \mathbb{N}$ denote the number of elements in $\omega_a$. This gives $N_a$ constraints for satisfying the divergence conditions. Observe that $\partial\omega_a$ contains $N_a$ faces. Since the elements in $\mathcal{T}_{h\tau}$ are $(d+1)$-simplices, each of the remaining $(d+1)N_a$ faces is shared by two adjacent elements, giving in total $((d+1)N_a/2)$ internal faces}. 
For $\bm{v}_h^a\in \calQ_\htt^a$, the trace $\bm{v}^a_h\cdot \nhat$ on each face $E$ gives a polynomial of degree $p$ with $d$ variables (since $t=e-\nhat\cdot \x$ can be substituted as the equation of the face). This implies at most $C_p^{p+d}$ unknown coefficients per  face, thus $((d+1)/2 +1) N_a C_p^{p+d}$ coefficients in total. Now, the total degrees of freedom of $\bm{v}_h^a$ (which is $d$-dimensional) per element are $dC^{p+d+1}_p$. The total number of degrees of freedom needs to exceed the number demanded by constraints, which yields the relation
\[
dC^{p+d+1}_p N_a \geq \left[\left(\frac{d+1}{2} +1\right) C_p^{p+d} + 1\right]N_a,
\]
or, after cancelling terms,
\[
\frac{d(p+d+1)}{d+1}\geq \frac{d+3}{2} + \frac{1}{C_p^{p+d}}.
\]
Rearranging, we get the condition \[p\geq \frac{1}{2}(3-d)(d+1) + \frac{d+1}{d C_p^{p+d}}\]
which is satisfied for all $p\geq 2$ if $d\geq 2$. For $d=1$, this restriction is actually an overcount since $\bm{v}_h^a$ is scalar, and thus $\bm{v}^a_h\cdot \nhat=0$ implies $\bm{v}_h^a=0$. This condition has to be enforced in the boundary nodes of $\oma$, see \Cref{fig:P2nodes}. The value at the internal nodes, $(N_a+1)$ in number, can be freely chosen to satisfy $N_a$ divergence constraints (actually one less due to the compatibility condition). When $a\in \calN_\htt$ is on the boundary $\Gamma_T$, then the restrictions are even less, since no conditions are required on $\p\oma \cap \Gamma$. Thus, for $p\geq 2$ the space $\calQ_\htt^a$  has the required degrees of freedom to satisfy conditions for $\bm{v}_h^a$ in \eqref{eq:optimization}.

    \begin{figure}[h]
                \centering
                    \includegraphics[width=.4\linewidth]{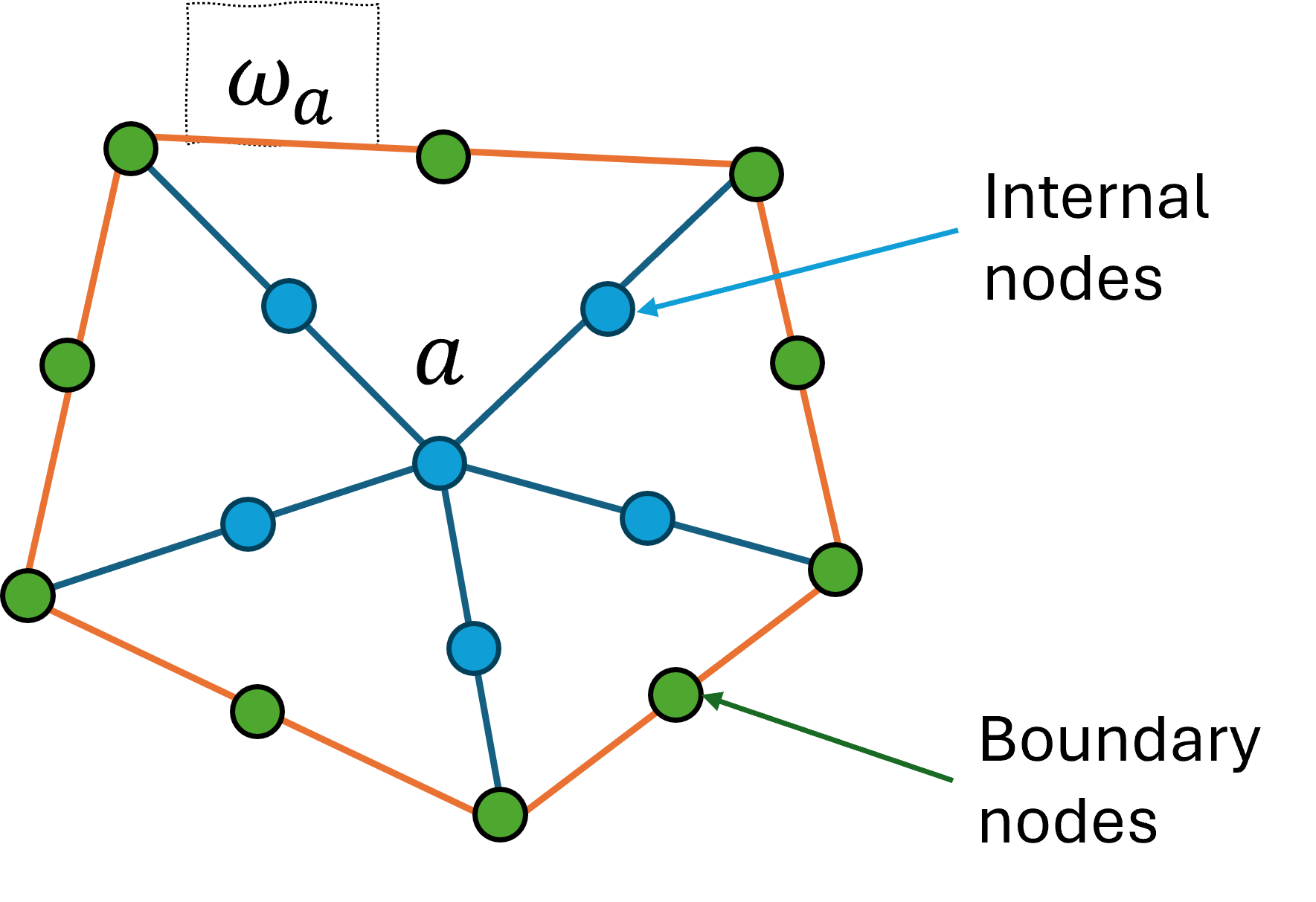}
                    \caption{For one space dimension ($d=1$), the  $P2$ nodal points of $\oma$.}
        \label{fig:P2nodes}
    \end{figure}

Next, we verify the compatibility criteria for $\bm{v}^a_h\in \calQ^a_\htt$. Using the divergence theorem on the field $(\bm{v}^a_h,0)$, we have
\begin{align*}
        0&=\int_{\p\oma} \bm{v}^a_h\cdot \nhat= \int_{\oma}  \nabla \cdot \bm{v}^a_h = - \left[\int_{\oma} \Pi_h^0 (\phi_a \Shti) + \nabla\phi_a \cdot \bm{\Pi_h^0} \Fhti\right]\\
        &= -\left[\int_0^T (\Shti, \phi_a ) \, dt
 + \int_0^T ( \Fhti, \nabla \phi_a ) \, dt\right]=0.
\end{align*}
The last two steps follow from the property of the projector $\Pi^0_h$, and from \eqref{eq:source_flx} by inserting $\f_\htt=\phi_a$. The Euler-Lagrange equation corresponding to the constrained minimization problem \eqref{eq:optimization} then becomes: find $(\flxa,\l^a_\htt)\in \calQ_\htt^a \times \calP_1(\calT_a)$ such that 
\begin{equation*}
    \left\{\begin{aligned}
        &(\flxa + \phi_a \Fhti, \bm{v}^a_h)_{L^2(\oma)} + (\l^a_\htt, \nabla \cdot \bm{v}^a_h)_{L^2(\oma)}= 0,\\
        &(l_h^a, \nabla \cdot \flxa)_{L^2(\oma)}= -(l_h^a\phi_a, \Shti)_{L^2(\oma)}- (l_h^a \nabla \phi_a, \Fhti)_{L^2(\oma)},
    \end{aligned}\right.  
\end{equation*}
for all $(\bm{v}^a_h,l_h^a)\in \calQ_\htt^a \times \calP_1(\calT_a)$. The existence-uniqueness of a solution to this system follows from the standard inf-sup condition since it is a saddle-point problem \cite[Chapter 4]{boffi2013mixed}.

Finally, we get by the partition of unity property $\sum_{a\in \calN_{h\t}} \phi_a=1$ that
\begin{align*}
    &\nabla \cdot \flx = \sum_{a\in \calN_{h\t}} \nabla \cdot \flxa= -\sum_{a\in \calN_{h\t}} \left[\,\Pi_h^0 (\phi_a \Shti) + \nabla\phi_a \cdot \bm{\Pi_h^0} \Fhti\right]\\
    &= - \left[\Pi_h^0 \left((\Sigma_{a\in \calN_{h\t}} \phi_a) \Shti\right) + \nabla(\Sigma_{a\in \calN_\htt}\phi_a ) \cdot \bm{\Pi_h^0} \Fhti\right]= - \Pi_h^0 \Shti.
\end{align*}
\end{proof}

\subsubsection{Guaranteed reliability estimate}
The equilibrated fluxes constructed in \Cref{sec:equilibrated_fluxes} are used to provide local estimates for the error. For this, we use the residuals $\calR$  and $\calR_{\rm lin}$ defined in \eqref{eq:residual} and \eqref{eq:defRlin}, respectively. For $\calV_\htt$ defined in \eqref{eq:def_Vht}, let $\{s_{h\t}^{i}\}_{i\in \N}\subset \mathcal V_{h\tau}\subset (\calS\cap \calV)$ be the sequence of finite element solutions introduced in \Cref{theo:lsp}. With the equilibrated flux $\flx\in \calQ_\htt$ given in \Cref{def:equilibrated} with $\calQ_\htt$, $\Shti$, $\Fhti$ from \Cref{def:Qhtspace} and \eqref{eq:source_flx_Lscheme} respectively, we introduce the flux and data oscillation estimators for $Q'\subseteq Q$ as,

\begin{subequations}
   \begin{align}\label{flux_oscillation_estimator}
        \eta_{\mathrm{F},Q'}^i:= \|\flx + \Fhti\|_{L^2(Q')}, \qquad 
    \eta_{\mathrm{osc},Q'}^i:= C_{\,\!_{\Omega}}^{\mathrm{P}} \,h_\Om\,\|(\mathbb{I}-\Pi_h^0) (\p_t b(s^i_\htt) -f)\|_{L^2(Q')} 
\end{align}
along with the discretization and linearization estimators
 \begin{align}
     &\eta_{\mathrm{disc}}^{i}:= \left((\eta_{\mathrm{F},Q}^i + \eta_{\mathrm{osc},Q}^i)^2 + L^2\|s^i_{h\t}(0)-s_0\|^2\right)^{\frac{1}{2}},\\
    &\eta_{\mathrm{lin}}^{i}:= \left([C_{\,\!_{\Omega}}^{\mathrm{P}}\,h_\Om]^2\,\|\p_t (s^{i+1}_\htt - s^i_\htt)\|_{L^2(Q)}^2 + \|\nabla (s^{i+1}_\htt - s^i_\htt)\|_{L^2(Q)}^2 + \|(s^{i+1}_\htt - s^i_{h\t})(T)\|^2\right)^{\frac{1}{2}}.
 \end{align}
 \end{subequations}
 Let $ \eta^i:= \eta_{\mathrm{disc}}^{i} + L\eta_{\mathrm{lin}}^{i}$ be the total estimator. Then we have the following.
\begin{theorem}[Guaranteed a posteriori upper bound on residual]\label{thm:apost-reliability-i}
The errors $\calE$, $\calE_{\mathrm{disc}}^{\rm H}$, and $\calE_{\mathrm{lin}}^{i}$ defined in \eqref{eq:total_indicator}--\eqref{err:linearization} satisfy the reliability estimates
 \begin{align}\label{eq:reliability_estimate}
\mathcal{E}_{\mathrm{disc}}^{\rm H}(s^{i+1}_{_\htt}) \leq \eta_{\mathrm{disc}}^{i}, \qquad \mathcal{E}^i_{\rm lin} \leq \eta^i_{\rm lin},\;\qquad \text{and} \;\qquad \mathcal{E}(s^i_{_\htt}) \leq \eta^i.
 \end{align}
\end{theorem}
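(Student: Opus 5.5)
The plan is to prove the three bounds in \eqref{eq:reliability_estimate} one after another, with the third following from the first two and \Cref{theo:decomp}.

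\textbf{Linearization bound.} This is immediate from the definition of $\|\cdot\|_\calV$ in \eqref{eq:norm_V}. For $v:=s^{i+1}_\htt-s^i_\htt\in\calV_\htt$ we have $\p_t v\in L^2(Q)$. By \eqref{eq:defHinv_norm}, $\|\p_t v(t)\|_{H^{-1}(\Om)}\le C^{\mathrm P}_\Om h_\Om\|\p_t v(t)\|$ for a.e.\ $t$. Integrating in time and adding $\|\nabla v\|^2_{L^2(Q)}+\|v(T)\|^2$ gives $\calE^i_{\rm lin}\le\eta^i_{\rm lin}$. (Here $\calE^i_{\rm lin}$ is read with $s^i$ replaced by its discrete counterpart $s^i_\htt$, as the statement intends.)

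\textbf{Discretization bound.} The initial-value terms in $\calE_{\rm disc}^{\rm H}$ and $\eta^i_{\rm disc}$ coincide, because $s^{i+1}_\htt(0)=s^i_\htt(0)=\Pi^1_\htt s_0$. So it suffices to show $\|\calR_{\rm lin}(s^{i+1}_\htt,s^i_\htt)\|_{L^2(0,T;H^{-1})}\le\eta^i_{\rm F,Q}+\eta^i_{\rm osc,Q}$.

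Fix $\f\in L^2(0,T;H^1_0(\Om))$. By \eqref{eq:defRlin}, \eqref{eq:residual} and \eqref{eq:source_flx_Lscheme},
\[
\calR_{\rm lin}(s^{i+1}_\htt,s^i_\htt)(\f)=\int_0^T(\Shti-f,\f)+\int_0^T(\Fhti,\nabla\f).
\]
The source $f$ is absent from \eqref{eq:source_flx_Lscheme}; I will carry it in $\Shti$ (i.e.\ use $\Shti-f$ throughout \Cref{def:equilibrated}). This is what makes $\eta_{\rm osc}$ contain $f$.

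Next, insert $\flx$. Since $\flx\in H(\mathrm{div};Q)$ has vanishing normal trace only in the space direction and $\f(t)\in H^1_0(\Om)$, Green's formula in space for a.e.\ $t$ gives $\int_0^T(\flx,\nabla\f)=-\int_0^T(\nabla\cdot\flx,\f)$. Proposition~\ref{mass_balance_property} then yields
\[
\calR_{\rm lin}(\f)=\int_0^T\bigl((\mathbb I-\Pi^0_h)(\Shti-f),\f\bigr)+\int_0^T(\Fhti+\flx,\nabla\f).
\]
The second term is bounded by Cauchy--Schwarz by $\eta^i_{\rm F,Q}\|\nabla\f\|_{L^2(Q)}$.

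For the first term, note that $\Shti=L\p_t(s^{i+1}_\htt-s^i_\htt)+\p_t b(s^i_\htt)$, and the first summand is elementwise constant because the space is $\calP_1$. Hence $(\mathbb I-\Pi^0_h)(\Shti-f)=(\mathbb I-\Pi^0_h)(\p_t b(s^i_\htt)-f)$. Cauchy--Schwarz together with \eqref{eq:Poincare_Friedrichs inequality}, $\|\f(t)\|\le C^{\mathrm P}_\Om h_\Om\|\nabla\f(t)\|$, bounds this term by $\eta^i_{\rm osc,Q}\|\nabla\f\|_{L^2(Q)}$. Taking the supremum over $\f$ finishes the bound.

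\textbf{Total bound.} Apply \eqref{reliability_bound} of \Cref{theo:decomp} with $s^i_\htt$ in place of $s^i$; this is allowed since $s^i_\htt\in\calV_\htt\subset\calS\cap\calV$. The argument there adds an initial-value mismatch term, and it reduces to the triangle inequality. More directly, by \eqref{eq:defRlin} we have $\calR(s^i_\htt)=\calR_{\rm lin}(s^{i+1}_\htt,s^i_\htt)-L\resH(s^{i+1}_\htt-s^i_\htt)$. Bounding $\|\resH(v)\|^2$ via \Cref{lemma_2.2} by $\|v\|_\calV^2-\|v(0)\|^2\le\|v\|^2_\calV$, with $v(0)=0$, gives the bound $\calE_{\rm disc}^{\rm H}+L\calE_{\rm lin}^i\le\eta^i$ after combining the initial-value terms.

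\textbf{Main obstacle.} I expect the delicate step to be the space-only integration by parts for a space-time Raviart--Thomas field that is not time-conforming. It must be justified that $\int_Q\flx\cdot\nabla\f=-\int_Q(\nabla\cdot\flx)\f$ holds with the elementwise divergence, using only continuity of $\bm v\cdot\nhat$ across faces and zero normal trace on $\Gamma_T$. I would prove it by density, using smooth $\f$ compactly supported in space.
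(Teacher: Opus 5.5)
Your proposal is correct and follows the paper's proof essentially step by step. You add the vanishing term $\int_Q(\f\,\nabla\cdot\flx+\nabla\f\cdot\flx)$, use the mass balance of \Cref{mass_balance_property} and the elementwise constancy of $\p_t(s^{i+1}_\htt-s^i_\htt)$, and conclude with Cauchy--Schwarz plus Poincar\'e--Friedrichs, the bound $\|\p_t v\|_{H^{-1}(\Om)}\le C^{\mathrm P}_\Om h_\Om\|\p_t v\|$ for $\eta^i_{\rm lin}$, and the reliability bound of \Cref{theo:decomp}. You also make explicit several points the paper leaves implicit: that $f$ must be carried inside $\Shti$, that the initial terms coincide, and a direct triangle-inequality argument for the total bound that avoids the hypothesis $s^i(0)=s_0$ built into a literal appeal to \Cref{theo:decomp}. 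One correction: the boundary contribution on $\Gamma_T$ vanishes because $\f(t)\in H^1_0(\Om)$, not because $\flx$ has zero normal trace there, since the local spaces impose no condition on $\Gamma_T$.
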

We stipulate that proving the efficiency of this estimator (lower bound of $\calE(s^i_{h\t})$ by $\eta_{\mathrm{disc}}^{i}$) might also be possible with minor modifications, see \Cref{remark:efficiency}. However, since our main target is adaptivity, we skip this for brevity.
\begin{proof}
Let $\varphi \in L^2(0,T;H^1_0(\Omega))$ with $\|\nabla\varphi\|_{L^2(Q)} = 1$.
We start from the residual functional \eqref{eq:source_flx},
\[
 \mathcal{R}_{\rm lin}(s^{i+1}_{h\tau}, s^i_{h\tau})(\varphi)
= \int_0^T\!\Big[(\mathcal{S}^i_{h\tau},\varphi) + (\mathbf{F}^i_{h\tau},\nabla\varphi)\Big]\,dt.
\]
The following term will be added to the above. Note that this term vanishes. More precisely, with $\hat{\underline{\bm{n}}}_{t,x}$ being the unit normal on $\p Q$, by the divergence theorem we have
\begin{align*}
    &\int_{Q} \left[\f\nabla \cdot \flx + \nabla \f\,\cdot \flx\right]= \int_{Q}\nabla \cdot (\f\flx) = \int_{\p Q} \f (\flx,0)\cdot \hat{\underline{\bm{n}}}_{t,x}\\
    &= \int_{\p \Om\times [0,T]} \f \,\flx\cdot \hat{\bm{n}}_{x} +  \int_{\Om\times \{0\}} \f \,\flx\cdot (\bm{0},-1) +  \int_{\Om\times \{T\}} \f\, \flx\cdot (\bm{0},1)=0.
\end{align*}
The first term on the right vanishes since  $\varphi\in L^2(0,T;H^1_0(\Om))$. Adding this term yields
\begin{align*}
 \mathcal{R}_{\rm lin}(s^{i+1}_{h\tau}, s^i_{h\tau})(\varphi)
&= \int_0^T\!\Big[(\mathcal{S}^i_{h\tau} + \nabla\cdot\boldsymbol{\sigma}^i_{h\tau},\varphi)
+ (\mathbf{F}^i_{h\tau} + \boldsymbol{\sigma}^i_{h\tau},\nabla\varphi)\Big]\,\dd t.
\end{align*}
By \Cref{mass_balance_property}, $\nabla\cdot\boldsymbol{\sigma}^i_{h\tau} = -\Pi^0_h(\mathcal{S}^i_{h\tau})$ in all $K\in\mathcal{T}_{h\tau}$, so that
\[
\mathcal{S}^i_{h\tau} + \nabla\cdot\boldsymbol{\sigma}^i_{h\tau}
= (\mathbb{I}-\Pi^0_h)\mathcal{S}^i_{h\tau}.
\]
Recalling \eqref{eq:source_flx_Lscheme}, we expand
\[
(\mathbb{I}-\Pi^0_h)\mathcal{S}^i_{h\tau}
= (\mathbb{I}-\Pi^0_h)\partial_t\!\left(L(s^{i+1}_{h\tau}-s^i_{h\tau})\right)
+ (\mathbb{I}-\Pi^0_h)\left(\partial_t b(s^i_{h\tau})-f\right).
\]
Since $s^{i+1}_{h\tau}, s^i_{h\tau}\in\mathcal{V}_{h\tau}\subset\mathcal{P}_1(\mathcal{T}_{h\tau})$, the term $\partial_t(L(s^{i+1}_{h\tau}-s^i_{h\tau}))$ is piecewise constant on $\mathcal{T}_{h\tau}$ and thus, $(\mathbb{I}-\Pi^0_h)\partial_t\!\left(L(s^{i+1}_{h\tau}-s^i_{h\tau})\right) = 0$.
Substituting into the residual identity, we obtain
\begin{align}\label{eq:res-identity}
\mathcal{R}_{\rm lin}(s^{i+1}_{h\tau}, s^i_{h\tau})(\varphi)
&= \int_0^T\!\Big[((\mathbb{I}-\Pi^0_h)(\partial_t b(s^i_{h\tau})-f),\varphi)
+ (\mathbf{F}^i_{h\tau} + \boldsymbol{\sigma}^i_{h\tau},\nabla\varphi)\Big]\,dt.
\end{align}
Applying Cauchy-Schwarz to each term of \eqref{eq:res-identity},
\begin{align*}
\left| \mathcal{R}_{\rm lin}(\varphi)\right|
&\leq \|(\mathbb{I}-\Pi^0_h)(\partial_t b(s^i_{h\tau})-f)\|_{L^2(Q)}\,
\|\varphi\|_{L^2(Q)}+\|\mathbf{F}^i_{h\tau}+\boldsymbol{\sigma}^i_{h\tau}\|_{L^2(Q)}\,
\|\nabla\varphi\|_{L^2(Q)}.
\end{align*}
Since $\varphi(\cdot,t)\in H^1_0(\Omega)$ a.e.\ $t\in [0,T]$ and $\|\nabla \f\|_{L^2(Q)}=1$, using the Poincar\'e-Friedrichs inequality gives 
\begin{align}
    \|\varphi(t)\|_{L^2(Q)}\leq C_{\,\!_{\Omega}}^{\mathrm{P}}\,h_\Omega\, \|\nabla\varphi(t)\|_{L^2(Q)}=C_{\,\!_{\Omega}}^{\mathrm{P}}\, h_\Omega.
\end{align}
Therefore
\begin{align}\label{eq:Hminus1-R}
&\|\mathcal{R}_{\rm lin}(s^{i+1}_{h\tau},s^i_{h\tau})\|_{L^2(0,T;H^{-1}(\Omega))} :=\sup_{\|\nabla \f\|_{L^2(Q)}=1}\left| \mathcal{R}_{\rm lin}(\varphi)\right|\nonumber
\\
&\leq \Big[C_{\,\!_{\Omega}}^{\mathrm{P}}\,h_\Omega\,
\|(\mathbb{I}-\Pi^0_h)(\partial_t b(s^i_{h\tau})-f)\|_{L^2(Q)}
+ \|\mathbf{F}^i_{h\tau}+\boldsymbol{\sigma}^i_{h\tau}\|_{L^2(Q)}\Big]\overset{\eqref{flux_oscillation_estimator}}{=}\;
\eta^i_{\rm osc} \;+\; \eta^i_{\rm F}.
\end{align}
It remains to bound the total error. From the reliability bound 
\eqref{reliability_bound} of \Cref{theo:decomp},
\[
\mathcal{E}(s^i_{h\tau}) 
\leq \mathcal{E}^{\rm H}_{\rm disc}(s^{i+1}_{h\tau}) + L\,\mathcal{E}^i_{\rm lin}.
\]
The first term, defined in \eqref{eq:disc_error}, is bounded by \eqref{eq:Hminus1-R}, giving $\mathcal{E}^{\rm H}_{\rm disc}(s^{i+1}_{h\tau})\leq \eta^i_{\rm disc}$.
Applying \eqref{eq:defHinv_norm} we have $\|\p_t(s^{i+1}_{h\tau}-s^i_{h\tau})\|_{L^2(0,T;H^{-1}(\Om))}\leq C_{\,\!_{\Omega}}^{\mathrm{P}}\, h_\Om\|\p_t(s^{i+1}_{h\tau}-s^i_{h\tau})\|_{L^2(Q)}$. This implies for the linearization error by \eqref{err:linearization} that
\[\mathcal{E}^i_{\rm lin} = \|s^{i+1}_{h\tau}-s^i_{h\tau}\|_{\mathcal{V}} \leq  \eta^i_{\rm lin}.\]
Combining these two bounds, we obtain the estimate \eqref{eq:reliability_estimate}.
\end{proof}

\begin{remark}[Effectivity of the estimator $\eta^i$] The estimator $\eta^i$ is fully and parallelly computable, and does not involve any unknown constants (Poincar\'e constant is generally known, e.g., for convex domains, this is $C_{\,\!_{\Omega}}^{\mathrm{P}}=1/\pi$, \cite{payne1960optimal}). 
However, generally the flux estimator $\eta_{\mathrm{F}}^i$ is much greater than $\eta_{\mathrm{osc}}^i$ (unless the process is multiscale), particularly since $\p_t b(s^i_\htt)|_K=\p_t s^i_\htt|_K \in \calP_0(K)$ if $s^i_\htt\leq u^*$ in $K\in \calT_\htt$ from \Cref{lemma:b_B_construction}. Moreover, the linearization estimator $\eta_{\rm lin}^i$ vanishes as $i\to 0$. Thus, overall the estimators are expected to be quite accurate. This is verified in \Cref{fig:true_error_vs_estimator}, which shows that the effectivity index (error/estimator) is of order 1 in the non-degenerate regions ($s>0$) for the porous medium equation. However, the figure also reveals that proving local efficiency of the estimator is practically impossible at the degenerate regions where $s=0$, since the error is essentially 0. It is also possible to increase accuracy of the estimator further by taking higher degree projections $\Pi^p_h$ by increasing the degree $p$ of $\calQ_\htt$ in \Cref{def:Qhtspace}. 
\end{remark}

\begin{remark}[Global Poincaré constants in \Cref{theo:lsp} and efficiency estimate]\label{remark:efficiency} The use of the global Poincarè constant becomes necessary for the term $\int_0^T ((\mathbb{I}-\Pi_h^0) (\p_t b(s^i_\htt) - f),\f)$ since $\f\in L^2(0,T;H^1_0(\Om))$ only. However, if $\f\in H^1(Q)$, then we could have estimated this term by 
\begin{align*}
    &\int_0^T ((\mathbb{I}-\Pi_h^0) (\p_t b(s^i_\htt) - f),\f)=\sum_{K\in \calT_\htt} \left((\mathbb{I}-\Pi_h^0) (\p_t b(s^i_\htt) - f),\f-\tfrac{1}{|K|}\smallint_K \f \right)_K\\
    &\leq \sum_{K\in \calT_\htt}\|(\mathbb{I}-\Pi_h^0) (\p_t b(s^i_\htt) - f)\|_{L^2(K)}\left\|\f-\tfrac{1}{|K|}\smallint_K \f\right\|_{L^2(K)}\\
    &\leq \sum_{K\in \calT_\htt} \frac{h_K}{\pi} \|(\mathbb{I}-\Pi_h^0) (\p_t b(s^i_\htt) - f)\|_{L^2(K)}\left\|\f\right\|_{H^1(K)} \leq [\hat{\eta}_{\mathrm{osc}}^i] \|\f\|_{H^1(Q)}
\end{align*}
where $\hat{\eta}_{\mathrm{osc}}^i:=\left(\sum_{K\in \calT_\htt} \frac{h_K^2}{\pi^2}\|(\mathbb{I}-\Pi_h^0) (\p_t b(s^i_\htt) - f)\|_{L^2(K)}^2\right)^{\frac{1}{2}} $.
This could be useful in obtaining an global efficiency bound on $\calE(s^i_\htt)$ since $\|\mathcal{R}_{\rm lin}\|_{L^2(0,T;H^{-1}(\Omega))} \geq \|\mathcal{R}_{\rm lin}\|_{H^1(Q)^*}$.
\end{remark}

\subsubsection{Space-time adaptive algorithm}
We outline an adaptive solution strategy based on the estimators derived in \Cref{thm:apost-reliability-i}, and the $\texttt{SOLVE} \;\longrightarrow\; \texttt{ESTIMATE} \;\longrightarrow\; \texttt{MARK} \;\longrightarrow\; \texttt{REFINE}$ principle \cite{carstensen2014axioms}.
On a given mesh $\mathcal{T}_{h\tau}^\ell$ ($\ell\in \N$), the nonlinear problem is solved by the L-scheme linearization (Problem \ref{pb:2}). Once the linearization error $\eta^i_{\rm lin}$   is below a prescribed tolerance, the equilibrated flux reconstruction $\boldsymbol{\sigma}^i_{h\tau}$ (see \Cref{def:equilibrated}) is performed, yielding discretization estimator $\eta^i_{\mathrm{disc}}$ given in \eqref{flux_oscillation_estimator}. If the linearization error $\eta^i_{\rm lin}$ is smaller than a fraction $\d_{\mathrm{lin}}\ll 1$ of the discretization error $\eta^i_{\mathrm{disc}}$, then a refinement step is done.
The element-wise error indicators $\eta^i_K$ for each element $K \in \mathcal{T}_{h\tau}$ are computed, based on \Cref{thm:apost-reliability-i}, comprising of flux, data oscillation, and initial condition contributions,
\begin{align}
    \eta^i_K := \left(2\left([\eta^i_{\mathrm{osc},K}]^2 + [\eta^i_{\mathrm{F},K}]^2\right) + L^2 \int_{K\cap (\{0\}\times \Om)} |s^i_{\htt}(0)-s_0|^2 \right)^{\frac{1}{2}} \text{ such that } \sum_{K\in \calT_\htt} [\eta^i_K]^2\geq [\eta^i_{\mathrm{disc}}]^2. \label{eq:etaK}
\end{align}
The elements that contribute the most to the total error are marked for refinement using the D\"{o}rfler criterion. The marked elements are refined using vertex bisection. After projecting to the new mesh $\calT^{\ell+1}_\htt$, the linear iterations are restarted. The resulting adaptive loop is summarized in Algorithm~\ref{alg:Lscheme-adapt}.

To evaluate the performance of the adaptive strategy, along with the adaptive algorithm, we also investigate a \emph{uniform refinement} strategy. In this case, a uniform refinement is done by a factor $\mathrm{r}_\mathrm{ref}=2$ if the adaptive criterion in Algorithm~\ref{alg:Lscheme-adapt},
\[\eta_{\mathrm{lin}}^i\le \delta_{\mathrm{lin}}\eta^i_{\mathrm{disc}},\]
is met. As evaluation criteria, we consider both the number of iterations, as well as the degree of freedom for each mesh. More precisely, for any mesh-level $\ell\in \N$, let $\overline{i}(\ell)\in \N$ be the iteration at which the adaptive criterion is met both for the adaptive and the uniform strategies. Letting $|\mathrm{DOF}|_\ell$ denote the degrees of freedom of the corresponding finite element space on $\calT^\ell_\htt$, to assess the adaptive strategy, we consider the total cost, respectively the total number of iterations. For the $i^{\mathrm{th}}$ iteration at the $\ell^{\rm th} \text{level}$ we define
\begin{equation}
  \label{eq:cost}\mathrm{Cost}(i,\ell)
:= \sum_{\ell'=0}^{\ell-1}\sum_{i'=1}^{\overline{i}(\ell')}|\mathrm{DOFs}|_{\ell'} + \sum_{i'=1}^i |\mathrm{DOFs}|_{\ell},\qquad \mathrm{IterNo}(i,\ell):=\sum_{\ell'=0}^{\ell-1} \overline{i}(\ell') + i.
\end{equation}

\textcolor{black}{Here, $\mathrm{Cost}(i,\ell)$ represents the accumulated degrees of freedom over all iterations at previous mesh levels $\ell'=0,\ldots,\ell-1$ until the stopping criterion is met, plus those at the current level $\ell$ up to iteration $i$. Similarly, $\mathrm{IterNo}(i,\ell)$ is the total number of iterations performed across all mesh levels through iteration $i$ at level $\ell$.}

\begin{algorithm}[h!]
\caption{\vspace{0.4em}Iterative scheme with equilibrated--flux a posteriori adaptivity\vspace{0.5em}}
\label{alg:Lscheme-adapt}
\begin{minipage}{\linewidth}
\begin{algorithmic}[1]
\Require initial mesh $\mathcal{T}_{h\tau}^0$, initial guess $s^0$, and parameters: $L>1$, tolerance $\texttt{Tol}\ll 1$, maximum iteration $i_{\max}$, maximum level $\ell_{\max}$, D\"orfler parameter 
$0<\texttt{$\theta$}<1$, refinement factor $\texttt{$\mathrm{r}_\mathrm{ref}$}>1$, balance factor
$0<\texttt{$\delta_{\mathrm{lin}}$}<1$, 
and an entry threshold 
$0<\texttt{$\varepsilon_{\mathrm{adapt}}$}\le 0.1$.
\State \textbf{set} $\texttt{Error}=1,\;\;$ $i=0,\;\;$, $\ell=0$
\While{$i<\texttt{$i_{\texttt{max}}$}\;\;$ or $\;\;\texttt{Error}\ge\texttt{Tol}\;\;$ or  $\;\;\ell<\ell_{\max}$\;\;} \Comment{nonlinear (L scheme) loop}
  \State Solve linearized problem for $s^{i+1}_\htt$ on $\mathcal T_{h\tau}^\ell$ \hfill \Comment{(\texttt{SpaceTime} solve)(see~\eqref{eq:linearization_gen})}
  \State Compute $\eta_{\mathrm{lin}}^i$ \Comment{linearization error}

  \If{$\eta^i_{\mathrm{lin}}<\e_{\mathrm{adapt}}$}\hfill  \Comment{enter adaptive (refinement) loop}
      \State Reconstruct $\boldsymbol{\sigma}^{\,i}_{h\tau}$ 
      \hfill (local patch problem, see \Cref{def:equilibrated})
     \State Compute $\eta^i_K$, $\eta^i_{\mathrm{disc}}$, and $\eta^i\mapsto \texttt{Error}$
      \hfill (see~\Cref{thm:apost-reliability-i},\eqref{eq:etaK})
     \If{$\eta_{\mathrm{lin}}^i\le \delta_{\mathrm{lin}}\cdot\eta^i_{\mathrm{disc}}$} 
     \Comment{Mesh adaptivity gate}
      \State \textbf{Dörfler mark:} identify minimal $\mathcal{M}\subset\mathcal{T}_{h\tau}^\ell$ with
       $\sum_{K\in\mathcal M}(\eta^i_K)^2 \ge \theta^2\sum_{K\in\mathcal{T}_{h\tau}^\ell}(\eta^i_K)^2$
        \State \textbf{Refine} marked elements 
               $K\in\mathcal{M}$ by setting 
               $h_K^{\mathrm{new}} = h_K / \texttt{$\mathrm{r}_\mathrm{ref}$}$
        \State \textbf{Update mesh} $\mathcal T_{h\tau}^{\ell+1}\gets\textsc{adaptmesh}(\mathcal T_{h\tau}^\ell,\{h_K^{\mathrm{new}}\}_{\mathcal{M}})$  \Comment{generate new mesh}
         \State \textbf{Post-refinement:} $\ell\gets (\ell+1)$. Project $s^{i+1}_{h\tau}, s^{i}_{h\tau}$ to the new $\calV_\htt^{\ell}$ on $\calT_{h\tau}^\ell$
     \EndIf
  \EndIf
  \State Update $s^{i}\gets s^{i+1}$, $i\gets i+1$  \Comment{next iteration}
  \EndWhile
\end{algorithmic}
\end{minipage}
\end{algorithm}
\noindent
In the following section, we compare the performance of these refinement strategies with respect to $\mathrm{Cost}(i,\ell)$ and $\mathrm{IterNo}(i,\ell)$ across several numerical test cases, demonstrating their robustness and efficiency. For cases when the exact solution $u_{\mathrm{exact}}$ is known, we would define the element-wise error and local effectivity index of the estimator as
\begin{align}\left\{\begin{aligned}
        &\|e\|_K^2:= \|b(s^i_\htt)-u_{\mathrm{exact}}\|_K^2 + \|\nabla (B(s^i_\htt)-\Phi(u_{\mathrm{exact}}))\|_K^2, \\
    &(\text{Effectivity index})_K:= \|e\|_K/\eta_K^i,
\end{aligned}
\right. \qquad \text{ for all } K\in \calT_\htt.\label{eq:effectivity}
\end{align}

\section{Numerical Study of the Adaptive Space-Time Scheme}\label{sec:Numerical_results}

This section illustrates the performance of the space-time linearization scheme combined with the adaptive algorithm of Section~\ref{sec:apost}. As a baseline for comparison, we consider both the L-scheme and a regularized Newton scheme, accompanied by adaptive or uniform refinements.

\subsubsection{A modified Newton ($N_{\mathrm{reg}}$) scheme}
For the numerical tests, we also consider a linear iterative scheme that can be seen as a regularized Newton ($N_{\mathrm{reg}}$) scheme. This provides a baseline for comparisons with the L-scheme. The iteration in the regularized Newton scheme can be interpreted as the solution to the following problem.
\begin{problem}
Let $\epsilon\geq0$ be fixed. For $i\in \N$ and $s^i\in (\calS\cap \calV)$, find $s^{i+1}\in \calV$ solving
\begin{align}\label{eq:M-scheme}
\int_0^T \!\left\langle \partial_t \left( L^i_b(s^{i+1} - s^i) + b(s^i) \right), \varphi \right\rangle \,  
+ \int_0^T \!\left( \nabla \left( L^i_B(s^{i+1} - s^i) + B(s^i) \right), \nabla \varphi \right) 
= \int_0^T \!( f, \varphi ),
\end{align}
\end{problem}
with $L^i_b:= b'(s^i) + \varepsilon$ and $L^i_B:=B'(s^i) +\varepsilon$.

Observe that the $\varepsilon=0$ case corresponds to the Newton scheme. However, below we keep $\e>0$ since, for the degenerate test cases investigated, the $\e=0$ option always diverges. This is either due to $b'$ or $B'$ being 0, which makes the time-derivative or space-derivative terms vanish, and therefore the Jacobian becomes singular.
Note that this regularization is similar to the $M$-scheme analyzed in \cite{mitra2019modified,javed2025robust}

The adaptive estimator $\eta^i$ can also be computed following the analysis in \Cref{sec:a-posteriori}. More precisely, one can change the $L$-scheme in \eqref{eq:source_flx} by considering
\begin{align}\label{eq:ShFh_M}
\Shti=
\p_t(L^i_b(s^{i+1} - s^i) + b(s^i) )-f, \qquad
\Fhti=\nabla (L^i_B (s^{i+1} - s^i) + B(s^i))
\end{align}
taking $L$ constant in \eqref{eq:source_flx_Lscheme}. Note that the proofs of \Cref{theo:converegence,theo:decomp,theo:lsp,thm:apost-reliability-i} do not extend to this scheme, so the convergence of the scheme and the reliability of the estimator cannot be guaranteed. However, it will serve as an important point of comparison.

In the following, we denote by \textbf{Adapt-L} and \textbf{Adapt-$N_{\mathrm{reg}}$} the adaptive mesh refinement strategies for the L and regularized Newton schemes, respectively; by \textbf{Uni-L} and \textbf{Uni-$N_{\mathrm{reg}}$} the uniform mesh refinement for L and regularized Newton schemes; and by \textbf{L (fixed mesh)} and \textbf{$N_{\mathrm{reg}}$ (fixed mesh)}, the L and regularized Newton schemes on a fixed mesh. 


\subsubsection{Numerical setup}\label{subsec:num-setup}
All numerical experiments are implemented in 
\texttt{FreeFem++} \cite{hecht2012new}.
We investigate the problems in the space-time domain $Q=\Omega\times(0,T)$, where $\Omega=[-8,8]^d$ with $d=1,2$ being the space dimension. The discretization parameters are listed in ~\Cref{tab:mesh_params_all}.
Unless stated otherwise, the adaptive algorithm (\Cref{alg:Lscheme-adapt}) uses the parameters $L = 1.0$, $\theta = 0.8$, $\varepsilon_{\mathrm{adapt}} = 0.1$, $r_{\mathrm{ref}} = 4$, $\delta_{\mathrm{lin}} = 0.2$, and $\texttt{Tol}=10^{-8}$. The maximum number of iterations is chosen as $i_{\max}\in\{50,100\}$ depending on the problem. Similarly, the maximum refinement level $\ell_{\max}\in\{8,\ldots,15\}$ varies across the considered test cases. For the regularized Newton scheme, the regularization parameter is fixed to $\varepsilon = 0.1$ throughout all simulations in this section.

\begin{table}[h!]
\centering
\caption{ Mesh and model parameters for space-time experiments on the domain $[-8,8]$ (or $[-8,8]^2$ in 2D) and time interval $[0,1]$.}

\resizebox{\textwidth}{!}{%
\begin{tabular}{llccc}
\toprule
 & \textbf{Mesh} & $n_{\text{space}}$ & $n_{\text{time}}$ & $m$ \\
\midrule

\multirow{3}{*}{1D}
& Adaptive ref. & 16 & 2 
& PME: $2,4,6$; Others: $2$ \\

& Uniform ref. & 16 & 2 
& PME: $2,4,6$; Others: $2$ \\

& Fixed mesh 
& 160 (PME), 80 Others 
& 20 (PME), 10 Others 
& PME: $2,4,6$; Others: $2$\\

\midrule

\multirow{2}{*}{2D}
& Adaptive ref. 
& $16 \times 16$ 
& 4 
& PME: $2,4,6$; Others: $2$ \\

& Fixed mesh 
& $32 \times 32$ 
& 4 
& PME: $2,4,6$; Others: $2$ \\

\bottomrule
\end{tabular}%
}

\label{tab:mesh_params_all}
\end{table}

\vspace{2mm}
In $d$ spatial dimensions ($d=1$ or $d=2$), we employ the Barenblatt solution \cite{vazquez2007porous} at $t=0$ as our initial condition for all test cases. For $m>1$ and for $\nu = \frac{1}{(m-1 + \frac{2}{d})}$, this solution is given by
\begin{align}\label{eq:Barenblatt}
  u_{\,\!_{\rm BB}}(\bm{x},t) = (1 + t)^{-\nu} \left[\max\left(\gamma - \frac{\nu(m-1)|\bm{x}|^2}{2dm(t+1)^{2\nu/d}}, 0\right)\right]^{1/(m-1)}.
\end{align}
Homogeneous Dirichlet boundary conditions are imposed on $\Gamma_T=\partial\Omega\times(0,T]$.
For the porous medium equation with exponents $m$ (see \eqref{eq:PME}), $ u_{\,\!_{\rm BB}}$ is also the exact solution. This enables us to compute the exact error for this case. 

\subsection{Porous medium equation (PME)}\label{sec:porous_medium_equation}
We consider the porous medium equation
\begin{equation}\label{eq:PME}
\partial_t u = \Delta u^m \quad \text{ for }\quad  m>1
\end{equation}
which corresponds to $\Phi(u)=u^m$. It shows degeneracy at $u = 0$ since $\Phi'$ vanishes. Following the $b$--$B$ decomposition of \Cref{lemma:b_B_construction} (introduced in \cite{javed2025robust}), we reformulate the PME as $\partial_t b(s) - \Delta B(s) = 0$. Let $u^*>0$ satisfy $\Phi'(u^*)=1$, that is, $u^*=\left(\frac{1}{m}\right)^{\frac{1}{m-1}}$. Then the functions $b$ and $B$ admit the explicit representations
\begin{align}\label{eq:bB_PME}
b(s)=
\begin{cases}
0, & s<0,\\[4pt]
s, & 0\le s\le u^*,\\[4pt]
\bigl(s-u^*+(u^*)^m\bigr)^{1/m}, & s>u^*,
\end{cases}
\qquad
B(s)=
\begin{cases}
s, & s<0,\\[4pt]
s^m, & 0\le s\le u^*,\\[4pt]
s-u^*+(u^*)^m, & s>u^*.
\end{cases}
\end{align}
 Since $u_{\,\!_{\rm BB}}$, the exact solution of the porous medium equation is known, we can determine $s$ exactly, and use it to assess the quality of the scheme.
\subsubsection{1D PME}\label{sec:1DPME_1DST}


\begin{figure}[h]
\centering


  \includegraphics[width=0.75\linewidth]{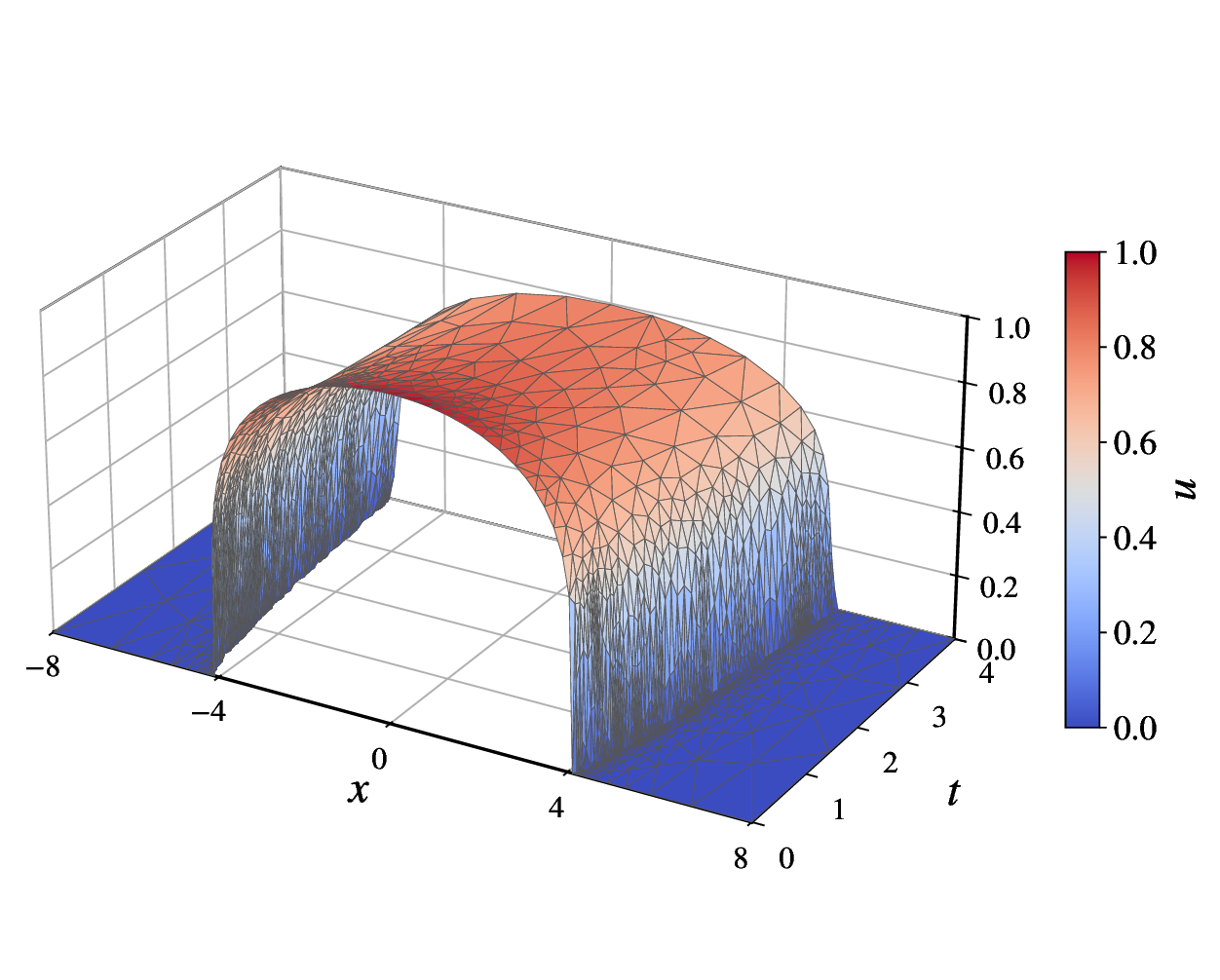}
\caption{[Section~\ref{sec:1DPME_1DST}, $m=6$, $\gamma=1.0$, $T=4$] Three-dimensional view of the numerical solution $u = b(s)$ of the porous medium equation on the final adapted space-time mesh.}
\label{fig:mesh_refinement_summary_1d}
\end{figure}
Figure~\ref{fig:mesh_refinement_summary_1d} illustrates the behaviour of the solution for the 1D PME evaluated using the adaptive refinement strategy. It is seen that the estimator $\eta^i_K$ of \eqref{eq:etaK} drives refinement in regions where the spatial gradients and temporal variations are high, i.e., at the free boundaries and close to $t=0$. 
This is further confirmed in Figure~\ref{fig:true_error_vs_estimator}, which compares the element-wise true error (Left) and the effectivity indices (Right) introduced in \eqref{eq:effectivity} over successive adaptive refinement levels. It is seen that errors are concentrated near the free boundary with much lower concentration at the degenerate $s=0$ region. The refinement happens accordingly, localized to the free-boundary and the initial condition regions. The errors decrease systematically as the mesh is refined. The effectivity indices are of the order of 1 in the region $\{s>0\}$ including the free-boundaries. However, in the region $s=0$ they become very small since the errors $\|e\|_K$ are practically zero.

\noindent
\noindent
\begin{center}
\setlength{\tabcolsep}{2pt}
\setlength{\LTpre}{0pt}
\setlength{\LTpost}{0pt}
\begin{longtable}{cc}

 \multicolumn{1}{c}{\textbf{$\log_{10}(\|e\|_K)$ -- Element-wise true error}} &
\multicolumn{1}{c}{\textbf{$\log_{10}(\textit{Eff. Index})$ -- Effectivity index}}\\[4pt]


\includegraphics[width=0.49\textwidth]{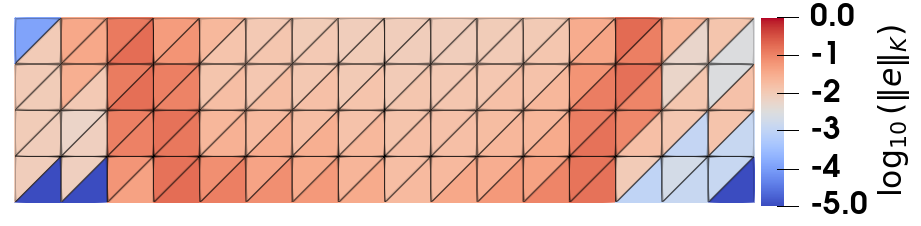} &\includegraphics[width=0.49\textwidth]{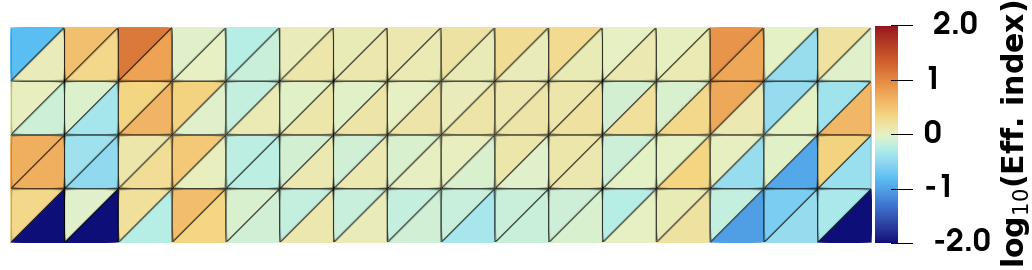} \\
{\small $\log_{10}(\|e\|_K)$ --- level 0} 
&{\small $\textit{Eff. Index} :={\|e\|_K}/\eta_K $,\; level 0} %
\\[6pt]
\includegraphics[width=0.49\textwidth]{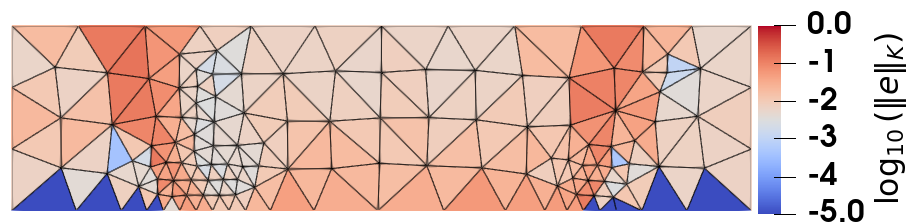}
& \includegraphics[width=0.49\textwidth]{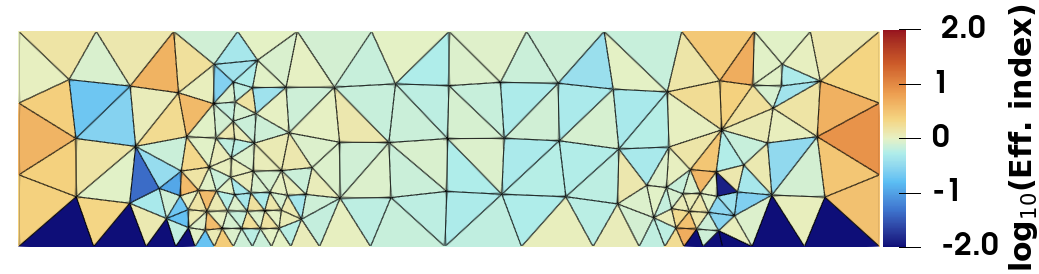}\\
{\small $\log_{10}(\|e\|_K)$ --- level 1}
&{\small $\textit{Eff. Index} :={\|e\|_K}/\eta_K $,\; level 1}\\[6pt]

\includegraphics[width=0.49\textwidth]{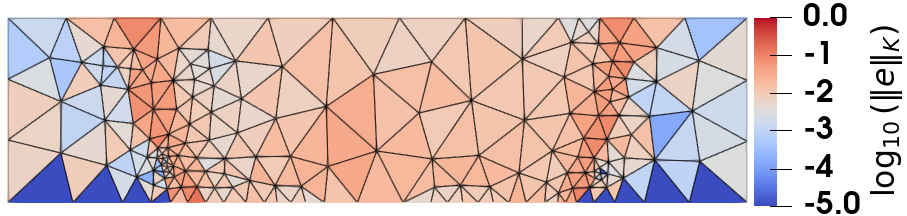} &\includegraphics[width=0.49\textwidth]{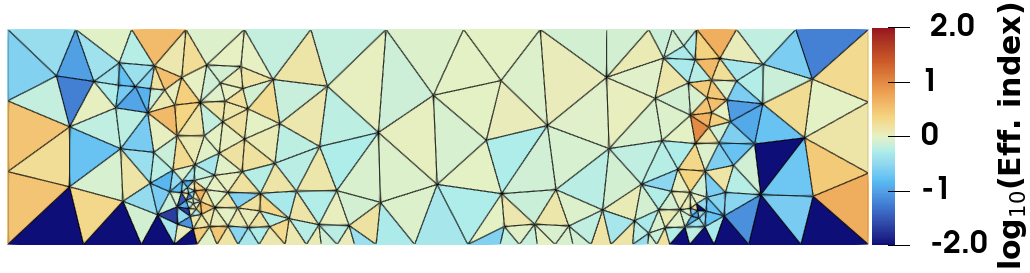} \\
{\small $\log_{10}(\|e\|_K)$ --- level 2} &
{\small $\textit{Eff. Index} :={\|e\|_K}/\eta_K $,\; level 2}\\[6pt]


\includegraphics[width=0.49\textwidth]{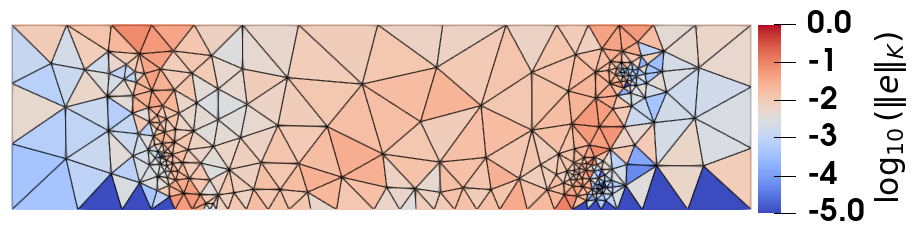} 
& \includegraphics[width=0.49\textwidth]{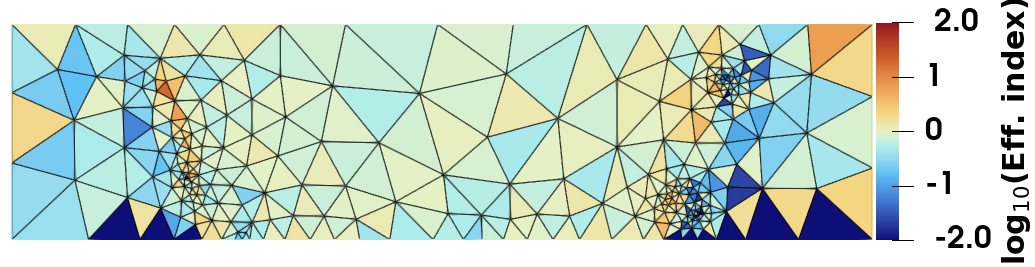}\\
{\small  $\log_{10}(\|e\|_K)$ --- level 3} 
&{\small $\textit{Eff. Index} :={\|e\|_K}/\eta_K $,\; level 3}
\\[6pt]
\includegraphics[width=0.49\textwidth]{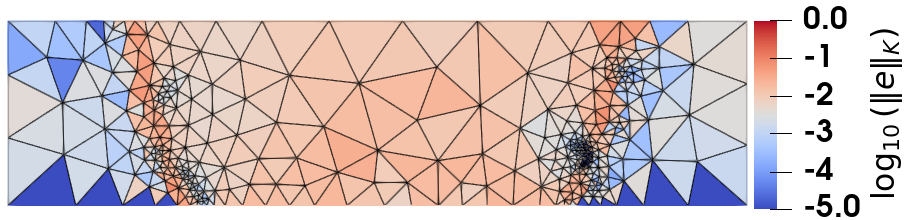} &
\includegraphics[width=0.49\textwidth]{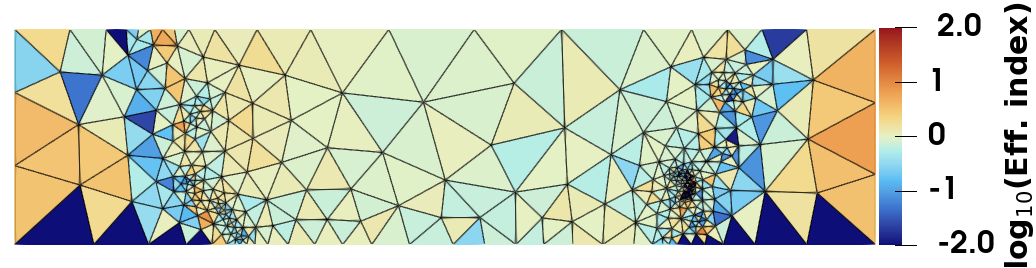} \\
{\small $\log_{10}(\|e\|_K)$ --- level 4} &
{\small $\textit{Eff. Index} :={\|e\|_K}/\eta_K $,\; level 4}
\\[6pt]


\includegraphics[width=0.49\textwidth]{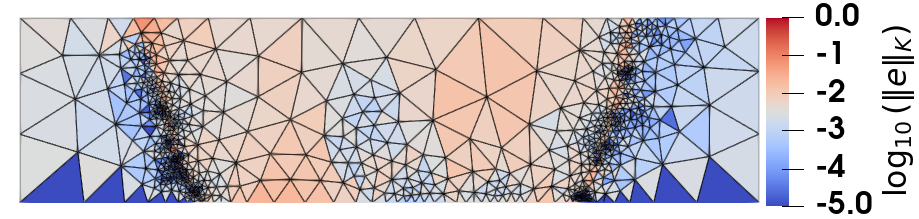} &
\includegraphics[width=0.49\textwidth]{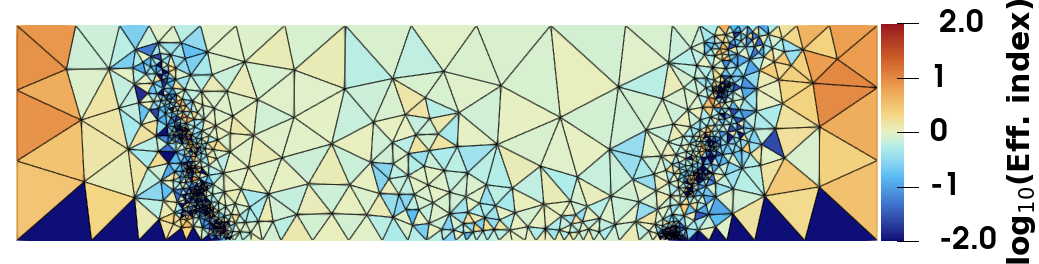} 
 \\
{\small $\log_{10}(\|e\|_K)$ --- level 6} 
&{\small $\textit{Eff. Index} :={\|e\|_K}/\eta_K $,\; level 6}\\[6pt]



\includegraphics[width=0.49\textwidth]{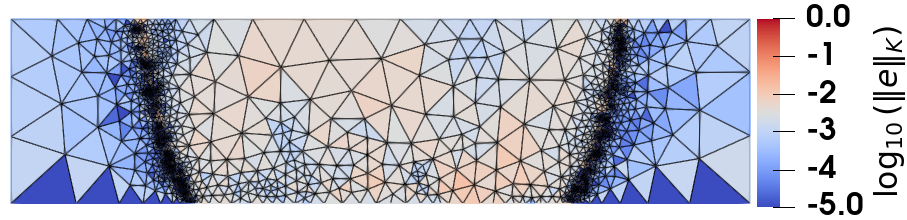} &
\includegraphics[width=0.49\textwidth]{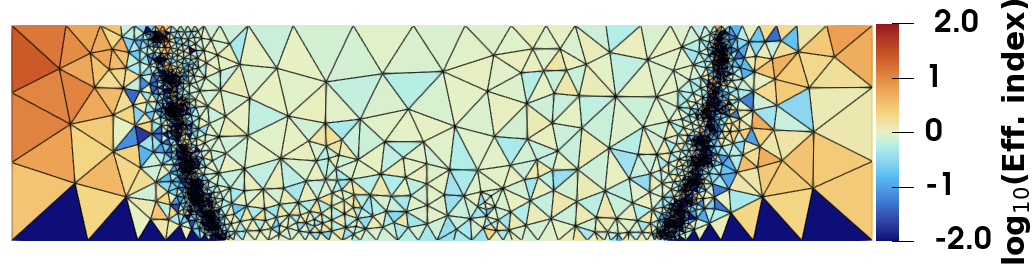} \\
{\small $\log_{10}(\|e\|_K)$ --- level 9} &
{\small $\textit{Eff. Index} :={\|e\|_K}/\eta_K $,\; level 9}
\\[6pt]
\end{longtable}
\captionof{figure}{[Section~\ref{sec:1DPME_1DST}, $m=6$, $\gamma=1.0$, $T=4$] \textbf{Left column:} 
    Element-wise error $\|e\|_K$ defined in \eqref{eq:effectivity} in $\log_{10}$-scale over multiple levels of refinement. \textbf{Right column:} Element-wise effectivity indices in $\log_{10}$-scale. }

\label{fig:true_error_vs_estimator}
\end{center}

\Cref{Figure:1DPME_m=2_dof} shows the error decays with the cost $\mathrm{Cost}(i,\ell)$ defined in \eqref{eq:cost} comparing the adaptive, uniform, and fixed mesh strategies for the $L$-scheme with $m=2$. Both adaptive and uniform strategies reach lower error levels, and the adaptive strategy performs and scales more favorably, albeit slightly for $m=2$ (better performance is observed for higher $m$ values depicted later). \Cref{Figure:1DPME_m=2_iterationno} shows the error decay in different metrics with respect to total iteration number from \eqref{eq:cost}. The adaptive strategy requires more iterations due to the projection process after each refinement step which introduces additional errors. We will see similar behaviour later on. \Cref{Figure:1DPME_Newton_m=2_dof} shows the error decay for the Adapt-$N_{\mathrm{reg}}$, Uni-$N_{\mathrm{reg}}$, and $N_{\mathrm{reg}}$ (fixed mesh) schemes. Similar to \Cref{fig:iter-combined}, these schemes exhibit significantly more oscillations compared to the \(L\)-schemes, even in the $L^2$-norm. Even for uniform refinements, the oscillations persist, as opposed to the \(L\)-schemes, see \Cref{fig:compasion-Uni-Adapt} (Left).  Thus, as concluded also in \cite{mitra2023guaranteed}, the faster convergence of the linearization scheme does not necessarily translate to faster overall convergence in terms of total error. Henceforth, we will focus only on the metric $\|b(s_{h\tau}^{(i)}) - u_{\text{exact}}\|_{L^2(Q)}$ since \Cref{theo:converegence} predicts this metric to vanish as the iterations progress, and also since \Cref{fig:iter-combined,Figure:1DPME_m=2_dof,Figure:1DPME_m=2_iterationno,Figure:1DPME_Newton_m=2_dof} show the errors to oscillate less in this measure.  

\begin{figure}[h]
    \centering
    \begin{subfigure}[b]{0.49\textwidth}
        \includegraphics[width=\textwidth]
{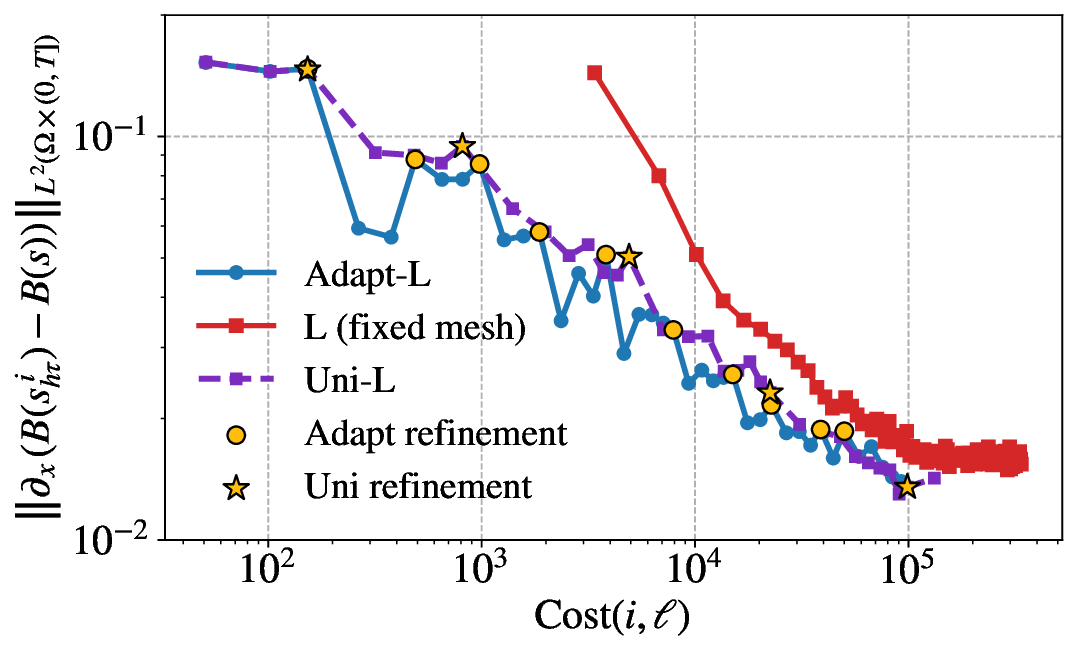}       
    \end{subfigure}
    \hfill
    \begin{subfigure}[b]{0.49\textwidth}
        \includegraphics[width=\textwidth]
{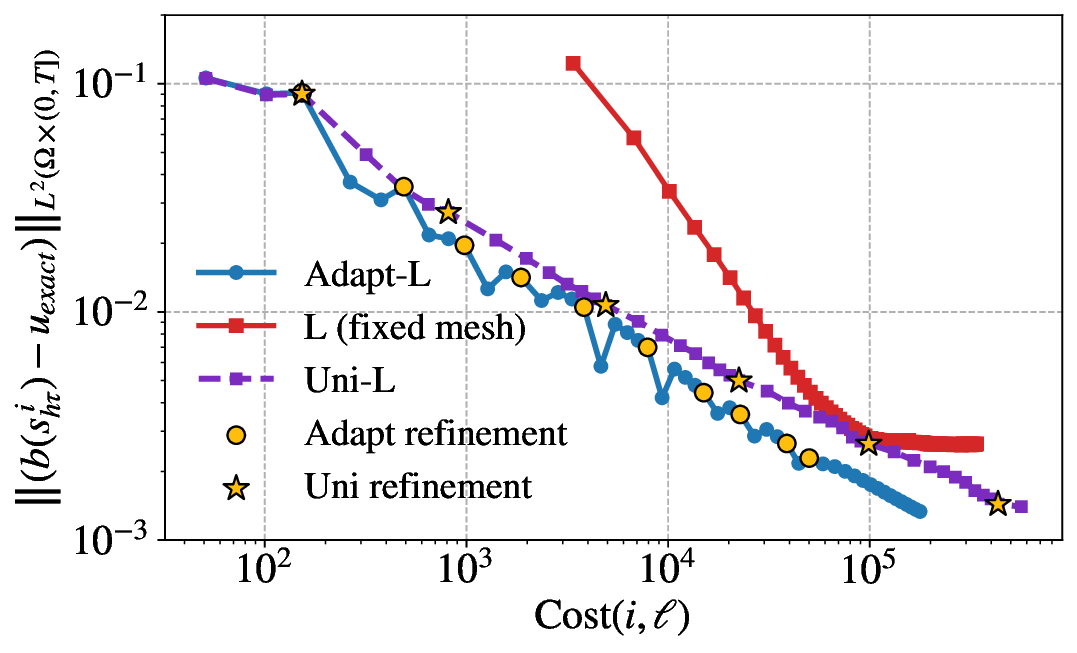}
    \end{subfigure}
    \centering
    \caption{[\Cref{tab:mesh_params_all}, Section \ref{sec:1DPME_1DST}, $m=2$] Comparison of errors versus $\mathrm{Cost}(i,\ell)$ \eqref{eq:cost} for fixed, uniform, and adaptive $L$-schemes. 
\textbf{Left}: Error in terms of $\|\nabla\!\left(B(s_{h\tau}^{(i)}) - B(s)\right)\|_{L^2(Q)}$. 
\textbf{Right}: Error in terms of $\|b(s_{h\tau}^{(i)}) - u_{\text{exact}}\|_{L^2(Q)}$. 
}
\label{Figure:1DPME_m=2_dof}
\end{figure}

\begin{figure}[h]
    \centering
    \begin{subfigure}[b]{0.49\textwidth}
        \includegraphics[width=\textwidth]       {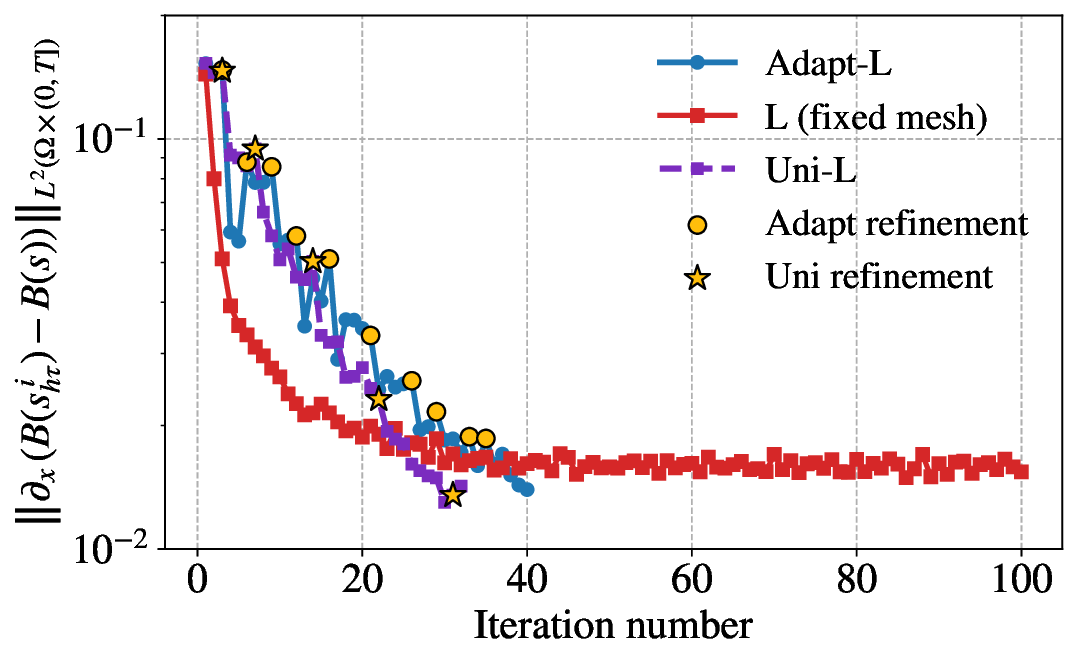}       
    \end{subfigure}
    \hfill
    \begin{subfigure}[b]{0.49\textwidth}
        \includegraphics[width=\textwidth]
{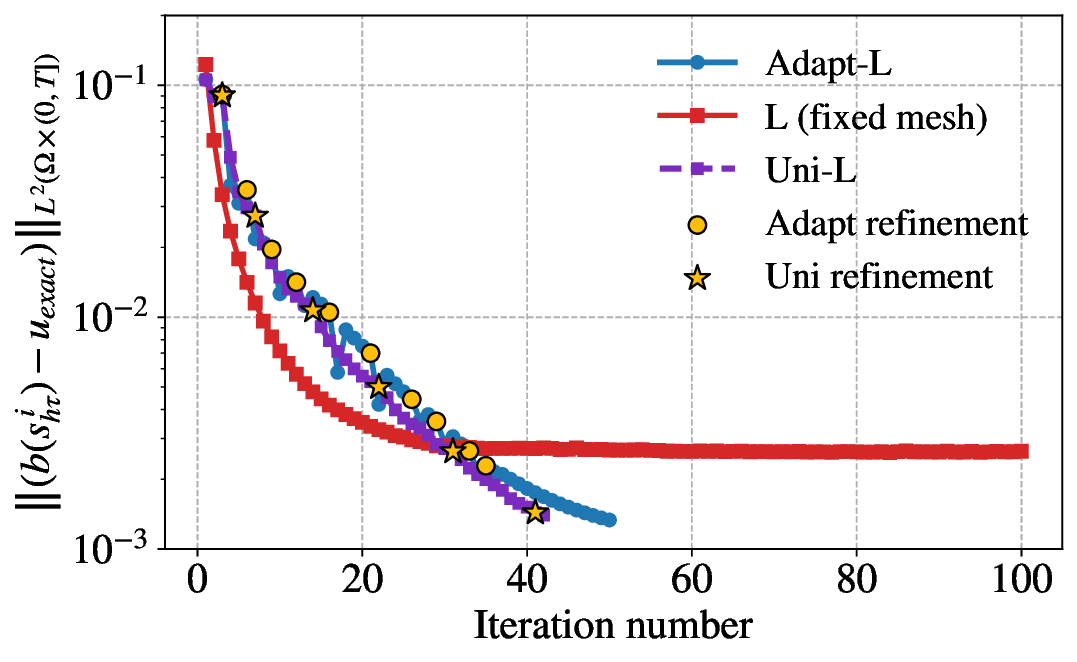}

    \end{subfigure}
    \centering
     \caption{[\Cref{tab:mesh_params_all}, Section \ref{sec:1DPME_1DST}, $m=2$] Comparison of errors versus $\mathrm{IterNo}(i,\ell)$ \eqref{eq:cost} for fixed, uniform and adaptive $L$-schemes.
\textbf{Left}: Error in terms of $\|\nabla\!\left(B(s_{h\tau}^{(i)}) - B(s)\right)\|_{L^2(Q)}$. 
\textbf{Right}: Error in terms of $\|b(s_{h\tau}^{(i)}) - u_{\text{exact}}\|_{L^2(Q)}$. 
}
\label{Figure:1DPME_m=2_iterationno}
\end{figure}

\begin{figure}[h]
    \centering
    \begin{subfigure}[b]{0.49\textwidth}
        \includegraphics[width=\textwidth]{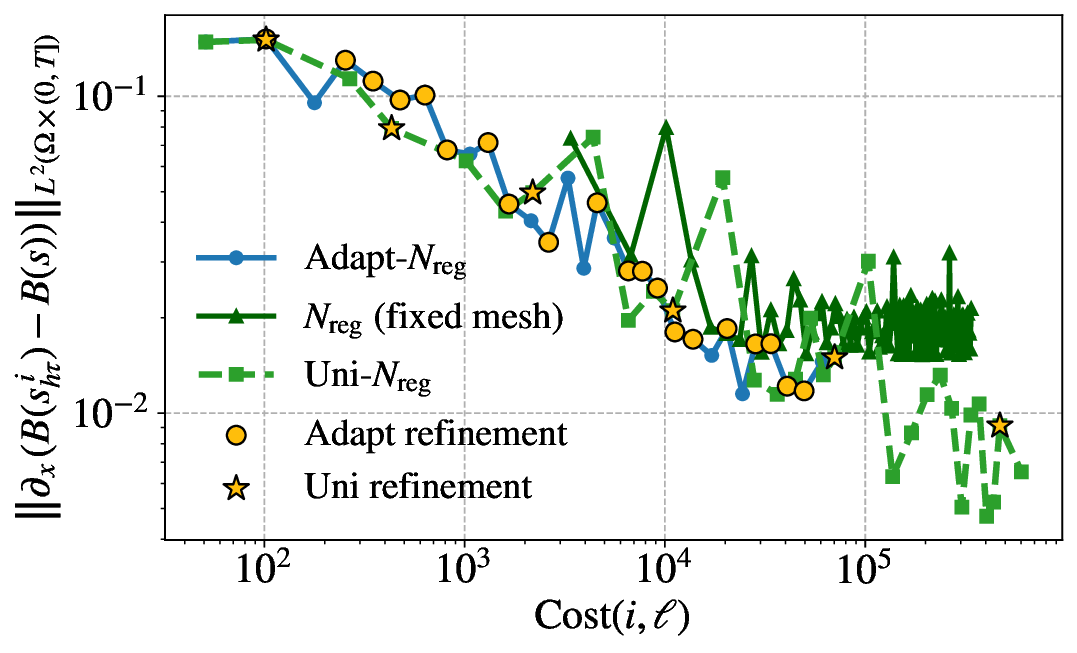}
    \end{subfigure}
    \hfill
    \begin{subfigure}[b]{0.49\textwidth}
        \includegraphics[width=\textwidth]
{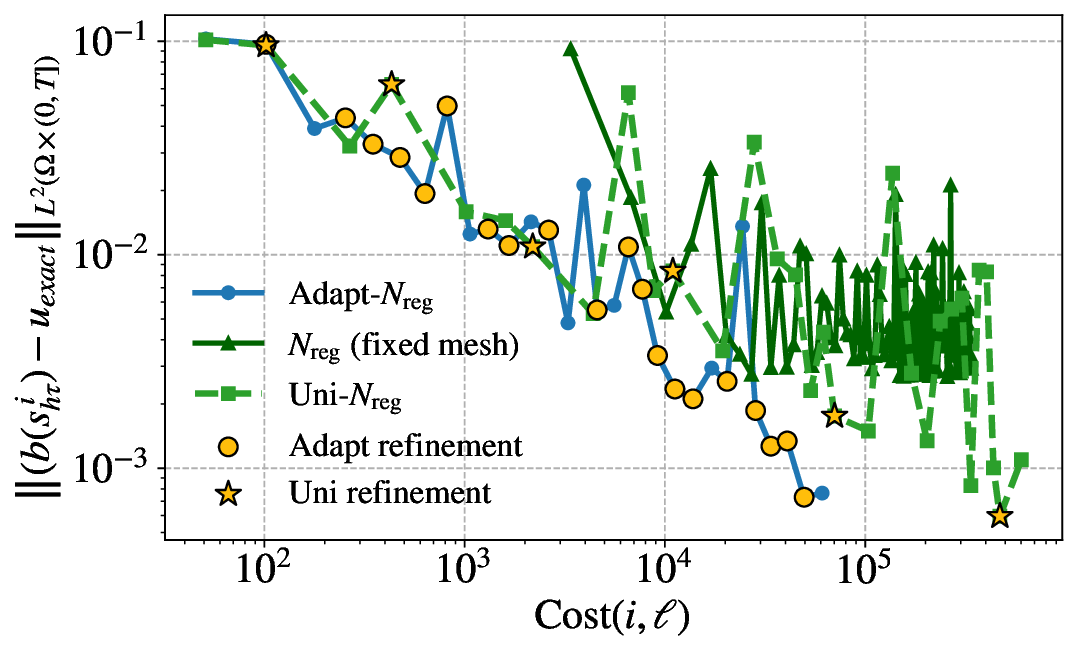}
    \end{subfigure}
    \centering
    \caption{[\Cref{tab:mesh_params_all}, Section \ref{sec:1DPME_1DST}, $m=2$, $\delta_{\mathrm{lin}}=1.0$] Comparison of errors versus $\mathrm{Cost}(i,\ell)$ for fixed, uniform and adaptive $N_{\mathrm{reg}}$-schemes.
\textbf{Left}: Error in terms of  $\|\nabla_x\!\left(B(s_{h\tau}^{(i)}) - B(s)\right)\|_{L^2(Q)}$. 
\textbf{Right}: Error in terms of  $\|b(s_{h\tau}^{(i)}) - u_{\text{exact}}\|_{L^2(Q)}$. 
}
\label{Figure:1DPME_Newton_m=2_dof}
\end{figure}

\begin{figure}[h]
    \centering
   \includegraphics[width=0.47\linewidth]{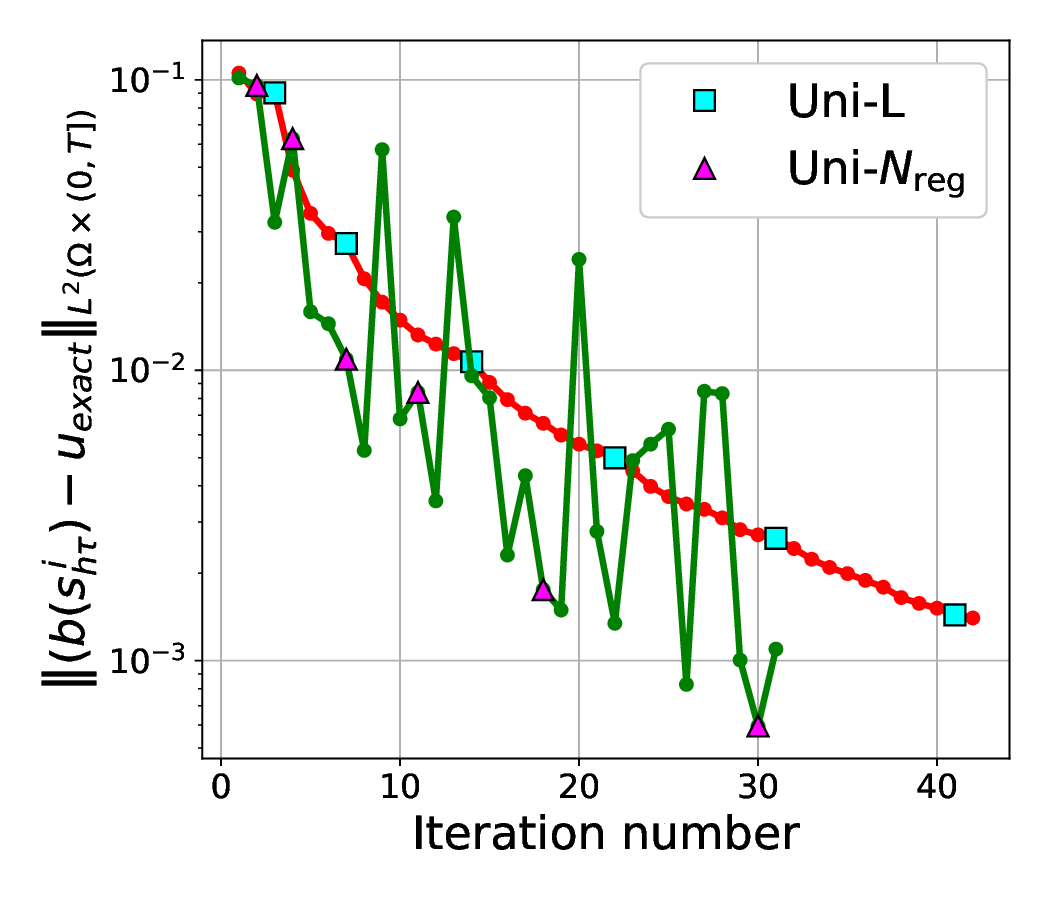}   
    \includegraphics[width=0.49\linewidth]{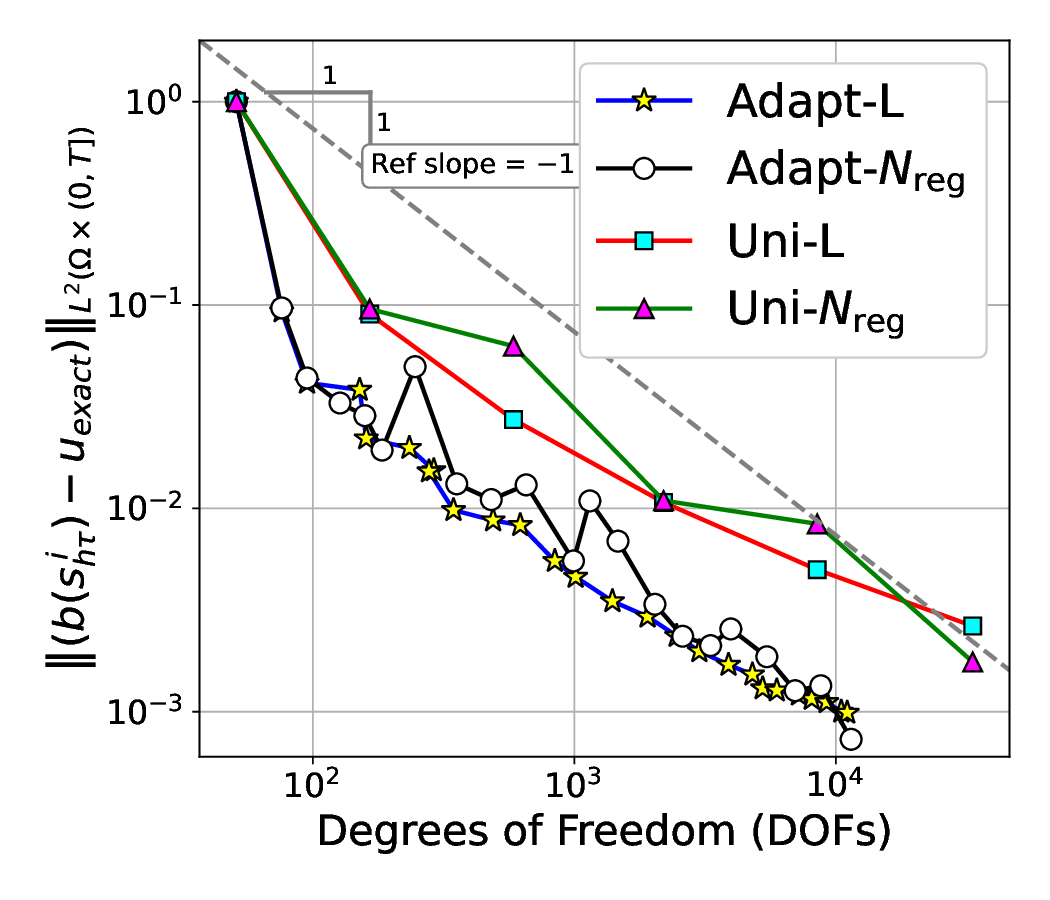}
    \caption{[\Cref{tab:mesh_params_all}, Section \ref{sec:1DPME_1DST}, $m=2$] \textbf{Left}: Comparison of errors versus the number of iterations under successive uniform mesh refinement. Each marker corresponds to a refinement step where the mesh size is halved.
\textbf{Right}: Comparison of errors versus the degrees of freedom ($|\mathrm{DOF}|_\ell$) for uniform and adaptive space-time meshes $\calT^\ell_\htt$ as the iterations progress. 
}
    \label{fig:compasion-Uni-Adapt}
\end{figure}

\Cref{fig:compasion-Uni-Adapt} (Right) compares the error decay with respect to the degrees of freedom associated with each mesh level $\ell$ visited, for both the uniform and adaptive strategies. Again, the adaptive strategy shows better performance, demonstrating almost first-order convergence. \Cref{Figure:1DPME_m=4_dof,Figure:1DPME_m=6_dof}
show the decay of errors with respect to costs (Left) and iteration number (Right) for porous medium exponents $m=4$ and $6$ respectively. The gap between adaptive and uniform strategies is even more pronounced in these cases, with the adaptive scheme incurring costs one order of magnitude less. This is due to the sharp interfaces being more localized for higher $m$ values, resulting in the adaptive scheme being more effective.


\begin{figure}[h]
    \centering
    \begin{subfigure}[b]{0.49\textwidth}
        \includegraphics[width=\textwidth]
{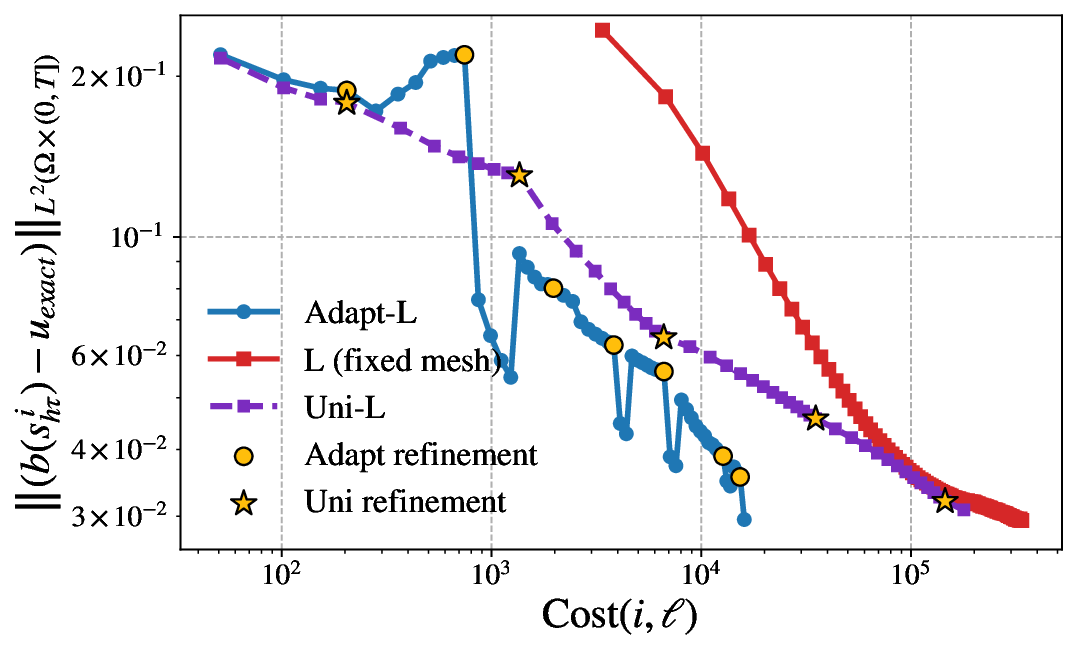}
    \end{subfigure}
   \hfill
   \begin{subfigure}[b]{0.49\textwidth}
\includegraphics[width=\textwidth]
{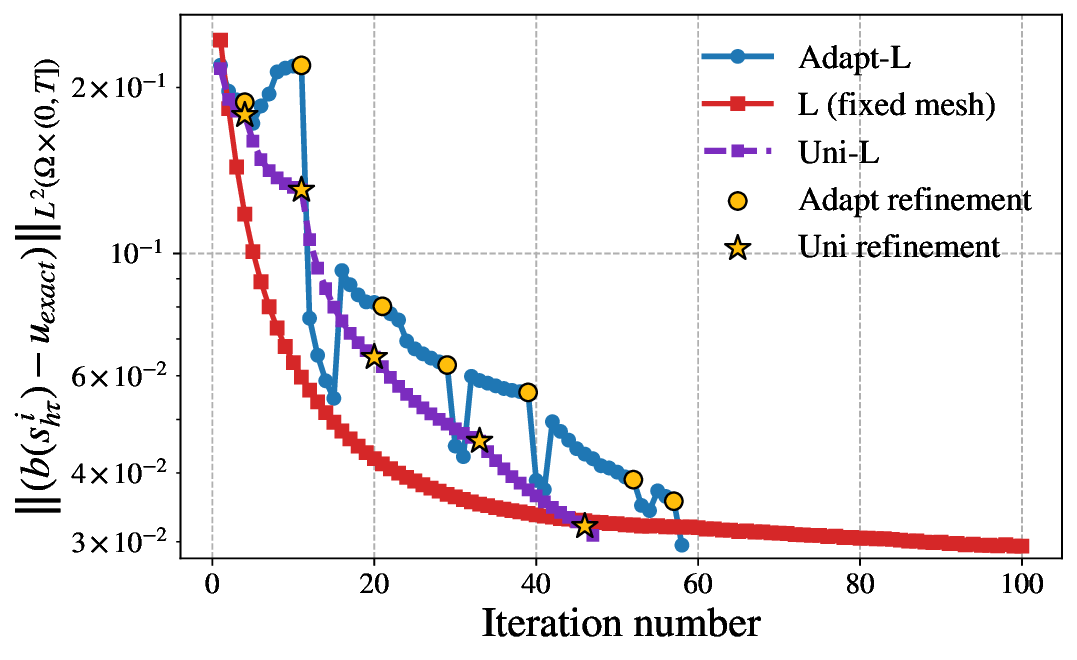}
   \end{subfigure}
    \centering
    \caption{[\Cref{tab:mesh_params_all}, Section \ref{sec:1DPME_1DST}, $m=4$]  Comparison of  error $\|b(s_{h\tau}^{(i)}) - u_{\mathrm{exact}}\|_{L^2(Q)}$ 
for fixed, uniform, and adaptive $L$-schemes. 
\textbf{Left:} error versus $\mathrm{Cost}(i,\ell)$. 
\textbf{Right:} error versus $\mathrm{IterNo}(i,\ell)$. 
}
\label{Figure:1DPME_m=4_dof}

\end{figure}

\begin{figure}[h]
    \centering
    \begin{subfigure}[b]{0.49\textwidth}
        \includegraphics[width=\textwidth]{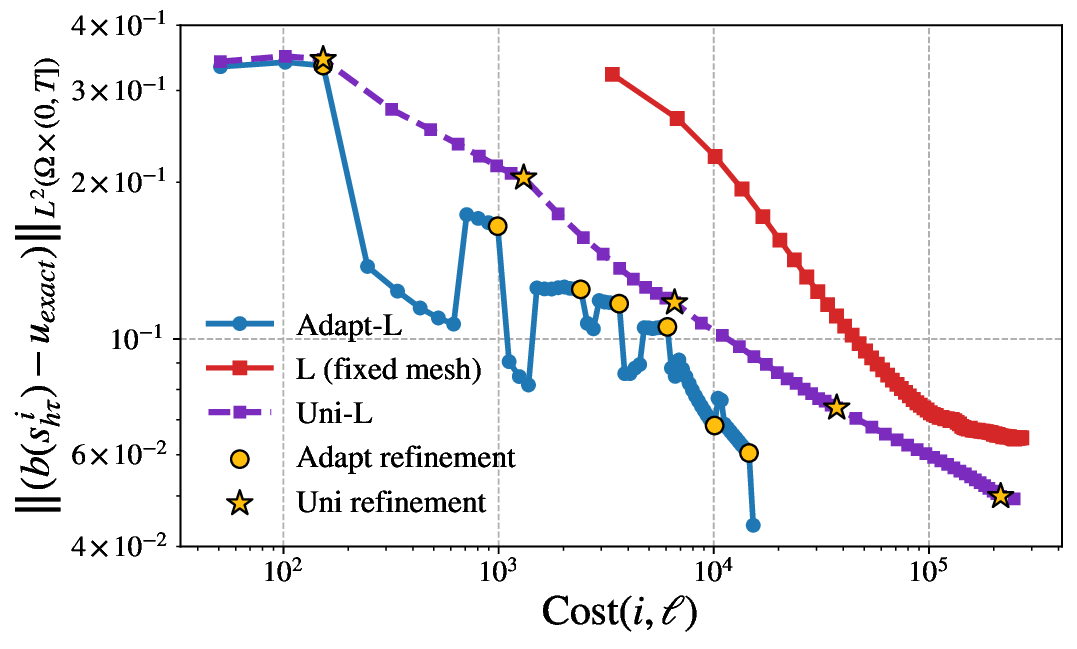}
    \end{subfigure}
    \hfill
    \begin{subfigure}[b]{0.49\textwidth}
        \includegraphics[width=\textwidth]
{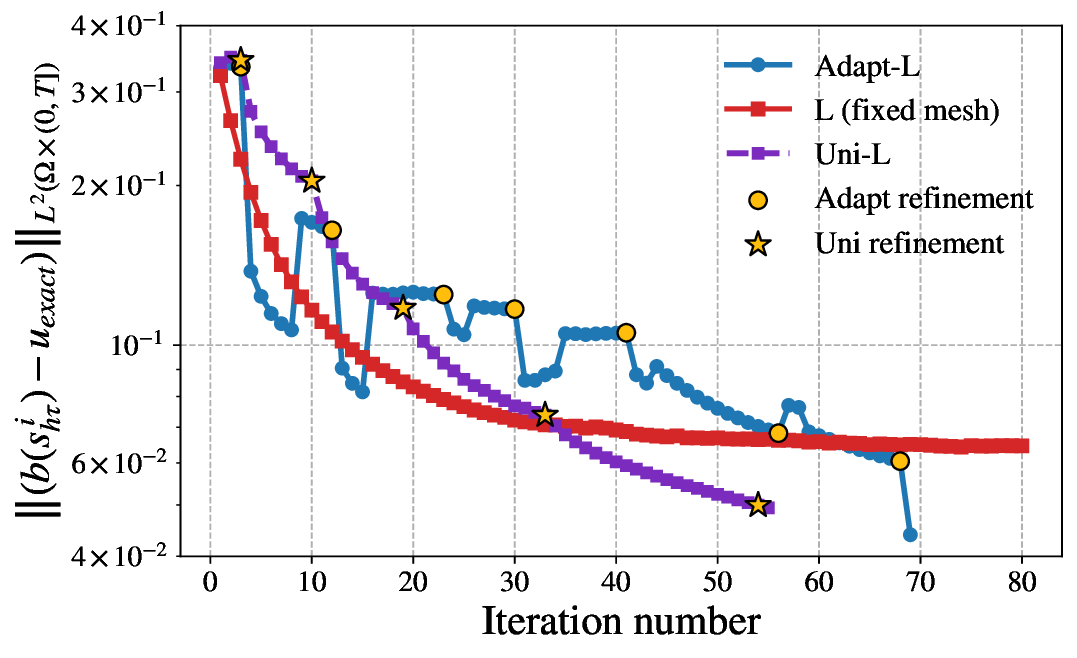}
    \end{subfigure}
 \caption{[\Cref{tab:mesh_params_all}, Section \ref{sec:1DPME_1DST}, $m=6$] Comparison of  error $\|b(s_{h\tau}^{(i)}) - u_{\mathrm{exact}}\|_{L^2(Q)}$ 
for fixed, uniform, and adaptive $L$-schemes. 
\textbf{Left:} error versus $\mathrm{Cost}(i,\ell)$. 
\textbf{Right:} error versus $\mathrm{IterNo}(i,\ell)$. 
}
\label{Figure:1DPME_m=6_dof}
\end{figure}
\subsubsection{2D PME}\label{2DPME_2DST}

Next, we look at numerical results for the $L$-scheme in two space dimensions, i.e., $d=2$. Starting with parameters from \Cref{tab:mesh_params_all} and the Barenblatt initial condition \eqref{eq:Barenblatt},
\Cref{fig:cube_mesh_refinement_3d} (a) shows the solution $u=b(s)$ plotted over the final adapted mesh. 
The time-stacked slices in \Cref{fig:cube_mesh_refinement_3d} (b) show that refinement remains focused around the free boundary at all time levels. The spatial slices along the $x$-axis show that the mesh is fine only in a thin region where the solution has steep changes. The efficiency of the adaptive approach is further confirmed in \Cref{fig:2DPME_DOF_INTERATIONNO_M6}, where we compare the error against the cost and total number of iterations for fixed and adaptive strategies.  To achieve a prescribed accuracy, the adaptive method requires significantly fewer DoFs than both the fixed-mesh and the uniform refinement strategies. It also reaches much lower error levels. Note that uniform refinement becomes prohibitively more expensive for higher dimensional computations. Overall, we find the performance of the adaptive space-time scheme to be excellent, particularly for higher $m$ values and in higher space dimensions.

\begin{figure}[h]
\centering
\begin{subfigure}[t]{0.34\textwidth}
    \centering
    \includegraphics[width=\linewidth]{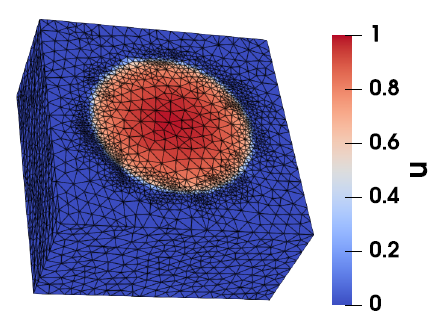}
    \caption{}
\end{subfigure}
\hfill
\begin{subfigure}[t]{0.32\textwidth}
    \centering
    \includegraphics[width=\linewidth]{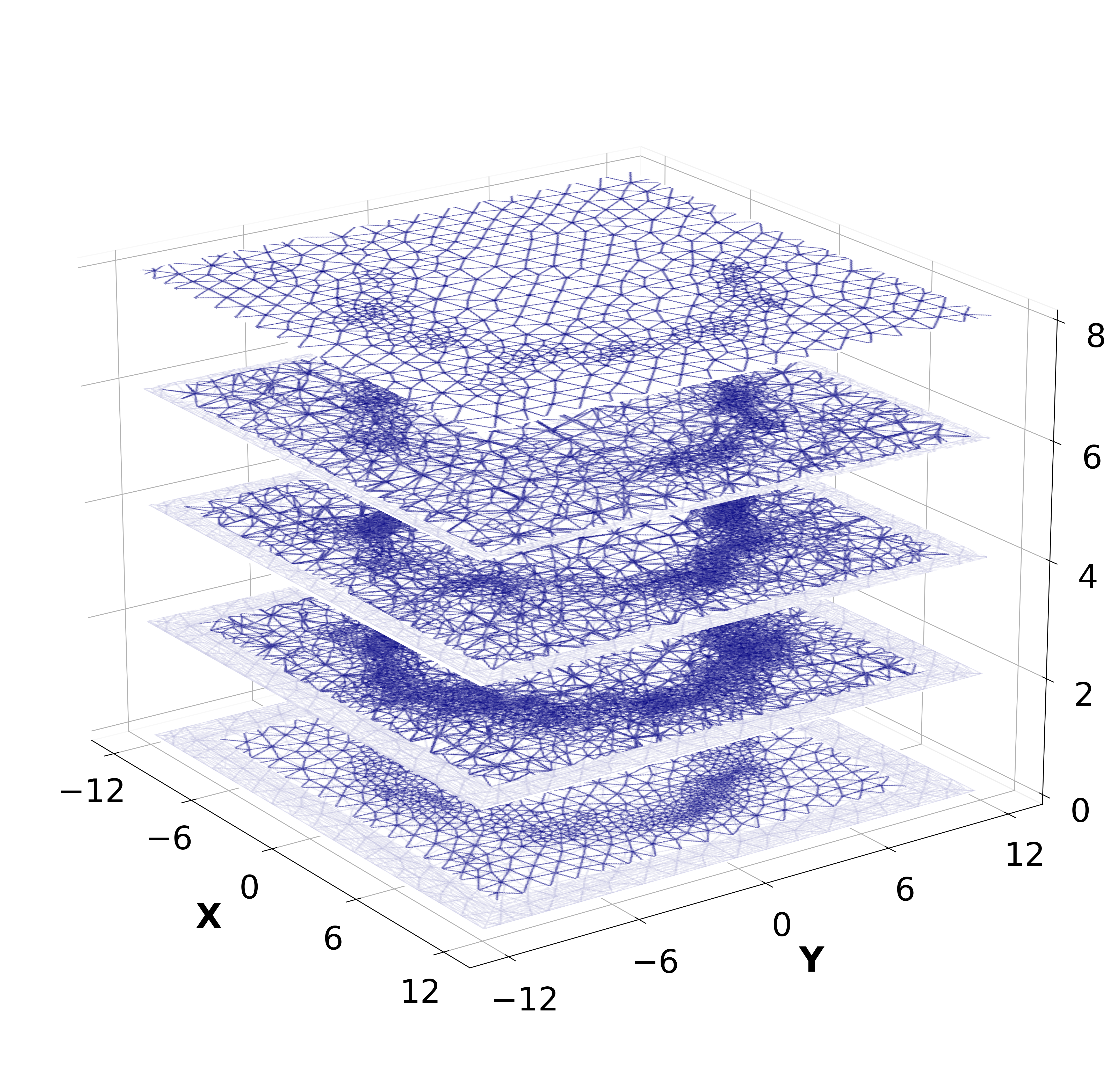}
    \caption{}
\end{subfigure}
\hfill
\begin{subfigure}[t]{0.32\textwidth}
    \centering
    \includegraphics[width=\linewidth]{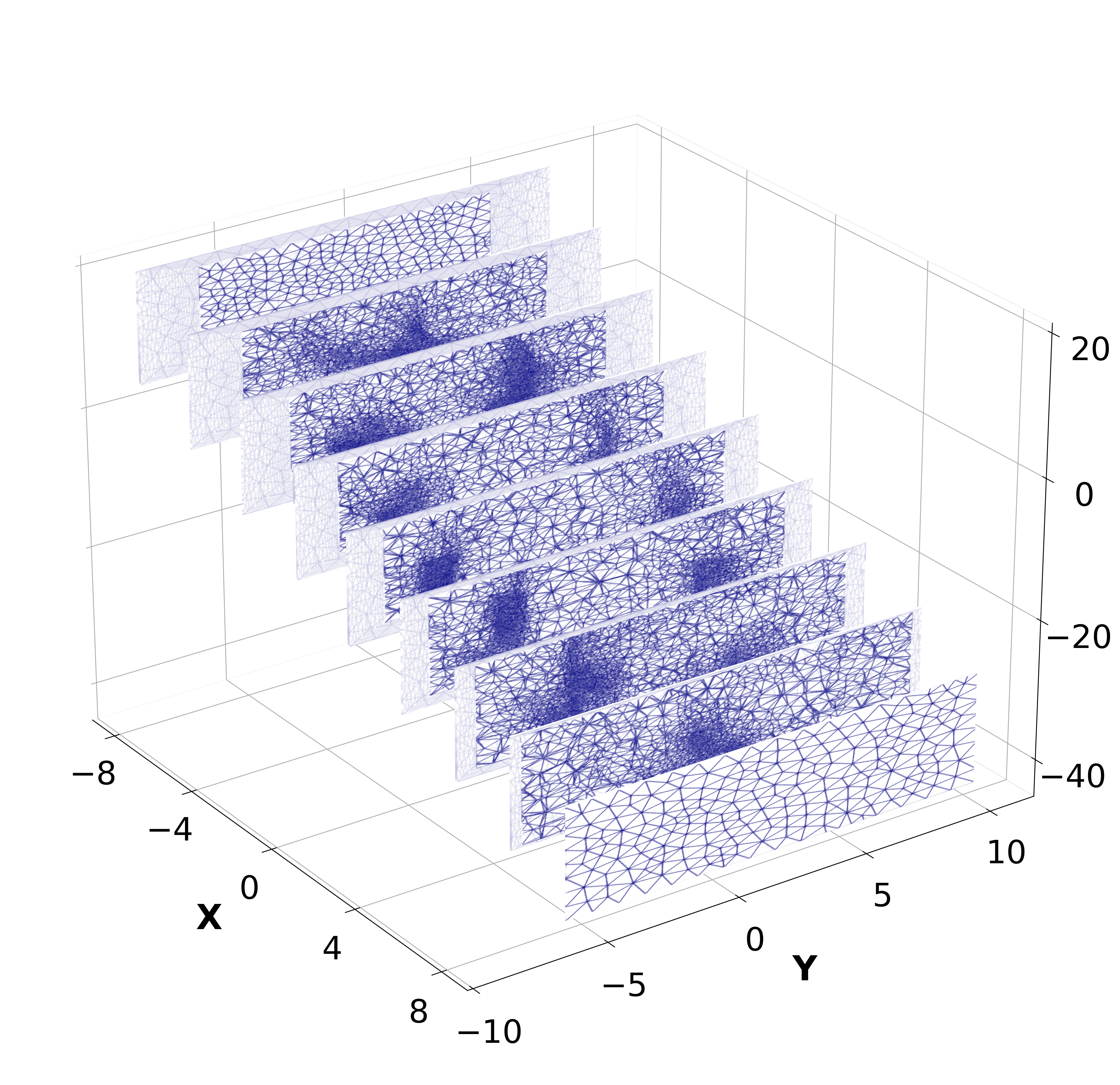}
    \caption{}
\end{subfigure}
\caption{[\Cref{tab:mesh_params_all,2DPME_2DST}, $m=6$, $T=8$] Numerical results for 2D PME. \textbf{(a)} Numerical solution $u=b(s)$ on the final adapted mesh in the $(x,y,t)$-domain. \textbf{(b)} Mesh slices stacked along the $t$-axis, illustrating the adaptive 
refinement across time levels. \textbf{(c)} Spatial mesh slices stacked along the $x$-axis.}
\label{fig:cube_mesh_refinement_3d}
\end{figure}

\begin{figure}[h]
    \centering
    \begin{subfigure}[b]{0.49\textwidth}
        \includegraphics[width=\textwidth]{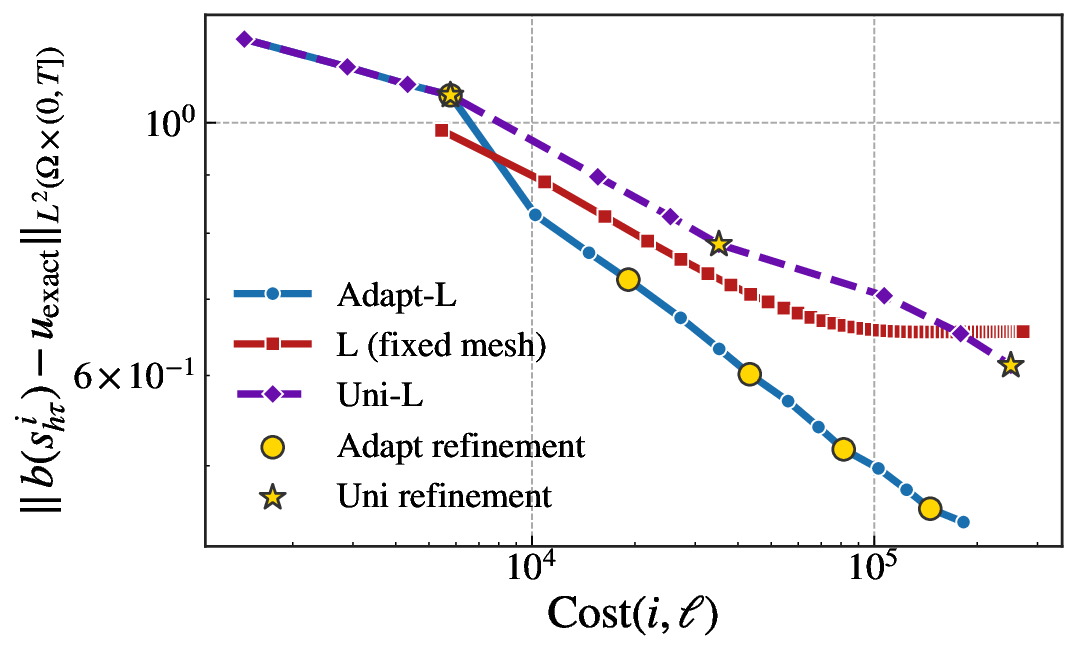}
    \end{subfigure}
     \hfill
    \begin{subfigure}[b]{0.49\textwidth}
        \includegraphics[width=\textwidth]
{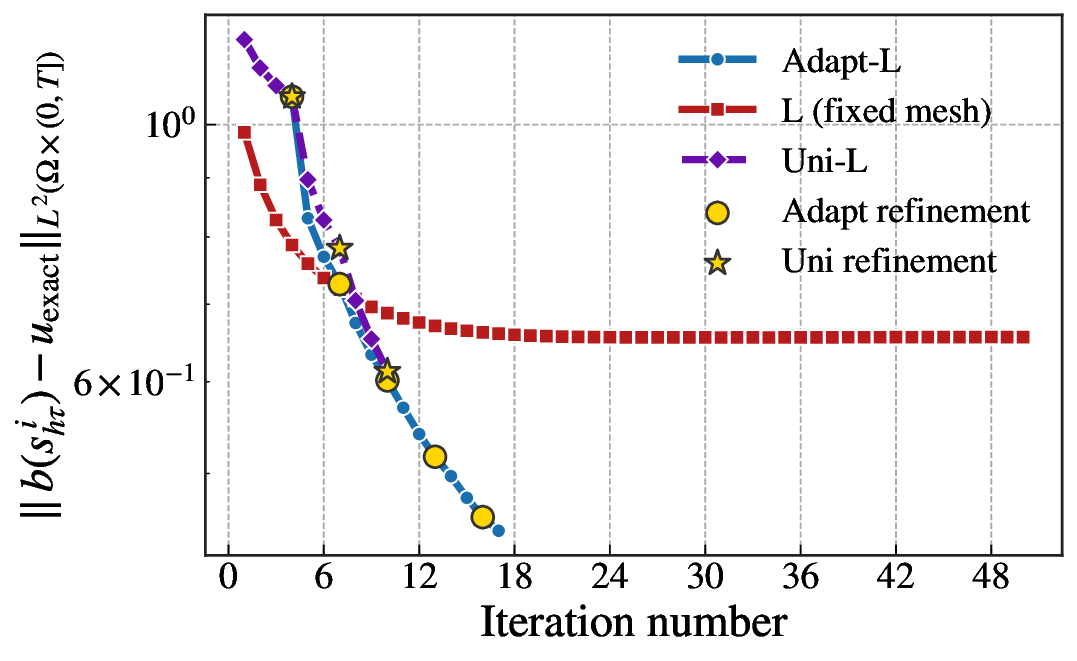}
    \end{subfigure}
    \centering
    \caption{[\Cref{tab:mesh_params_all}, Section~\ref{2DPME_2DST}, $m=6$]
Comparison of error $\|b(s_{h\tau}^{(i)}) - u_{\mathrm{exact}}\|_{L^2(Q)}$ for fixed and adaptive $L$-schemes. \textbf{Left:} error versus $\mathrm{Cost}(i,\ell)$. \textbf{Right:} error versus $\mathrm{IterNo}(i,\ell)$.
}
\label{fig:2DPME_DOF_INTERATIONNO_M6}

\end{figure}
\subsection{Biofilm growth model}\label{sec:Biofilm}
We consider the biofilm equation \cite{eberl2001new}
\begin{equation*}
\frac{\partial u}{\partial t} = \Delta \Phi(u) \qquad \text{ where } \qquad \Phi(u):=\left(\frac{u}{1-u}\right)^m \qquad \text{ with exponent }\; m>1.
\end{equation*}
We choose $m=2$ for this section. 
For the initial condition, $\g=0.8$ is additionally chosen in \eqref{eq:Barenblatt} so that $\max_\Om \Phi'(u_0)=200$. For these parameters, $u^* = 0.229083$ in \Cref{lemma:b_B_construction} is computed, satisfying $\Phi'(u^*)=1$. Then, the $b$, $B$ functions in \Cref{lemma:b_B_construction} become
\[
b(s)=
\begin{cases}
0, & s<0,\\
s, & 0\le s\le u^*,\\
\dfrac{(s-u^*+\Phi(u^*))^{1/m}}{1+(s-u^*+\Phi(u^*))^{1/m}}, & s>u^*,
\end{cases}
\quad
B(s)=
\begin{cases}
s, & s<0,\\
\dfrac{s^m}{(1-s)^m}, & 0\le s\le u^*,\\
s-u^*+\Phi(u^*), & s>u^*.
\end{cases}
\]


\begin{figure}[h]
\centering

\begin{subfigure}{0.42\textwidth}
  \centering
  \includegraphics[width=1.05\linewidth]{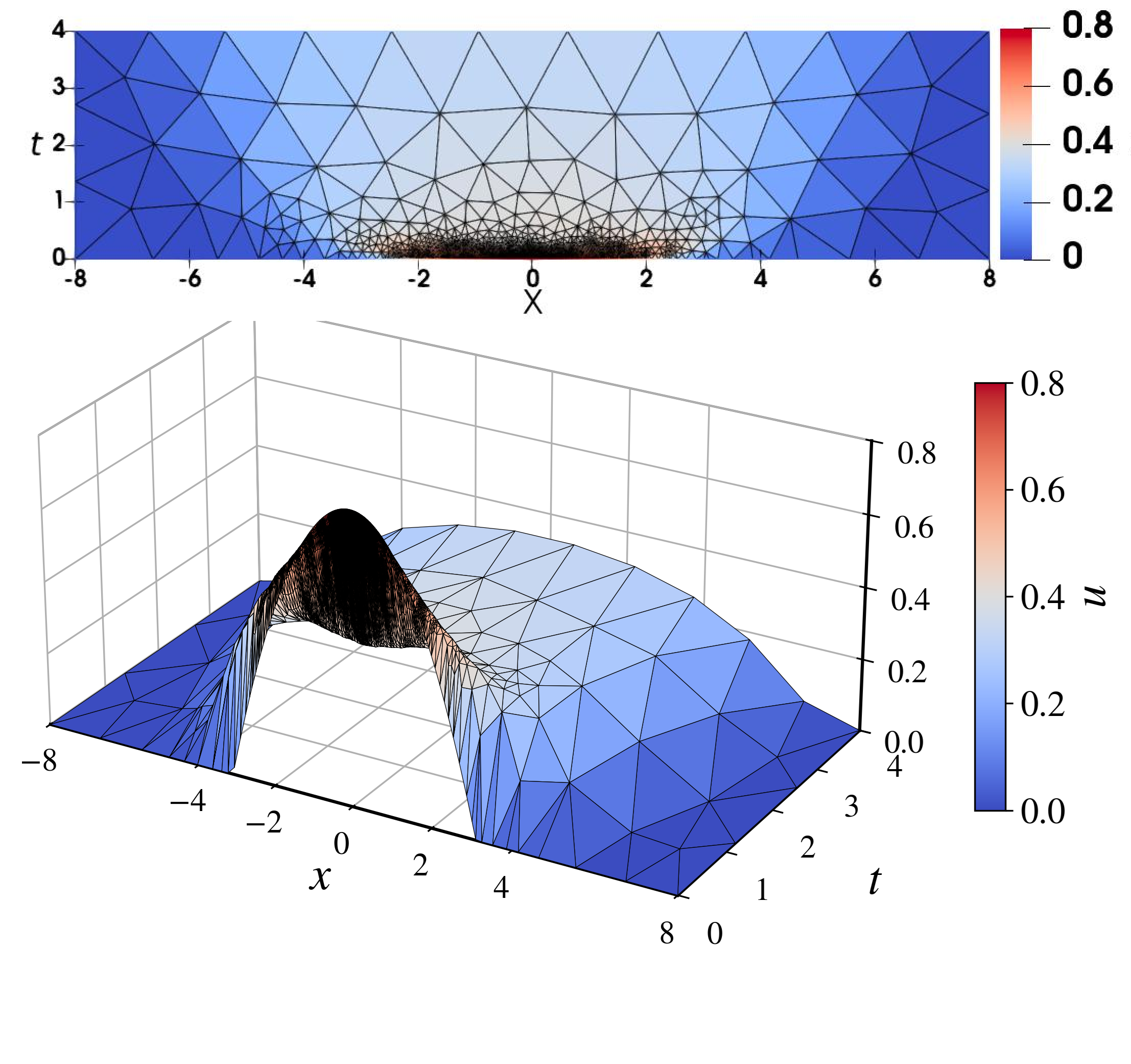}
\end{subfigure}
\hfill
\begin{subfigure}{0.49\textwidth}
\centering
\includegraphics[width=1.05\linewidth]{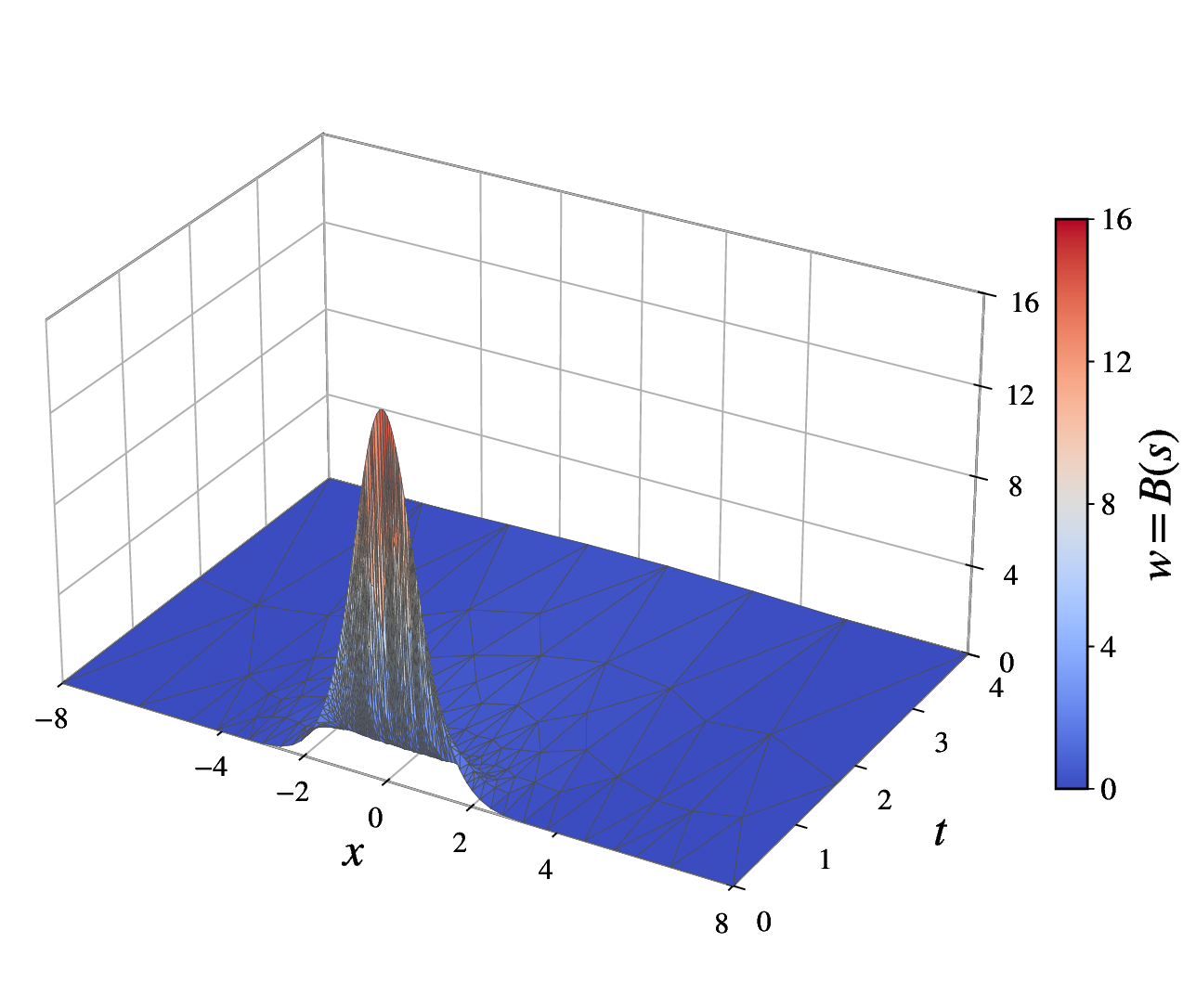}
\end{subfigure}
\caption{[\Cref{sec:Biofilm}, $d=1$, $m=2$, $\gamma=0.8$, $T=4$] 
Three-dimensional space-time profiles of the numerical solution of the 1D biofilm model on the final adapted space-time mesh. \textbf{Left:} $u = b(s)$, \textbf{Right:} $w = B(s)$.}
\label{fig:biofilm_1d_3D}
\end{figure}

\Cref{fig:biofilm_1d_3D} shows the numerical solution $(u,w)$ of the biofilm problem computed over the final adapted mesh. It is seen that most of the refinement happens close to $t=0$ due to steep $w$ gradients. The results in Figure~\ref{fig:adaptive_vs_uniform} illustrate the performance of the three strategies: fixed mesh (L), uniform refinement (Uni-L), and adaptive refinement (Adapt-L), in terms of the total error estimated by $\eta^i$ (since the exact solution is not known).
The fixed-mesh strategy shows a steady, smooth decrease in the total error throughout the computation. 
However, once the linearization converges, the total error reaches a plateau determined by the discretization error, the best approximation the fixed mesh can deliver. 
For Uni-L, the total error decreases smoothly between refinements. However, at each global refinement step (marked by a star), the solution is interpolated onto the new, finer mesh, temporarily increasing the linearization error. The iterations then converge again on the refined mesh. This produces the visible spikes in the curve at each refinement step. This is substantiated by \Cref{fig:adaptive_vs_uniform_L2norm} which shows error decay when the linearization estimator $\eta_{\mathrm{lin}}^i$ is expressed by $\|s^{i+1}-s^i\|_{L^2(Q)}$ which is less effected by the projection operation. Being devoid of the $\|\nabla (s^{i+1}-s^i)\|_{L^2(Q)}$ and $\|\p_t (s^{i+1}-s^i)\|_{L^2(Q)}$ components causes the total error to decay monotonically in \Cref{fig:adaptive_vs_uniform_L2norm}. For the Adapt-L strategy, a similar behavior is observed. Local refinements are triggered more frequently, as indicated by the circles, see \Cref{fig:adaptive_vs_uniform}. The projection operation causes the spikes at each refinement step. In terms of accuracy for a given computational cost, the Uni-L and Adapt-L strategies are quite similar for this test case.

\begin{figure}[h]
    \centering
    \includegraphics[width=0.48\textwidth]{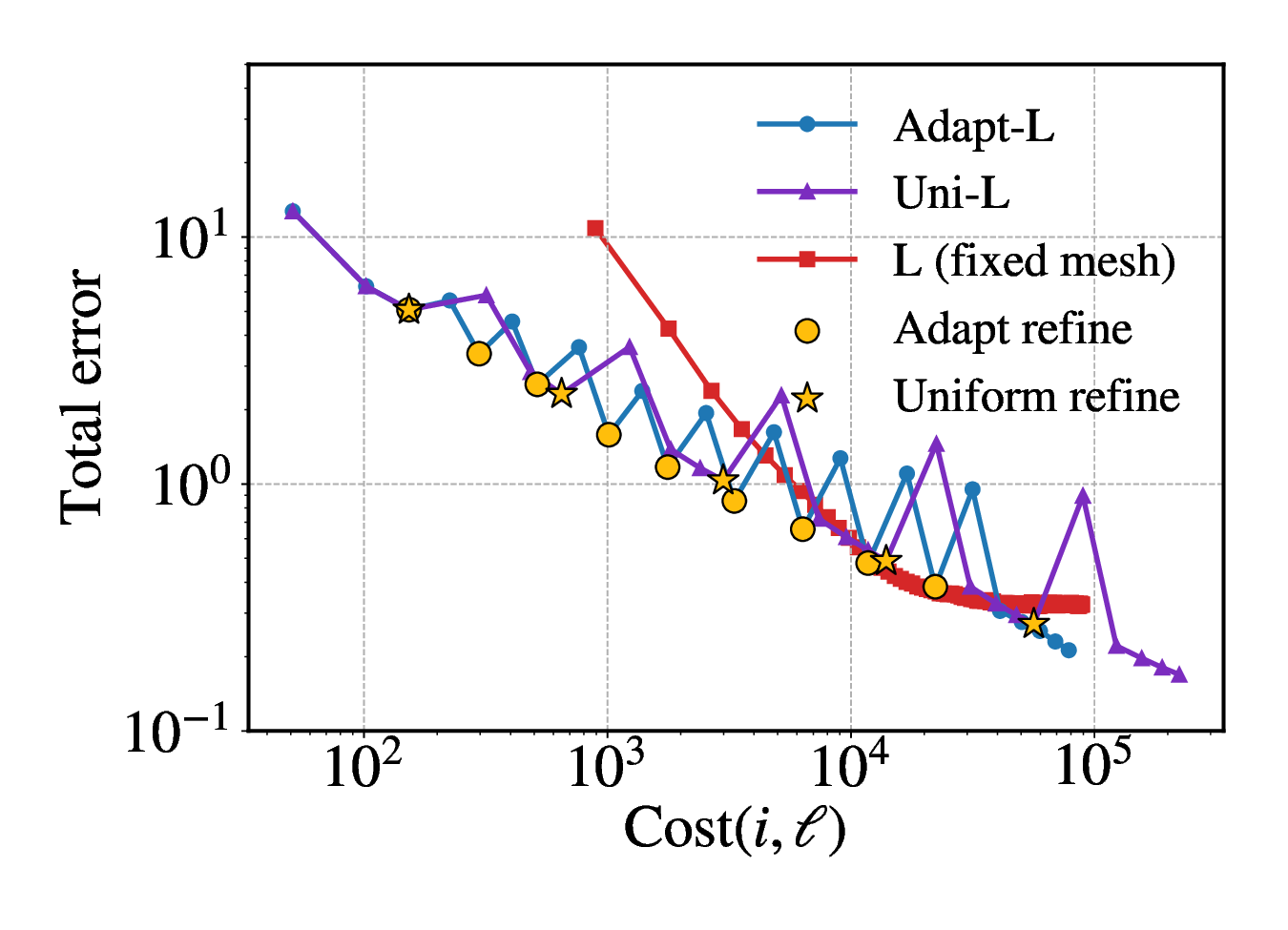}
    \hfill
    \includegraphics[width=0.48\textwidth]{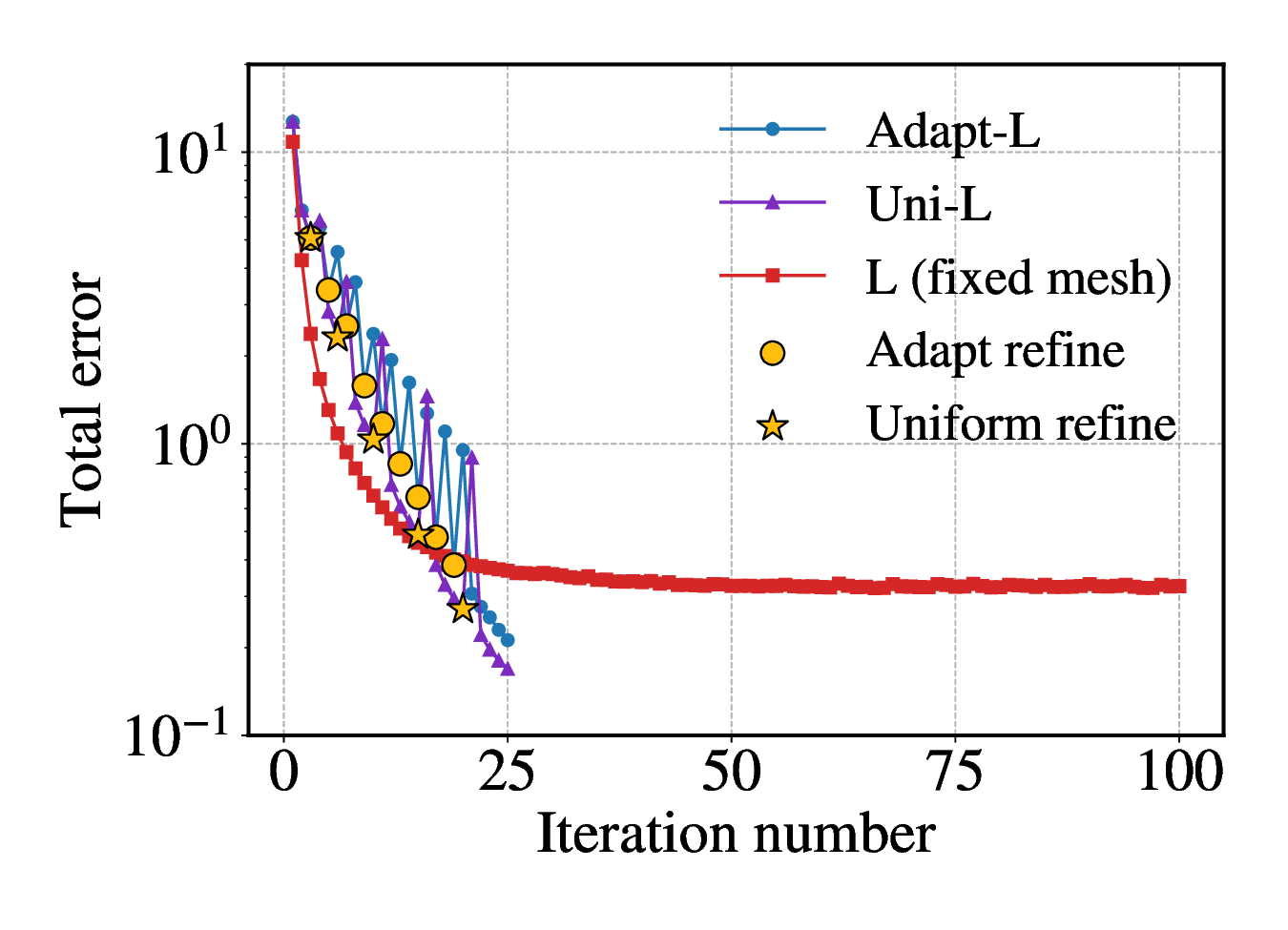}
\caption{[\Cref{tab:mesh_params_all,sec:Biofilm}, $d=1$] 
Comparison of total error estimator $\eta^i = \eta^i_{\mathrm{disc}} + L\,\eta^i_{\mathrm{lin}}$ 
for fixed, uniform, and adaptive $L$-schemes. 
\textbf{Left:} $\eta^i$ versus $\mathrm{Cost}(i,\ell)$. 
\textbf{Right:} $\eta^i$ versus $\mathrm{IterNo}(i,\ell)$. }
\label{fig:adaptive_vs_uniform}
\end{figure}



\Cref{fig:adaptive_vs_uniform_Mschemes} shows the error decay for the $N_{\mathrm{reg}}$-schemes. A similar error level is reached as compared to the $L$-schemes, both for the adaptive and uniform strategies. More oscillations are present, similar to \Cref{Figure:1DPME_Newton_m=2_dof}. The spikes are seen again at each refinement step, which get removed if the linearization estimator does not use $L^2$-norms of derivatives. \Cref{fig:adaptive_Lschemes_Biofilm_2D} shows how L-schemes compare when the same experiment is run in two space dimensions ($d=2$). The Adapt-L scheme significantly outperforms the Uni-L scheme since extremely fine resolution is required to capture the very high error close to $t=0$. In fact, in this case, even the fixed mesh scheme shows better performance.

\begin{figure}[h]
    \centering
    \includegraphics[width=0.48\textwidth]{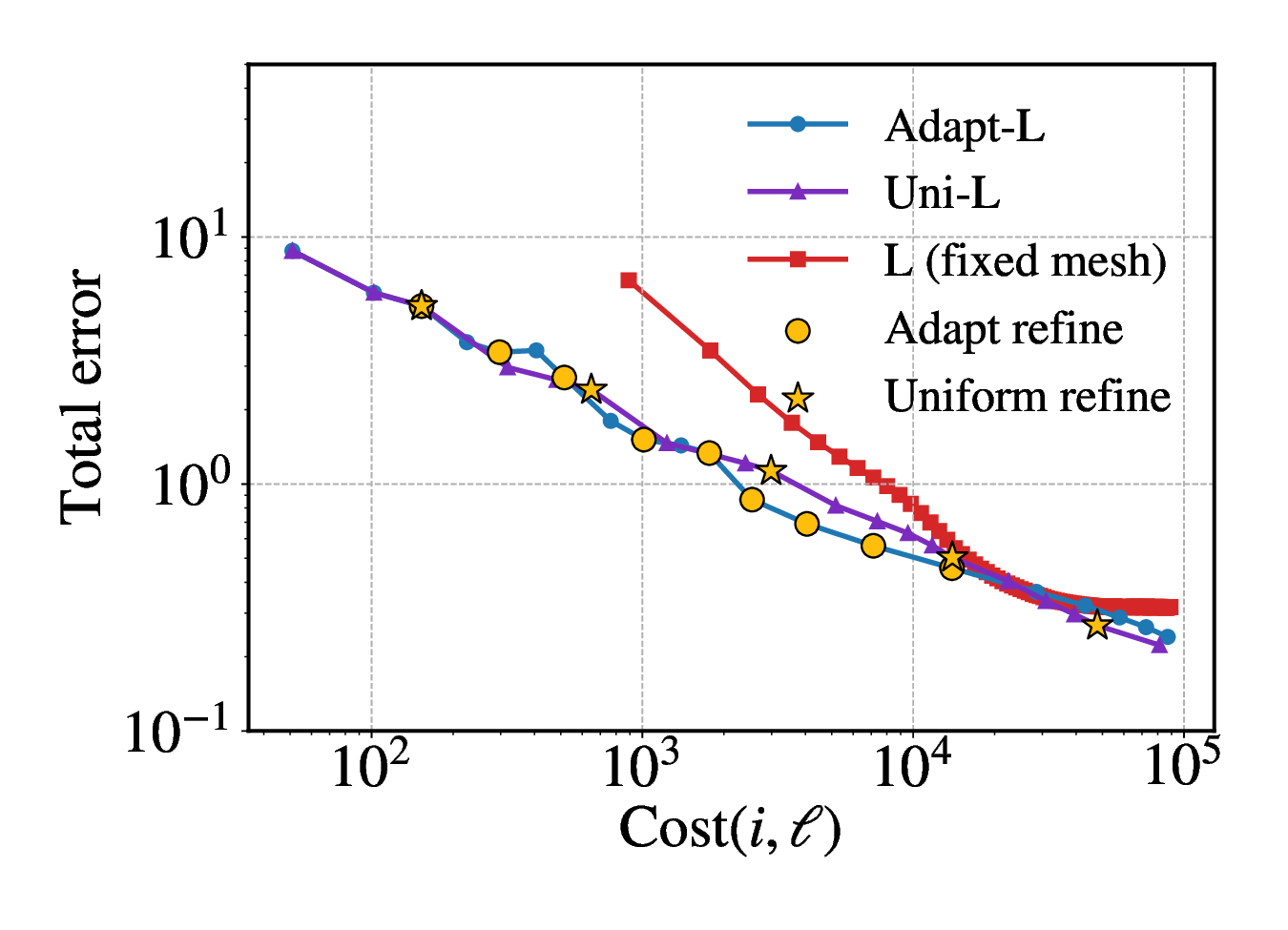}
    \hfill
    \includegraphics[width=0.48\textwidth]{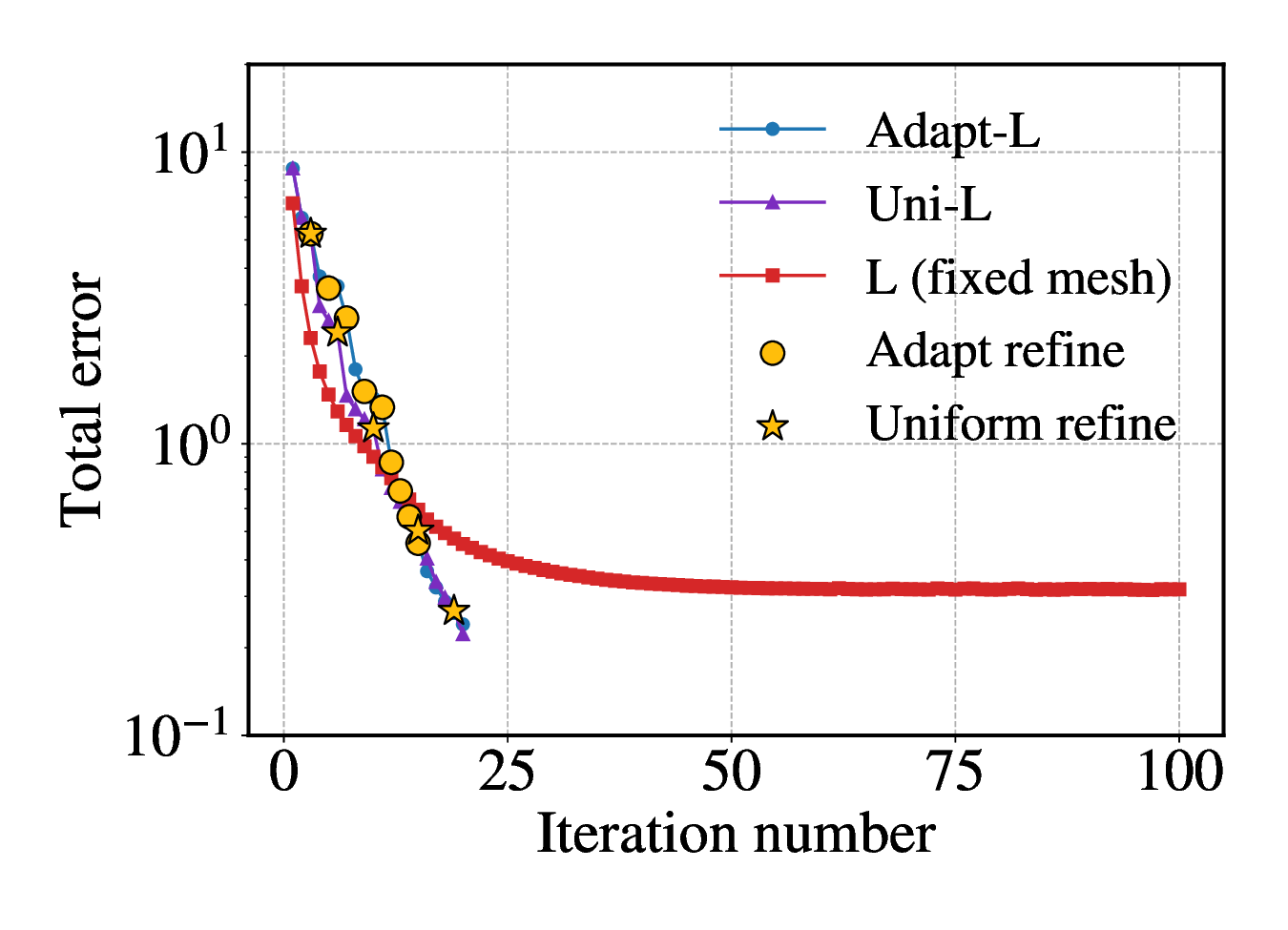}
\caption{[\Cref{tab:mesh_params_all,sec:Biofilm}, $d=1$] 
Comparison of total error estimator $\eta^i = \eta^i_{\mathrm{disc}} + L\,\eta^i_{\mathrm{lin}}$ 
for fixed, uniform, and adaptive $L$-schemes when linearization error is estimated by $\left\|s_h^{(i)} - s_h^{(i-1)}\right\|_{L^2(Q)}$. 
\textbf{Left:} $\eta^i$ versus $\mathrm{Cost}(i,\ell)$. 
\textbf{Right:} $\eta^i$ versus $\mathrm{IterNo}(i,\ell)$.}
\label{fig:adaptive_vs_uniform_L2norm}
\end{figure}

\begin{figure}[h]
\centering
\includegraphics[width=0.49\textwidth]{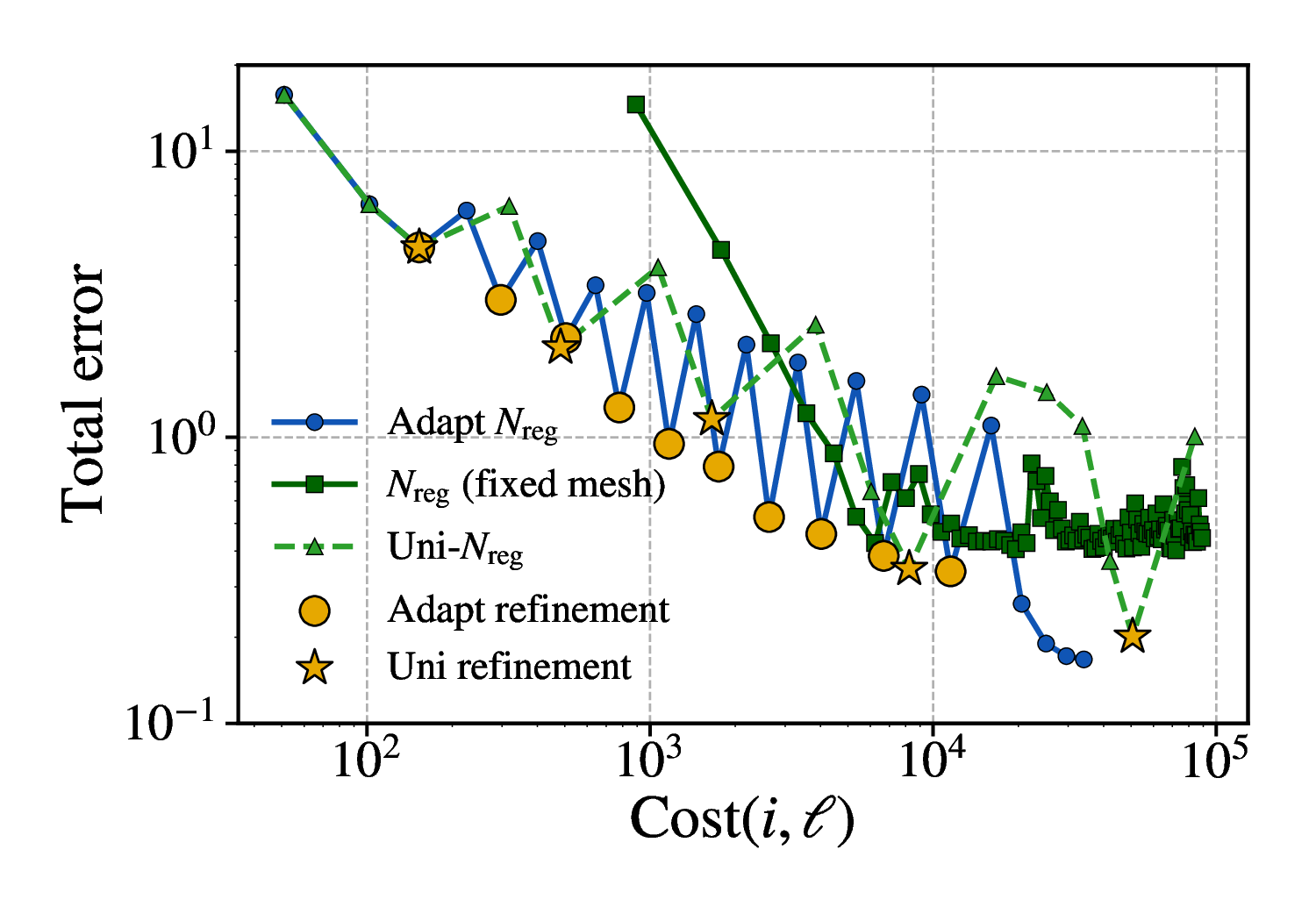}
\hfill
\includegraphics[width=0.49\textwidth]{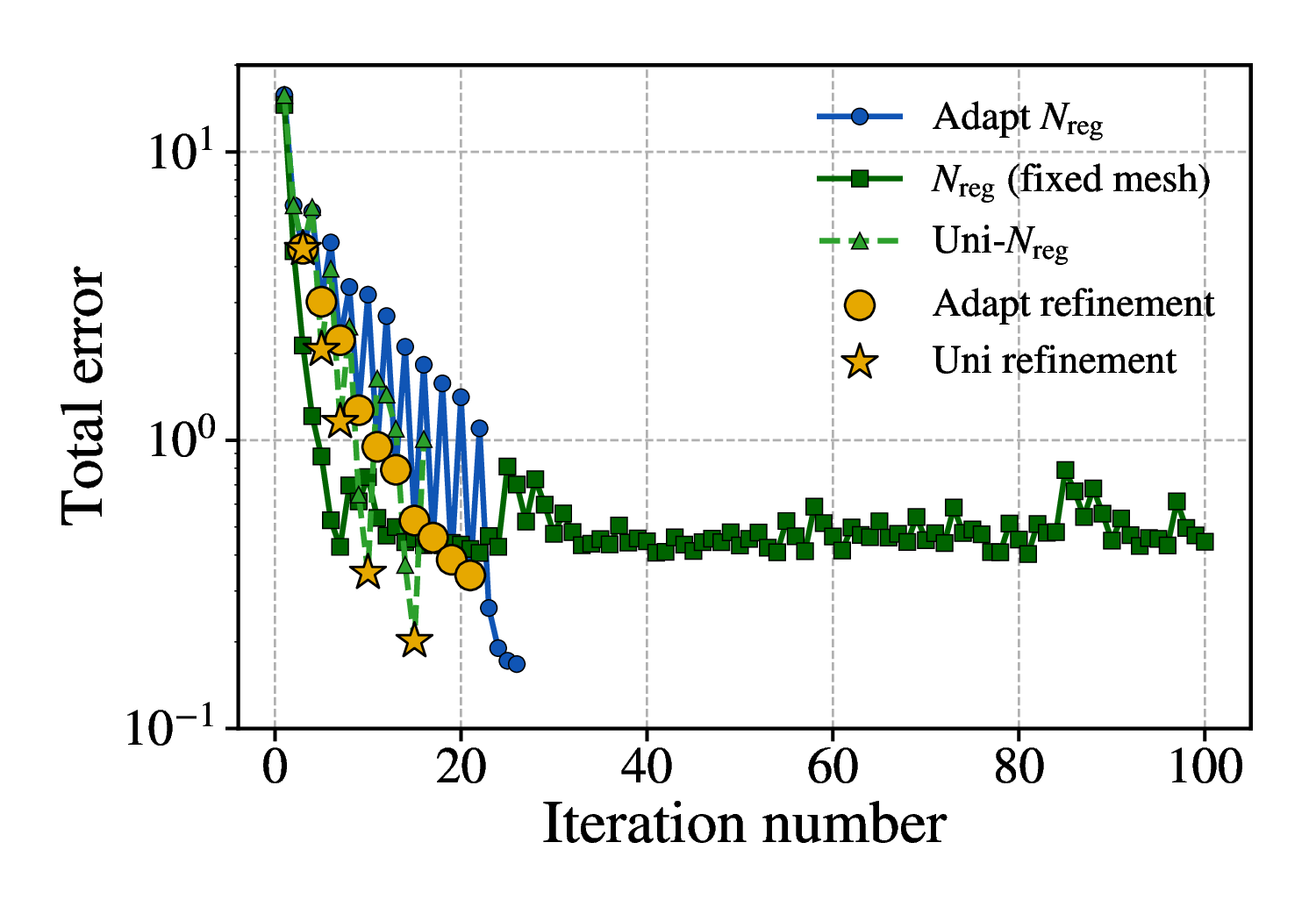}
\caption{
[\Cref{tab:mesh_params_all,sec:Biofilm}, $d=1$] 
Comparison of total error estimator $\eta^i = \eta^i_{\mathrm{disc}} + L\,\eta^i_{\mathrm{lin}}$ 
for fixed, uniform, and adaptive $N_{\mathrm{reg}}$-schemes. 
\textbf{Left:} $\eta^i$ versus $\mathrm{Cost}(i,\ell)$. 
\textbf{Right:} $\eta^i$ versus $\mathrm{IterNo}(i,\ell)$.}
\label{fig:adaptive_vs_uniform_Mschemes}
\end{figure}

\begin{figure}[h]
    \centering
    \begin{subfigure}[b]{0.5\textwidth}
        \includegraphics[width=1.01\textwidth]
{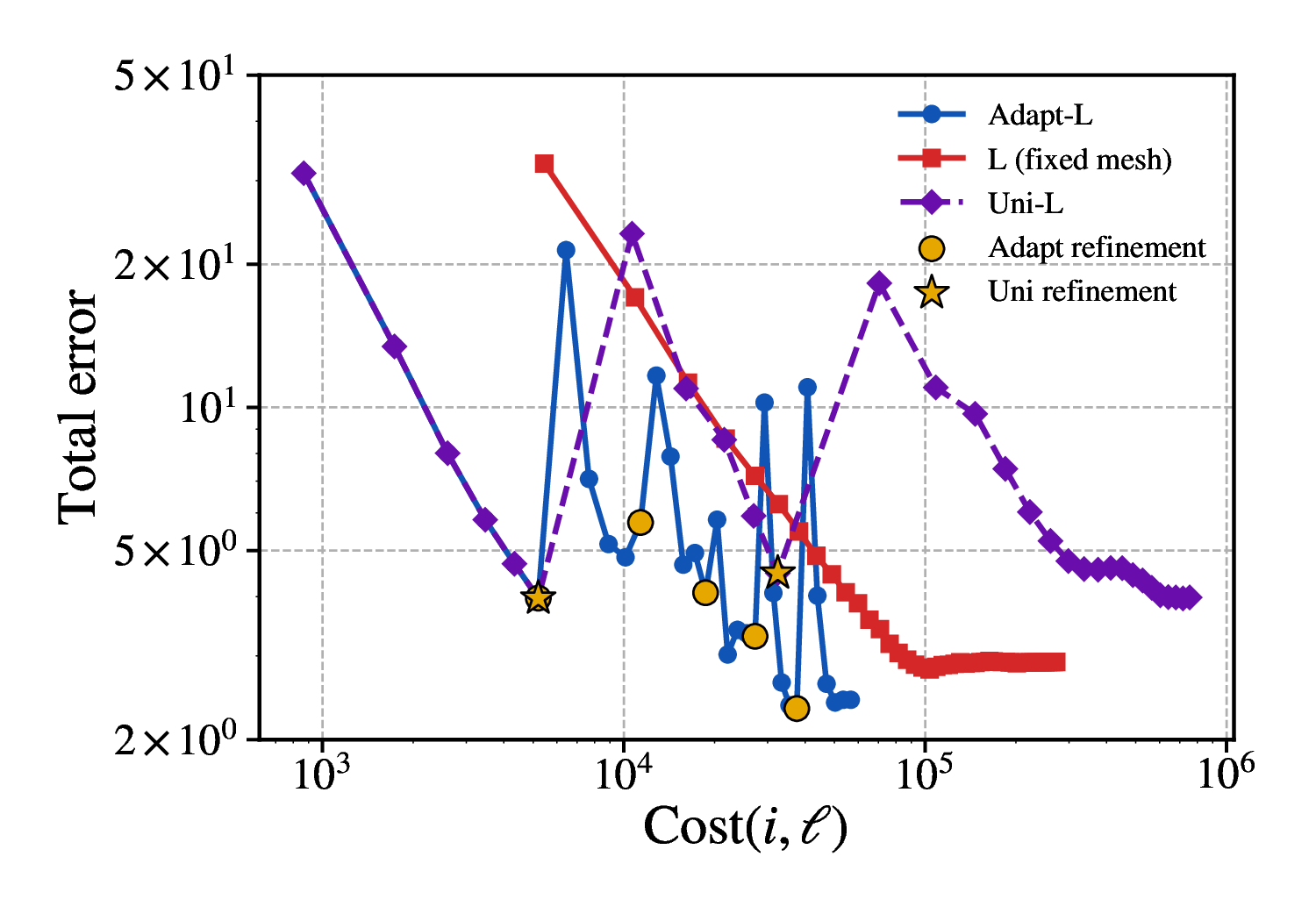}
    \end{subfigure}
    \hfill
    \begin{subfigure}[b]{0.49\textwidth}
        \includegraphics[width=1.01\textwidth]
{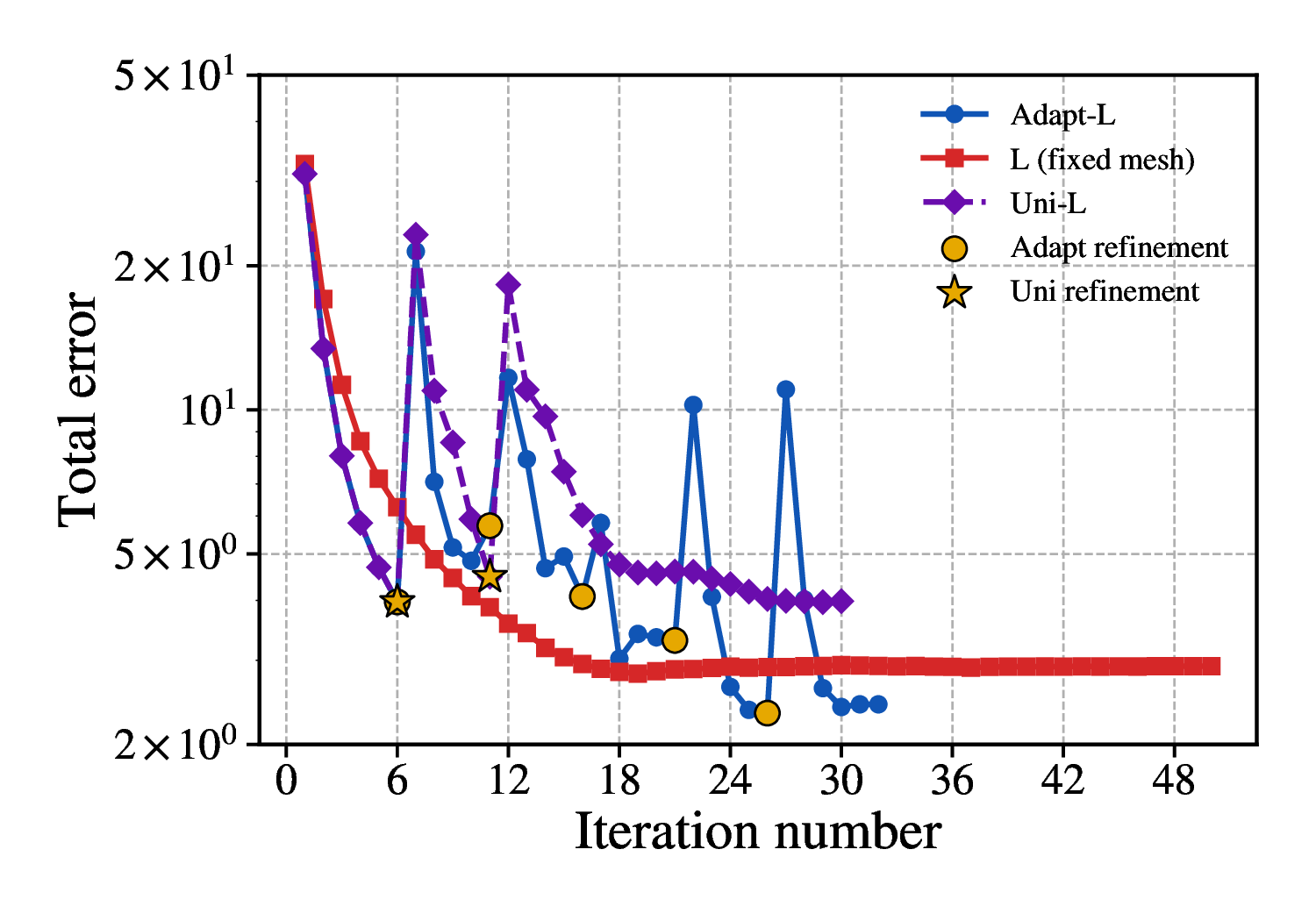}

\end{subfigure}
\centering
\caption{[\Cref{tab:mesh_params_all,sec:Biofilm}, $d=2$] Comparison of total error estimator $\eta^i = \eta^i_{\mathrm{disc}} + L\,\eta^i_{\mathrm{lin}}$ 
for fixed, uniform, and adaptive $L$-schemes. 
\textbf{Left:} $\eta^i$ versus $\mathrm{Cost}(i,\ell)$. 
\textbf{Right:} $\eta^i$ versus $\mathrm{IterNo}(i,\ell)$.}
\label{fig:adaptive_Lschemes_Biofilm_2D}
\end{figure}


\subsection{A double degenerate toy-model}\label{sec:toy}
Finally, we investigate a double degenerate toy-model from \cite{javed2025robust} where $\Phi$ becomes multivalue at $\vs=1$, and there is a degenerate elliptic region (fast-diffusion)  $\{u=\om\}$ of non-zero measure:
\begin{align}\label{eq:toy}
\frac{\partial u}{\partial t} = \Delta w+ \frac{1}{2}\, u , \quad  w \in \Phi( u) \quad \text{ where } \quad \Phi(u) =
\begin{cases}
0 & \text{if } u \leq 0, \\
1-\sqrt{1-u^2} & \text{if } 0 \leq u < 1,\\
 [1, \infty] & \text{if } u = 1.
\end{cases}
\end{align}
The initial condition \eqref{eq:Barenblatt} with $m=2$ and$\g=1.0$ as in the PME case.

For this problem, using \eqref{eq:bBexpression}, the functions  $b,\, B$ can be expressed explicitly. For $u^* = \frac{1}{\sqrt{2}}$,
\[
b(s)=
\begin{cases}
s, & s \le u^*,\\[4pt]
\sqrt{1 - (\sqrt{2} - s)^2}, & u^* < s < \sqrt{2},\\[4pt]
1, &  \text{otherwise},
\end{cases}
\qquad
B(s)=
\begin{cases}
0, & s < 0,\\[4pt]
1 - \sqrt{1 - s^2}, & 0 \le s \le u^*,\\[4pt]
s + 1 - \sqrt{2}, &  \text{otherwise}.
\end{cases}
\]
\Cref{fig:sol_toy_model} shows the computed solution of the toy-model after 8 levels of mesh-adaptation for the one-dimensional space case. The degenerate region $\{u=\om\}$ appears for $t>0$ and grows due to the source term. Mesh adaptation is focused on the free-boundaries and domain boundaries after it intersects the free-boundaries. Unlike previous cases, refinement happens more for later time-points since the gradients near the free-boundaries become steeper. 

\Cref{fig:adaptive_L_toy_1D} shows the decay of the estimator $\eta^i$ for the different refinement strategies of the L-scheme. Both uniform and adaptive strategies work well with a steady decay of error except for spikes when the refinement happens, due to the projection operation. In general, lower error levels are reached with fewer DoFs with the refinement strategies, with Adapt-L strategy performing marginally better, even in one space dimension.

\begin{figure}[h]
\centering

\hfill
  \begin{subfigure}[b]{0.49\textwidth}
        \centering
        \includegraphics[width=\textwidth]{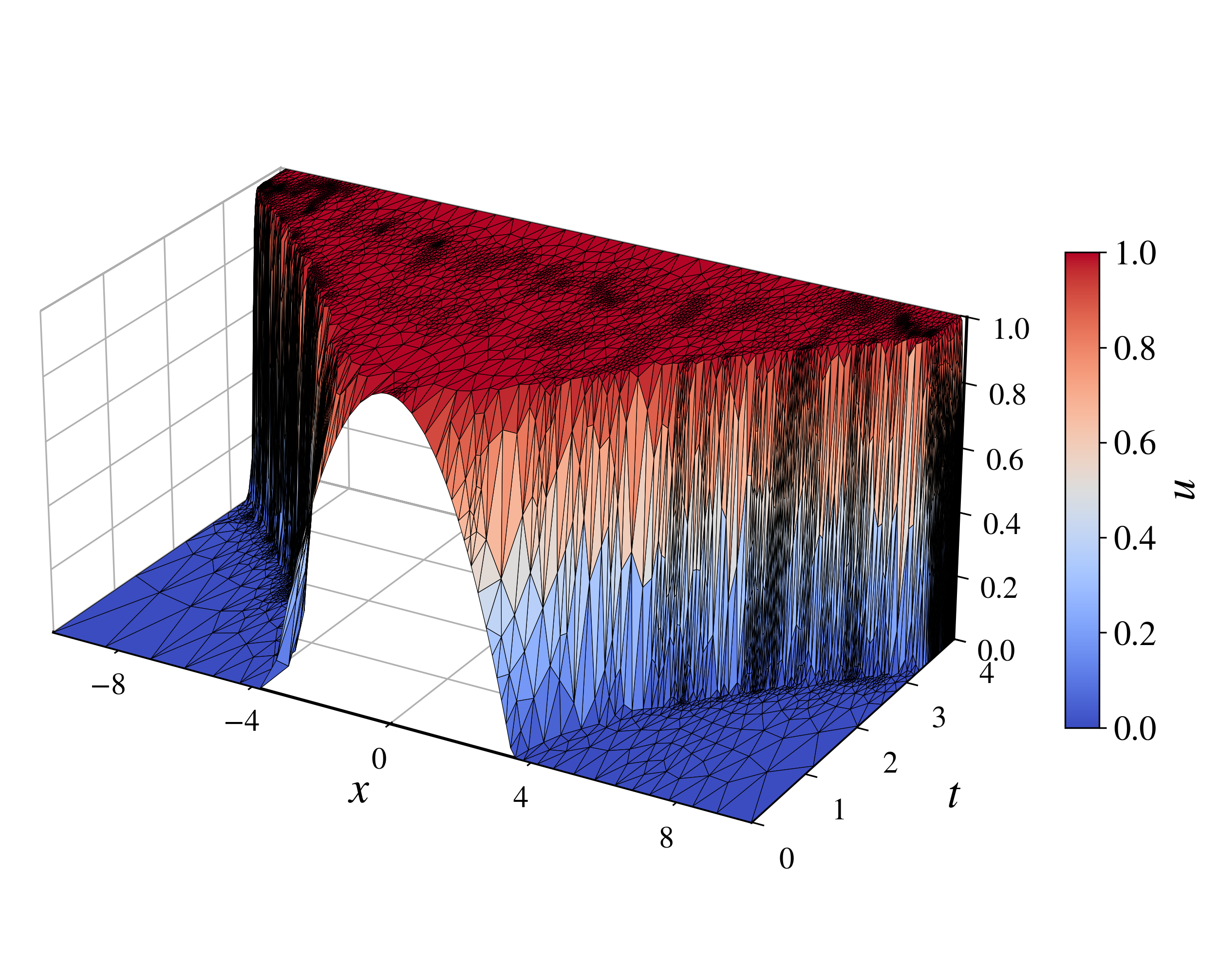}
        \label{fig:toy_u_t_4}
        \end{subfigure}       
        \hfill
        \begin{subfigure}{0.49\textwidth}
  \centering
  \includegraphics[width=\linewidth]{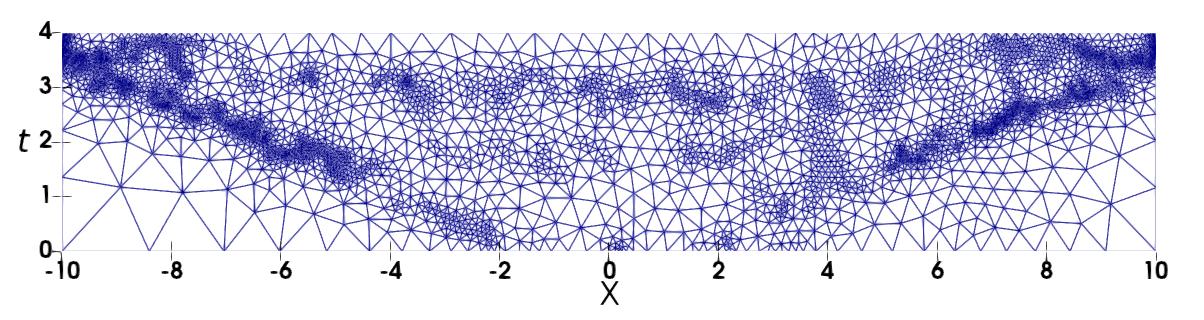}
  \vspace{4em}
\end{subfigure}
\caption{[\Cref{sec:toy}, $d=1$, $m=2$, $\gamma=1.0$, $T=4$] \textbf{Left:} The numerical solution $u = b(s)$ of the toy-model over the final adapted mesh. \textbf{Left:} Final adapted space-time mesh.}\label{fig:sol_toy_model}
\end{figure}

\begin{figure}[h]
\centering
\includegraphics[width=0.49\textwidth]
{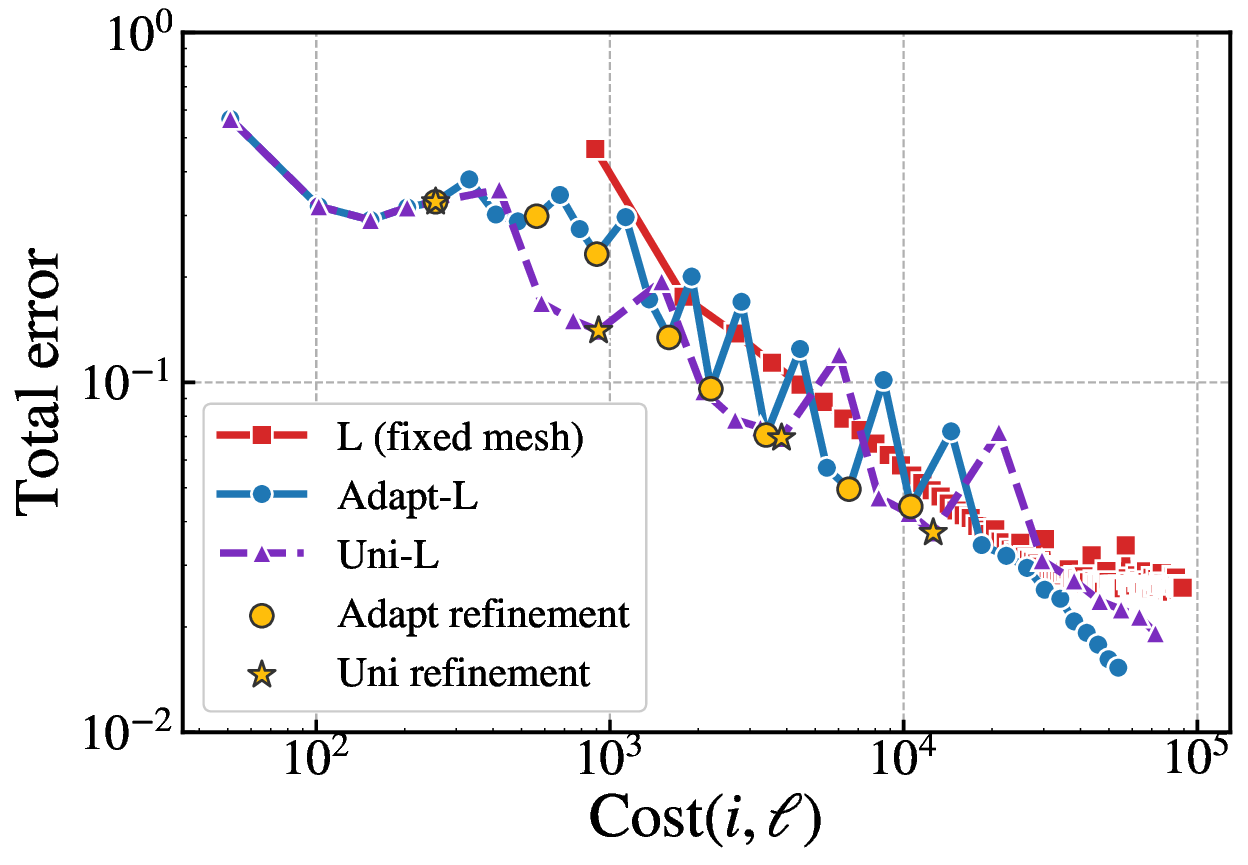}
\hfill
\includegraphics[width=0.49\textwidth]
{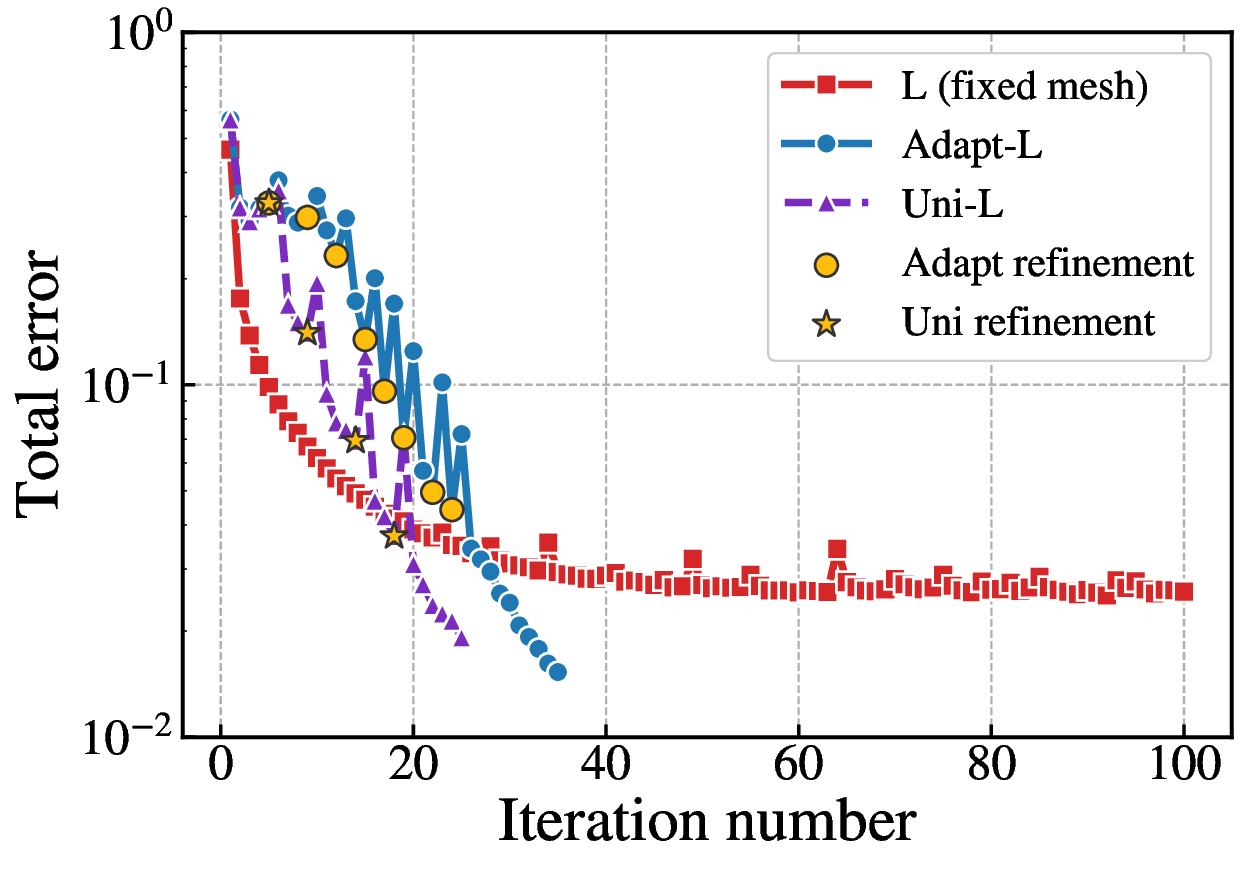}
\caption{[\Cref{tab:mesh_params_all,sec:toy}, $d=1$] Comparison of total error estimator $\eta^i = \eta^i_{\mathrm{disc}} + L\,\eta^i_{\mathrm{lin}}$ 
for fixed, uniform, and adaptive $L$-schemes. 
\textbf{Left:} $\eta^i$ versus $\mathrm{Cost}(i,\ell)$. 
\textbf{Right:} $\eta^i$ versus $\mathrm{IterNo}(i,\ell)$.}
\label{fig:adaptive_L_toy_1D}
\end{figure}

\section{Conclusion}\label{sec:conclusion}
In this work, we developed and analyzed an adaptive space-time framework for a class of nonlinear and degenerate parabolic equations. The approach is motivated by the necessity for local time-step refinement to resolve degeneracies in both the time derivative and the 
diffusion operator.

We employed a reformulation of the problem, based on \cite{javed2025robust}, that splits the complex nonlinearity into two simpler nonlinearities. A iterative linearization scheme (L-scheme) was then proposed to solve the nonlinear problem which requires solving only heat equations. It was then proven that this scheme is well-posed and unconditionally convergent irrespective of the initial guess, and even in double-degenerate regimes. The convergence is linear in $L^2$ if the problem is non-degenerate. The convergence properties were confirmed numerically, with linear decay of the linearization error 
observed in the non-degenerate and single degenerate cases. The linearization further allows the total error corresponding to an approximate solution, measured through the dual norm of the residual, to be decomposed into two components: linearization  and discretization errors.  The linearization error is measured by the difference of consecutive iterates, whereas, the discretization error corresponds to the iterative step which is simply the heat equation. 
Evaluation of these error components are performed only on fixed norms, independent of mesh, and free from any dependence on the nonlinear function. This mirrors the orthogonal decomposition of error  that was proven in \cite{mitra2023guaranteed} for elliptic problems.

Next, we considered a space-time conforming finite element method to  discretize the L-scheme, providing us with approximate numerical solutions.
A central contribution of this work is the development of guaranteed, fully- and parallely-computable a posteriori error estimators based on locally equilibrated flux reconstructions on space-time vertex patches. The quality of the estimators are independent of the nonlinearities. Moreover, they are straightforward to compute for both the L-scheme, and a modified Newton scheme that was considered for comparison. Numerical experiments show that they robustly capture the error distribution even for degenerate problems. 

Based on the a posteriori estimator, a fully adaptive solution strategy was considered where the iterations are continued until the linearization error becomes smaller than a fraction of the discretization error. Upon meeting this criteria, a refinement step is performed, based on the distribution of the discretization estimator. The effectiveness of the adaptive strategy was demonstrated through numerical experiments for the porous medium equation, the biofilm growth model, and a double degenerate toy-model. The refinements automatically concentrate along propagating free boundaries and regions of fast dynamics. The adaptive approach achieves the same overall accuracy with lower total computational costs compared to uniform refinements and fixed meshes. The adaptive scheme particularly shines in higher-dimensional spaces and for harsher nonlinearities.

\textbf{Acknowledgement} AJ acknowledge the financial support of the  HEC grant: 1(2)/HRD/OSS-III/BATCH-3/2022/HEC/384. The work of ISP was supported by the Research Foundation - Flanders (FWO), project G0A9A25N and the German Research Foundation (DFG) through the SFB 1313, project number 327154368.


\end{document}